\newtheorem{lemma}{Lemma}
\newtheorem{proposition}{Proposition}
\newtheorem{proof}{Proof}
\newtheorem{remark}{Remark}
\begin{document}

\begin{frontmatter}



\title{Local Fr\'echet  functional regression in manifolds from time-correlated bivariate curve data} 


\author{M.D. Ruiz-Medina$^{1}$ and A. Torres--Signes$^{2}$} 

\affiliation{organization={$^1$University of Granada\\
$^2$University of M\'alaga},
           country={Spain}}

\begin{abstract}
Under mild conditions,  this paper derives  a least-squares local linear Fr\'echet curve predictor for response and regressor evaluated in a separable Hilbert space.  We obtain the conditions allowing the implementation of this local linear Fr\'echet functional predictor in the ambient $L^{2}$-space of vector functions, with values in the time-varying tangent space on a compact Riemannian manifold.
An intrinsic local linear Fr\'echet curve predictor evaluated in such a   manifold is secondly proposed,   based on a weighted Fr\'echet mean approach.  Its asymptotical   optimality is proved.  The simulation study and real-data application analyze the finite-sample performance of the empirical versions of both predictors, compared with  a  geodesic  Nadaraya-Watson-type  curve predictor. In the real-data application,  the functional prediction of   the  time-varying  spherical coordinates of the Earth's magnetic  field is addressed,  from the observation of the geocentric latitude and longitude of the satellite NASA's MAGSAT spacecraft.
\end{abstract}

\begin{graphicalabstract}
\begin{figure}[!h]
\begin{center}
\includegraphics[width=0.28\textwidth]{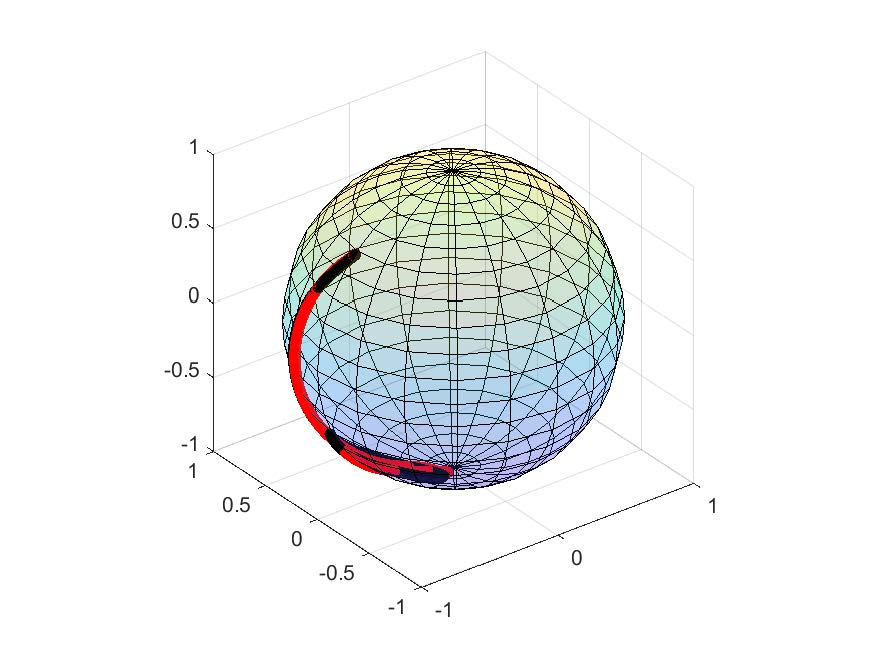}
\includegraphics[width=0.28\textwidth]{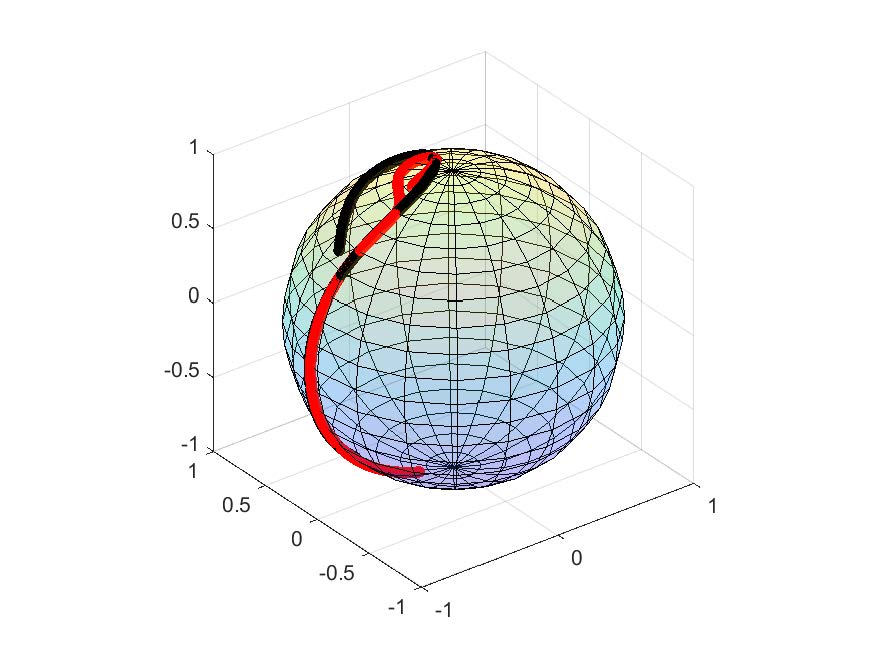}
\includegraphics[width=0.28\textwidth]{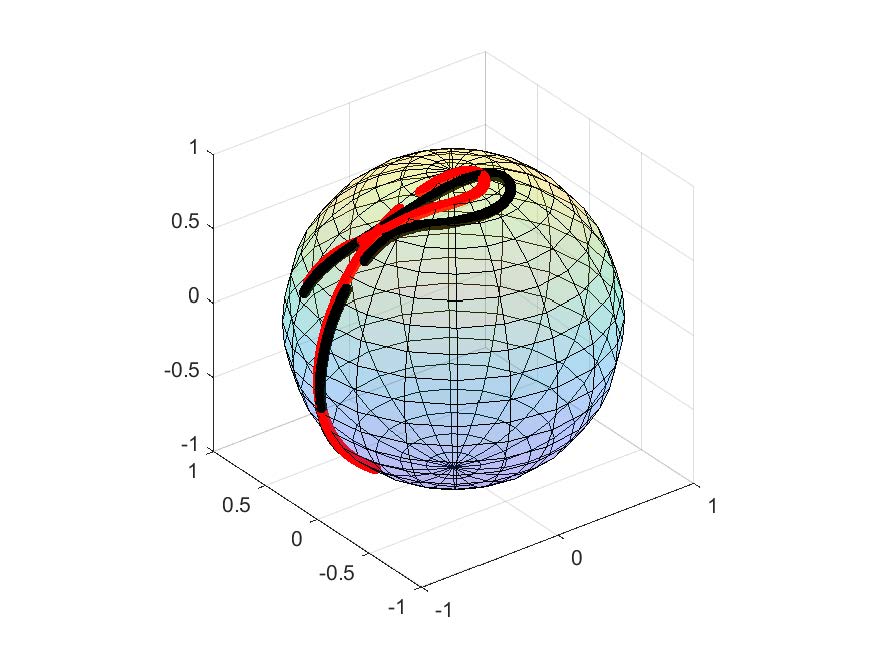}
\includegraphics[width=0.28\textwidth]{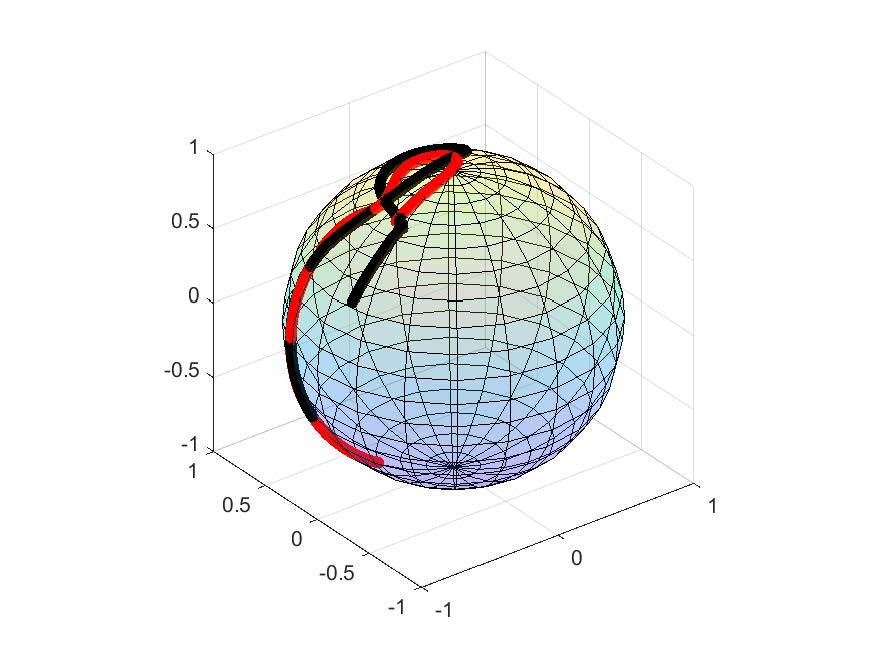}
\includegraphics[width=0.28\textwidth]{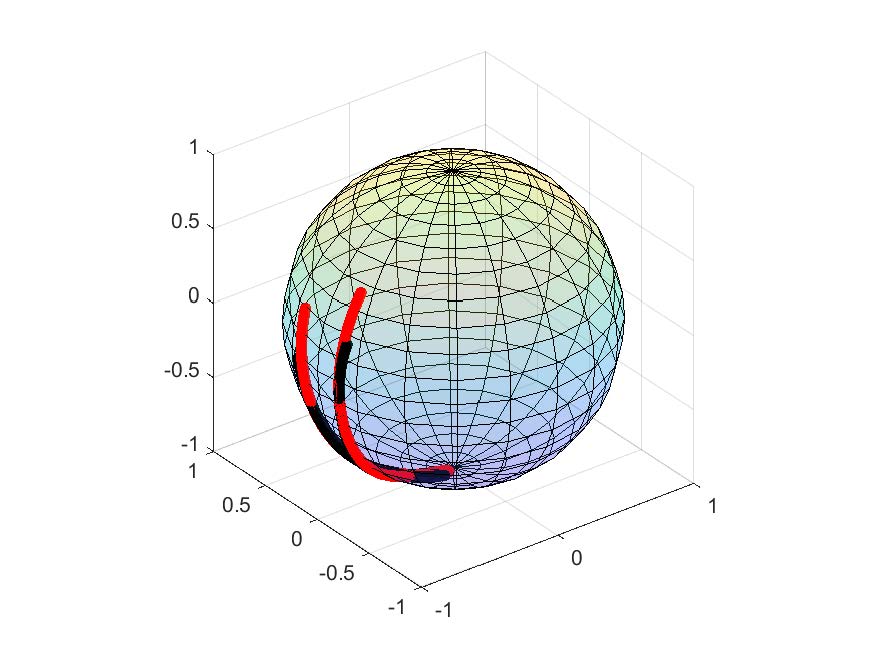}
\includegraphics[width=0.28\textwidth]{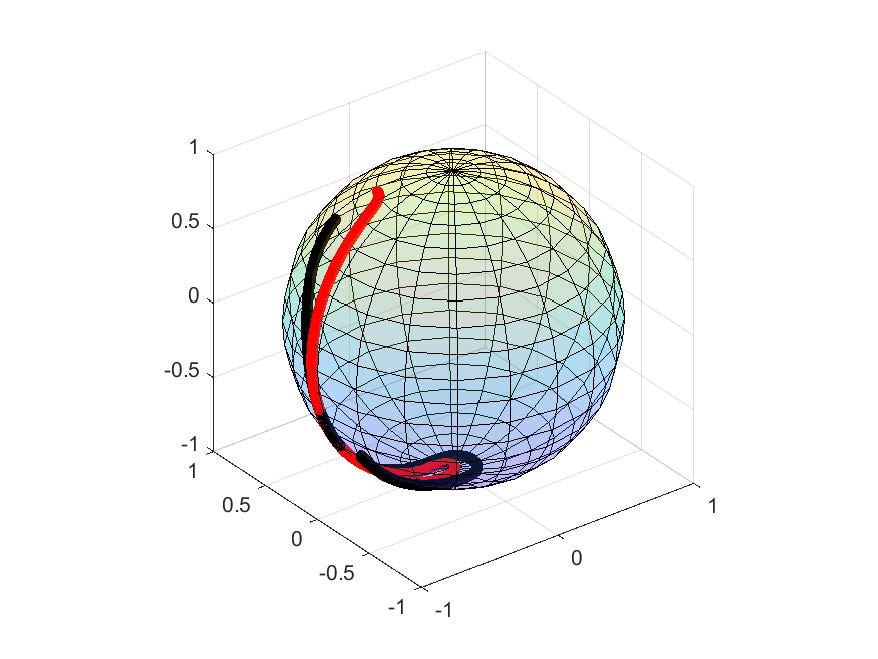}
\includegraphics[width=0.28\textwidth]{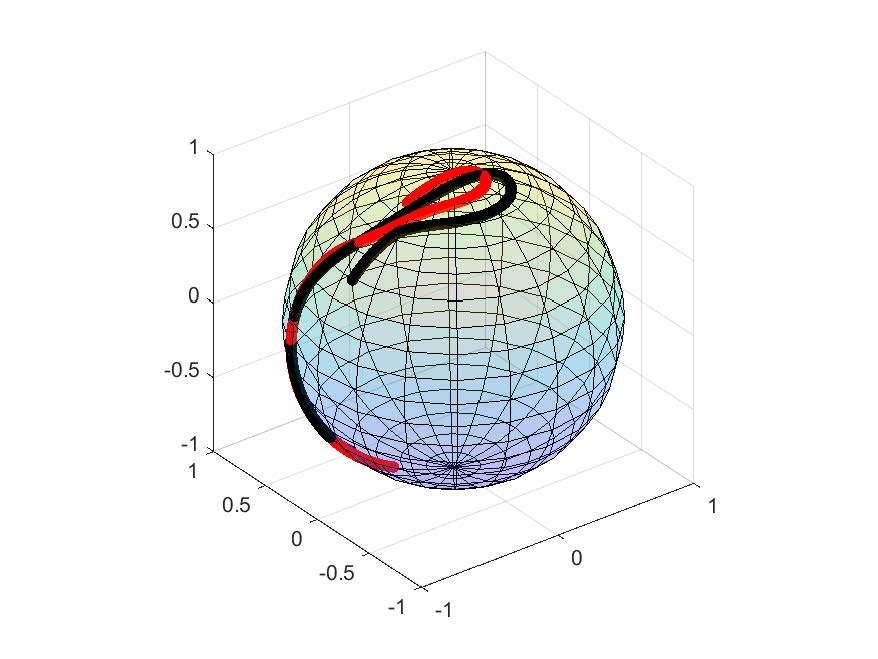}
\includegraphics[width=0.28\textwidth]{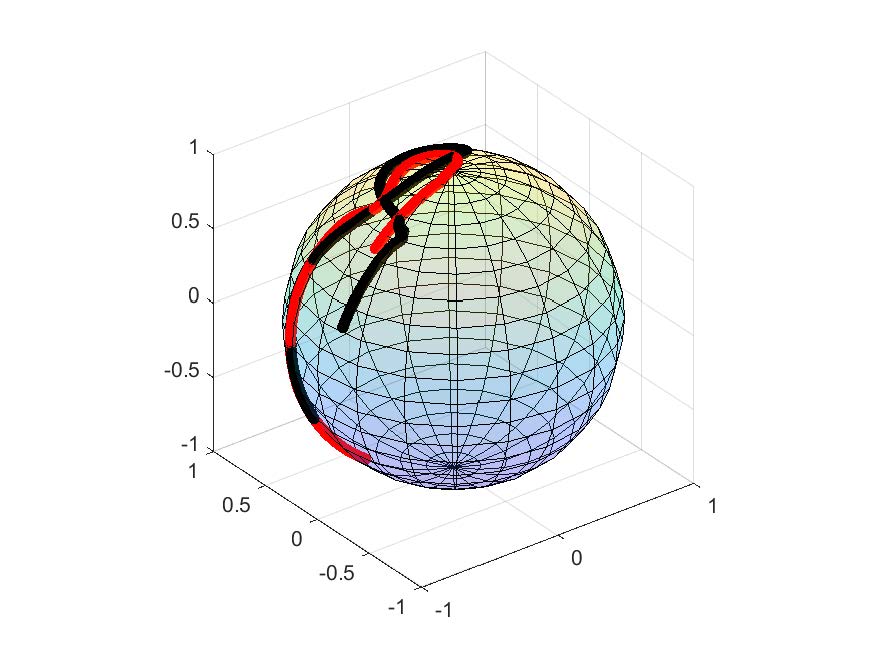}
\includegraphics[width=0.28\textwidth]{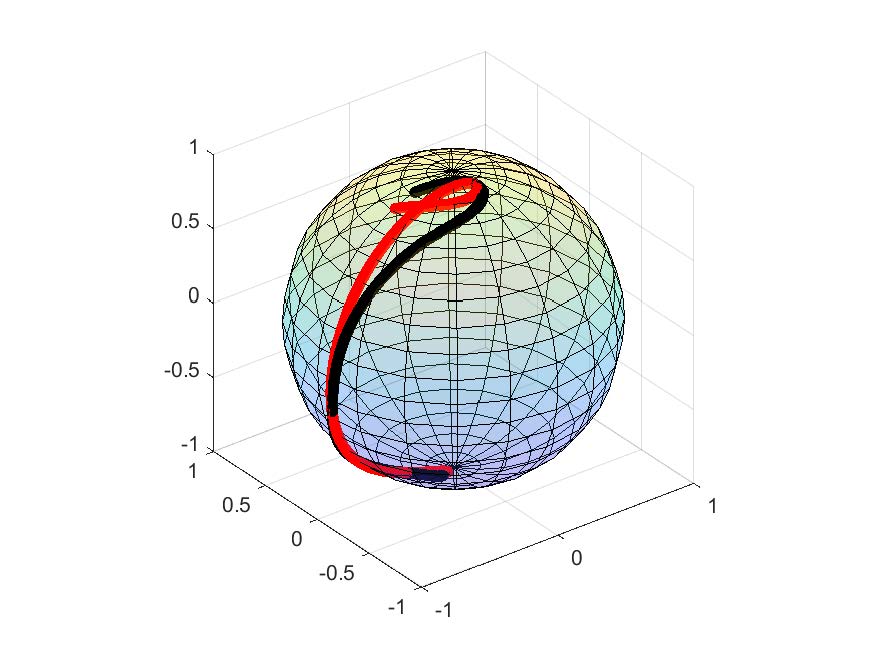}
\includegraphics[width=0.28\textwidth]{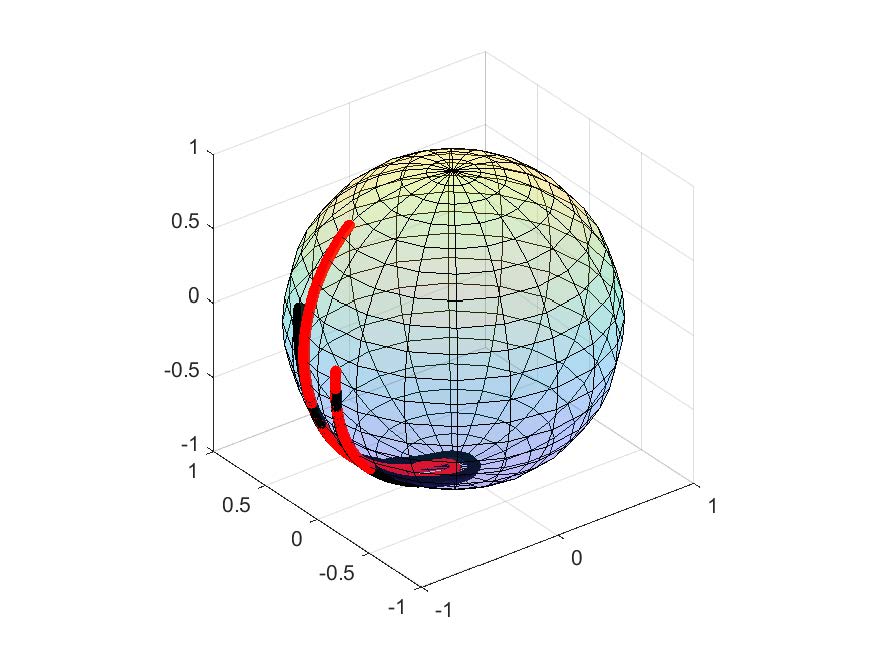}
\includegraphics[width=0.28\textwidth]{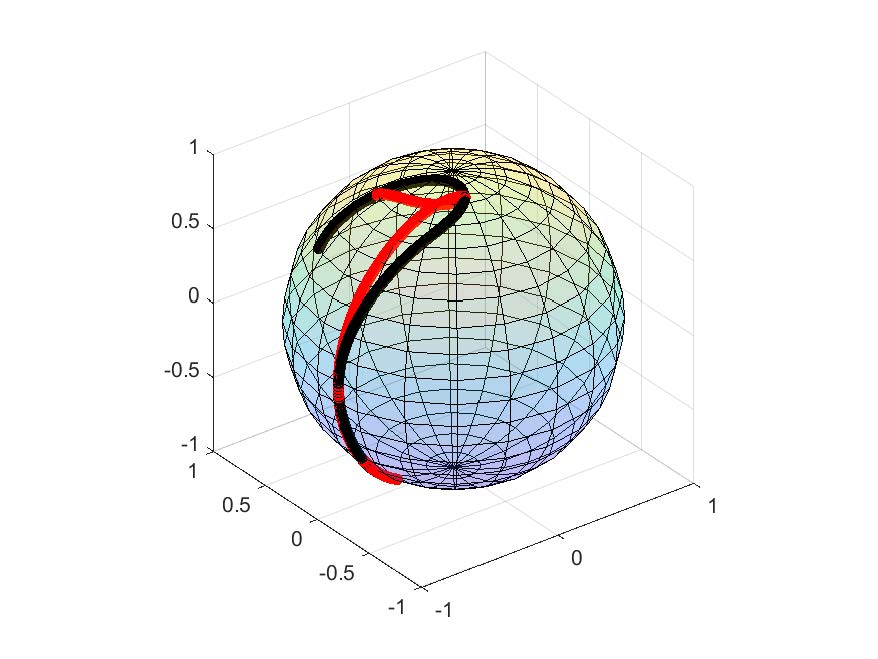}
\includegraphics[width=0.28\textwidth]{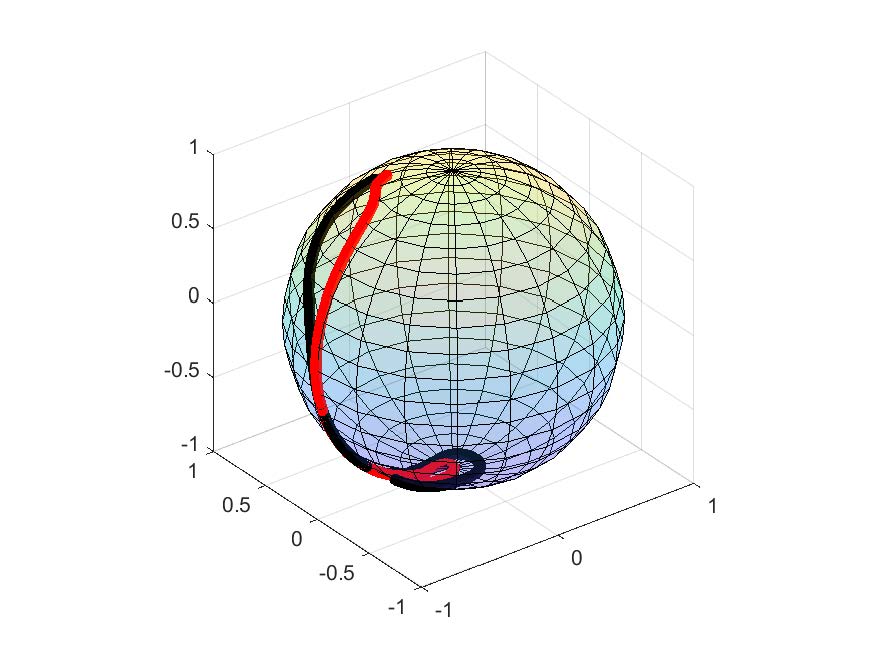}
\end{center}
\caption{Time--varying geocentric  latitude and longitude of the satellite NASA's MAGSAT  spacecraft (black curve), and the time--varying  spherical coordinates of Earth's magnetic vector  field (red curve), November, 1979 (two top-lines), and January, 1980  (two-bottom lines)}
\end{figure}
\end{graphicalabstract}

\begin{highlights}
\item Research highlight 1. Infinite-dimensional formulation of local linear Fr\'echet regression  when the response and regressor take their values in a separable Hilbert space
\item Research highlight 2. Infinite-dimensional formulation of extrinsic and intrinsic  local linear Fr\'echet regression for time-varying response and regressor evaluated in a compact  Riemannian manifold from time correlated bivariate curve data
\end{highlights}

\begin{keyword}
Ambient Hilbert space \sep Compact Riemannian manifolds \sep Conditional  Fr\'echet mean \sep Local geodesic curve regression\sep Time-correlated bivariate curve data in manifolds \sep Time-varying tangent space



\end{keyword}

\end{frontmatter}



\section{Introduction}
The understanding of the interaction between the orientation of the solar magnetic field and the Earth's constitutes a central topic in modern economy. Measuring and managing the effects of the orientation  of the Earth's vector magnetic field allows to prevent   catastrophic losses  due to solar storms, affecting  the efficiency and  stability of  electrical and digital infrastructures supporting modern economy.
This prevention constitutes one of the main practical motivations of this paper.
 Specifically, the  orientation  of the Earth's vector magnetic field is based on the analysis of the temporal evolution of its components in a geocentric spherical coordinate system. We address the problem of functional prediction of these  components, defining our curve response,  from the observation of the  satellite NASA's MAGSAT spacecraft trajectories,  providing our  regressor sample curve values.

Note that the  Earth's magnetic field is subject to secular variation (slow, internal changes), and external fluctuations, hence, a global model for this response often misses local temporal curve shifts. The  time-varying kernel functional regression framework considered in this paper allows the weighting of curve observations in the Earth that are closer in time more heavily, effectively capturing the slope of the field evolution. Particularly,   the extrinsic local linear Fr\'echet regression proposal in this paper, locally computed in the time-varying tangent space, allows via Fr\'echet derivative of the projected regression operator,  to capture the movement of the curve data on the time-varying tangent spaces in the Earth. Hence, one can  estimate the velocity  changes of the  Earth's magnetic field coordinates at any given time.

The   intrinsic local linear Fr\'echet curve regression secondly proposed here is adapted to the geometry of the Earth, since the involved nonlinear weights are computed from the  geodesic distance. Thus, the intrinsic approach ensures Earth's magnetic field curve  predictions stay on the surface of the Earth.  Our choice of one of the two approaches for curve prediction depends on whether we are interested on reproducing velocity curve changes  at high resolution  levels via extrinsic local linear approach, or  curve geodesic  distance changes via the intrinsic local linear approach.

Nonparametric statistics   provides a  flexible framework for functional   regression   (see \cite{Torres.21}; \cite{Torresetal2024})  in Riemannian manifolds (see, e.g.,  \cite{Bhattacharya.12}; \cite{Marzio.14}; \cite{Khardani22};  \cite{KimPark13};    \cite{Patrangenaru.16};  \cite{Pelletier05}; \cite{Pelletier06}; \cite{Linthomaszhu17};   \cite{Zhu09}).
This paper pays attention to  infinite--dimensional local linear regression in a Fr\'echet framework. In the case of  response evaluated in a metric space and Euclidean regressors, we refer to the reader to \cite{Petersen.19}.  The authors in \cite{Petersen.19} adopt a weighted Fr\'echet mean approach to approximate the solution to the global  and local linear Fr\'echet regression problem.    The global linear Fr\'echet regression problem has also been  addressed  in \cite{Torresetal2024}, when the response and regressor are curve processes evaluated in a compact Riemannian manifold.   In this response and regressor  context,  the present paper covers two important gaps in the current literature about  extrinsic and intrinsic  local linear  regression for functional response and regressor evaluated in a compact Riemannian  manifold. Specifically, given the local character of the exponential map, kernel curve regression can be implemented in an extrinsic way in the manifold on the time-varying tangent space. Furthermore, an intrinsic approach can also be adopted,  based on a weighted Fr\'echet mean, extending the results in \cite{Petersen.19}.  In both approaches, the optimality in the mean-square sense is proved.

The main difficulties arise in the derivation of the extrinsic local linear Fr\'echet curve predictor, since up to our knowledge, least-squares  local linear Fr\'echet curve regression has not been addressed yet, when response and regressor are evaluated in a separable Hilbert space. This issue has  been first time addressed   in the present paper. The conditions required for the  implementation of this local linear functional predictor,  in the $L^{2}$-space of vector functions, with values in the time-varying tangent space on a compact Riemannian manifold, are also obtained here.  The
  Riemannian Functional Principal Component Analysis (RFPCA) of the curve response and regressor processes plays a crucial role in the derivation of  this extrinsic local linear Fr\'echet curve regression  in manifolds (see \cite{Dai.18}).

 The two presented approaches are illustrated  in the  simulation study undertaken, and  in the real-data application, being compared with a Nadaraya-Watson type   Fr\'echet curve  predictor. The effect of time correlation, and the rules applied for fitting tuning parameters, including the bandwidth parameter, are also illustrated.

  The data set  analyzed is available in the  NASA's National Space Science Data Center  in the period 02/11/1979--06/05/1980, providing the time--varying geocentric  latitude and longitude of the satellite NASA's MAGSAT  spacecraft, and the time--varying  spherical coordinates of the magnetic vector  field  (see  \cite{Torresetal2024}). Data  have been   recorded every half second. The  NASA's MAGSAT spacecraft orbited the Earth  every 88 minutes during seven months at around 400 km altitude. The
$5$--fold cross validation methodology  is implemented to assess the predictive performance of the compared local  Fr\'echet functional   predictors  of the vector magnetic field through time.

The outline of the paper is summarized as follows. A projection least-squares  local linear Fr\'echet  functional regression predictor is derived in Section \ref{sec1} for response and regressor  evaluated in a separable Hilbert space. In Section \ref{geoass}, the geometrical background and assumptions are provided. The local linear  Fr\'echet functional regression methodology introduced in Section \ref{sec1} is applied in Section \ref{lefr0} to obtaining an extrinsic local linear Fr\'echet curve predictor in manifolds.
An alternative  intrinsic local linear  Fr\'echet curve predictor is proposed in  Section
 \ref{sec4},  based on a    weighted Fr\'echet mean approach. Its optimality is proved. The simulation study  is undertaken  in Section  \ref{simstud}.
 The real-data application  is addressed in Section \ref{rda}.

\section{Local linear  Fr\'echet regression for Hilbert-valued  response and regressors}
\label{sec1}
Standard literature on local linear functional regression has been developed for  response evaluated in a metric space and euclidean regressors, as well as for  scalar response and functional regressors evaluated in metric or semimetric space (see, e.g., \cite{Baillo.09}; \cite{Barrientos2010}; \cite{Benhenni.07}; \cite{Benhenni.17}; \cite{Benhenni.19}; \cite{Berlinet.11}). This section addresses least-squares local linear regression for
 response and regressor  evaluated in a separable Hilbert space.

Let $\mathcal{H}$ be a separable Hilbert space. Denote by     $Y:(\Omega,\mathcal{A},P)\to$ \linebreak $(\mathcal{H},\mathcal{B}(\mathcal{H}),P_{Y})$ and $X:(\Omega,\mathcal{A},P)\to (\mathcal{H},\mathcal{B}(\mathcal{H}),P_{X})$  two measurable mappings respectively defining the functional response and regressor, obeying the following equation
$$Y= m(X)+\varepsilon,$$
\noindent where $P[\varepsilon \in \mathcal{H}]=1,$ $E[\varepsilon /X]=0,$  and $E[\|\varepsilon\|_{\mathcal{H}}^{2}/X]=\sigma_{\varepsilon}^{2}<\infty .$ Here, $P_{Y}$ and $P_{X}$ denote the probability measures induced by the response $Y,$ and regressor $X,$ respectively.   $\mathcal{B}(\mathcal{H})$ is the Borel $\sigma$-- algebra generated by the open sets of $\mathcal{H}.$
  In what follows we denote the regression operator  by $m:\mathcal{H}\to \mathcal{H},$ with $m(x_{0})=E[Y/X=x_{0}],$ $x_{0}\in \mathcal{H}.$
  Assume that $m$ admits a Fr\'echet derivative given by the bounded linear operator $\mathcal{A}$ such that $$\lim_{\|h\|_{\mathcal{H}}\to 0}\frac{\|m(x+h)-m(x)-\mathcal{A}(h)\|_{\mathcal{H}}}{\|h\|_{\mathcal{H}}}=0,\quad \forall x\in \mathcal{H}.$$

Thus, in the norm $\|\cdot\|_{\mathcal{H}}$ of $\mathcal{H},$   for  $x\in \mathcal{H},$ with $\|x-x_{0}\|_{\mathcal{H}}<\epsilon,$ one can consider the local linear approximation
\begin{equation}m(x)\underset{\mathcal{H}}{\sim } m(x_{0})+\mathcal{A}(x-x_{0}),\quad x_{0}\in \mathcal{H},\label{slor}
\end{equation}
\noindent in terms of the slope operator $\mathcal{A},$ providing the Fr\'echet derivative of $m.$

\medskip  The functional value $m(x_{0})$ and operator $\mathcal{A}$ can be estimated by solving the following local linear minimization problem:

\begin{eqnarray}&&
(\widehat{\beta_{0}},\widehat{\beta_{1}})=\mbox{arg min}_{\beta_{0}\in \mathcal{H},\beta_{1}\in \mathcal{L}(\mathcal{H})}E\left[
\|Y-\beta_{0}-\beta_{1}(X-x_{0})\|_{\mathcal{H}}^{2}\right.\nonumber\\
&&\hspace*{6.5cm}\left.\times
K_{B_{n}}\left(\|X-x_{0}\|_{\mathcal{H}}\right)\right]\label{lossef}
\end{eqnarray}
\noindent where, from equation (\ref{slor}),\begin{eqnarray}&&E\left[
\|Y-\beta_{0}-\beta_{1}(X-x_{0})\|_{\mathcal{H}}^{2}K_{B_{n}}\left(\|X-x_{0}\|_{\mathcal{H}}\right)\right]\nonumber\\
&&=\int_{\mathcal{H}\times \mathcal{H} }K_{B_{n}}\left(\|X-x_{0}\|_{\mathcal{H}}\right)
\nonumber\\
&& \times
\left\langle Y-\beta_{0}-\beta_{1}(X-x_{0}),Y-\beta_{0}-\beta_{1}(X-x_{0})\right\rangle_{\mathcal{H}}P(dX,dY)\label{exp}\\
&&=\int_{\mathcal{H}\times \mathcal{H} }K_{B_{n}}\left(\|X-x_{0}\|_{\mathcal{H}}\right)
\nonumber\\
&& \times
\left\langle Y-m(x_{0})-\mathcal{A}(X-x_{0}),Y-m(x_{0})-\mathcal{A}(X-x_{0})\right\rangle_{\mathcal{H}}P(dX,dY).\nonumber\\
\label{PIll}
\end{eqnarray}

\noindent Here, $K$ is a probability density and $B_{n}$ is a bandwidth parameter indexed by the sample size $n,$  with $B_{n}\to 0,$ as $n\to \infty.$
 We assume that  the  Fr\'echet derivative operator  $\mathcal{A}$ is a self--adjoint  compact operator on $\mathcal{H},$ admitting the following pure point spectral diagonal expansion:
\begin{equation}
\mathcal{A}=\sum_{k\geq 1}\lambda_{k}(\mathcal{A})\phi_{k}\otimes \phi_{k},
\label{eqopfrechet}
\end{equation}
\noindent where $\{\phi_{k},\ k\geq 1\}$ and  $\{\lambda_{k}(\mathcal{A}),\ k\geq 1\}$ respectively denote the eigenfunctions and eigenvalues of $\mathcal{A}.$
\begin{remark}The existence of a bounded Fr\'echet derivative operator means that the local behavior of the regression operator is dominated by a bounded linear operator.  A typical example is the exponential operator which admits a Fr\'echet derivative,  capturing local slope changes. The additional condition of compactness of the Fr\'echet derivative  does not suppose a  strong restriction, which is understood as  the local  linear approximation of the regression operator  ignores (or dampens) variations at high frequency dimensions of the Hilbert space.
The symmetry assumption can be ignored, since  under compactness assumption, the singular value decomposition  holds beyond the self-adjoint condition.
\end{remark}

  Applying  Parseval identity   in equation (\ref{PIll}) in terms of the eigenfunctions $\left\{ \phi_{k}, \ k\geq 1\right\}$ of $\mathcal{A}$ in (\ref{eqopfrechet}), we obtain
\begin{eqnarray}&&L(\beta_{0},\beta_{1})=E\left[
\|Y-\beta_{0}-\beta_{1}(X-x_{0})\|_{\mathcal{H}}^{2}K_{B_{n}}\left(\|X-x_{0}\|_{\mathcal{H}}\right)\right]\nonumber\\
&&=L(m(x_{0}),\mathcal{A})=\int_{\mathcal{H}\times \mathcal{H} }K_{B_{n}}\left(\|X-x_{0}\|_{\mathcal{H}}\right)
\nonumber\\
&& \hspace*{1cm}\times \sum_{k\geq 1}\left[\left\langle Y,\phi_{k}\right\rangle_{\mathcal{H}}-\left\langle m(x_{0}), \phi_{k}\right\rangle_{\mathcal{H}}-
\left\langle\mathcal{A}(X-x_{0}),\phi_{k}\right\rangle_{\mathcal{H}}\right]^{2}P(dX,dY)\nonumber\\
&&
=\int_{\mathcal{H}\times \mathcal{H} }K_{B_{n}}\left(\|X-x_{0}\|_{\mathcal{H}}\right)
\nonumber\\
&& \times \sum_{k\geq 1}\left[\left\langle Y,\phi_{k}\right\rangle_{\mathcal{H}}-\left\langle m(x_{0}), \phi_{k}\right\rangle_{\mathcal{H}}-\lambda_{k}(\mathcal{A})
\left\langle(X-x_{0}),\phi_{k}\right\rangle_{\mathcal{H}}\right]^{2}P(dX,dY).\nonumber\\
\label{exp2}
\end{eqnarray}

 By differentiation in equation (\ref{exp2}) with respect to $\beta_{0}=m(x_{0}),$ and with respect to $\beta_{1}=\mathcal{A},$
   and equalizing to zero, applying again Parseval identity, we obtain
   \begin{eqnarray} \hspace*{-1cm}
0&=&\frac{\partial L}{\partial \beta_{0}}(\beta_{0},\beta_{1})= \frac{\partial L}{\partial m(x_{0})}(m(x_{0}),\mathcal{A})=\int_{\mathcal{H}\times \mathcal{H} }K_{B_{n}}\left(\|X-x_{0}\|_{\mathcal{H}}\right)\nonumber\\
 &\times & \left[2\sum_{k\geq 1}\left[\left\langle m(x_{0}), \phi_{k}\right\rangle_{\mathcal{H}}+
\left\langle\mathcal{A}(X-x_{0}),\phi_{k}\right\rangle_{\mathcal{H}}-\left\langle Y,\phi_{k}\right\rangle_{\mathcal{H}}\right]\phi_{k}\right]P(dX,dY)
\nonumber\\
0&=&\frac{\partial L}{\partial \beta_{1}}(\beta_{0},\beta_{1})=\frac{\partial L}{\partial \mathcal{A}}(m(x_{0}),\mathcal{A})
\nonumber\\
 &&\hspace*{-0.5cm}=2\int_{\mathcal{H}\times \mathcal{H} }K_{B_{n}}\left(\|X-x_{0}\|_{\mathcal{H}}\right)
\left[\sum_{k\geq 1}\lambda_{k}(\mathcal{A})\left\langle (X-x_{0}),\phi_{k}\right\rangle_{\mathcal{H}}\left\langle(X-x_{0}),\phi_{k}\right\rangle_{\mathcal{H}}\right. \nonumber\\
&&
\left.-\left\langle Y-m(x_{0}),\phi_{k}\right\rangle_{\mathcal{H}}\left\langle (X-x_{0}),\phi_{k}\right\rangle_{\mathcal{H}}\right]P(dX,dY).
\nonumber\\
 \label{exp2b}
\end{eqnarray}
\noindent That is, from  (\ref{exp2b}), we obtain
\begin{eqnarray} && \int_{\mathcal{H}\times \mathcal{H} }K_{B_{n}}\left(\left\|X-x_{0}\right\|_{\mathcal{H}}\right)\left\{m(x_{0})-Y+\mathcal{A}(X-x_{0})\right\}P(dX,dY)=0\label{elf0}\\
&&
\int_{\mathcal{H}\times \mathcal{H} }K_{B_{n}}\left(\|X-x_{0}\|_{\mathcal{H}}\right)
 \left[\left\langle \mathcal{A}(X-x_{0}),X-x_{0}\right\rangle_{\mathcal{H}}\right.\nonumber\\
&&\hspace*{5.5cm}
 \left. - \left\langle Y-m(x_{0}),X-x_{0}\right\rangle_{\mathcal{H}}\right]P(dX,dY)=0.\nonumber\\\label{elf}
\end{eqnarray}

Furthermore, from  (\ref{elf0}),
\begin{eqnarray} &&
m(x_{0})\mu_{0}-r_{0}+\mathcal{A}(\mu_{1})=0\label{effn1}\\
&&m(x_{0})=\frac{r_{0}-\mathcal{A}(\mu_{1})}{\mu_{0}},
\label{effn2}
\end{eqnarray}
\noindent where  \begin{eqnarray} \mu_{0}&=&\int_{\mathcal{H}}K_{B_{n}}\left(\left\|X-x_{0}\right\|_{\mathcal{H}}\right)P(dX)\nonumber\\ \mu_{1}&=&\int_{\mathcal{H}}K_{B_{n}}\left(\left\|X-x_{0}\right\|_{\mathcal{H}}\right)\left(X-x_{0}\right)P(dX)\nonumber\\
P(dX)&=&\int_{\mathcal{H}}P(dX,dY)\nonumber\\
r_{0}&=&E\left[K_{B_{n}}\left(\left\|X-x_{0}\right\|_{\mathcal{H}}\right)Y\right].\label{eqmarginal}
\end{eqnarray}

Note that, from equation  (\ref{s2302h}) in   \ref{app0}, \begin{eqnarray}&&\int_{\mathcal{H}\times \mathcal{H}}S^{(k)}(x,x_{0},B_{n})P(dx,dy)=
 \int_{\mathcal{H}}S^{(k)}(x,x_{0},B_{n})P(dx)\nonumber\\ &&=\frac{\sigma_{0}^{2}(k)}{\sigma_{0}^{2}(k)}=1.\label{s2302hb}\end{eqnarray}
 \noindent Hence, equation  (\ref{s2302g}) can be rewritten as
 \begin{equation}
  \widehat{m(x_{0})}(\phi_{k})=\mbox{arg min}_{h\in \mathcal{H}}E\left[S^{(k)}(X,x_{0},B_{n})[Y(\phi_{k})-h(\phi_{k})]^{2}\right],\ k\geq 1,
 \label{s2302i}
 \end{equation}
\noindent where, as given  in (\ref{s2302h}),  $S^{(k)}$ is defined by
 \begin{eqnarray}&& S^{(k)}(X,x_{0},B_{n})=\frac{1}{\sigma_{0}^{2}(k)}\left[K_{B_{n}}\left(\left\|X-x_{0}\right\|_{\mathcal{H}}\right)\left[\mu_{2}^{(k)}
 -\mu_{1}^{(k)}(X-x_{0})(\phi_{k})\right]\right],\nonumber\\
 \label{s2302h2}\end{eqnarray}
 \noindent with $\mu_{j}^{(k)}$ being introduced in  (\ref{eqnirmfunct}), for $k\geq 1,$ and $j\geq 1,$ and
$\sigma_{0}^{2}(k)=\mu_{2}^{(k)}\mu_{0}-[\mu_{1}^{(k)}]^{2},$ $k\geq 1.$

 Thus, our local linear functional  regression predictor at the functional argument $x_{0}\in \mathcal{H},$ is given by
 \begin{equation}
 \widehat{Y}(x_{0})=\widehat{m(x_{0})}=\sum_{k\geq 1} \widehat{m(x_{0})(\phi_{k})}\phi_{k}.\label{s2302j}
 \end{equation}

\section{Background and assumptions}
\label{geoass}
Let $\mathcal{M}$ be a smooth manifold with topological dimension $d$ in an
Euclidean space $\mathbb{R}^{d_{0}},$ $d\leq d_{0}$. Denote  by $\{ \mathcal{T}_{p}\mathcal{M},\ p\in \mathcal{M} \}$  the tangent spaces at the points of $\mathcal{M}.$ A
\emph{Riemannian metric} on $\mathcal{M}$ is a family of inner products $\mathcal{G}(p):\mathcal{T}_{p}\mathcal{M}\times \mathcal{T}_{p}\mathcal{M}\longrightarrow \mathbb{R}$ that smoothly varies over $p\in \mathcal{M}.$ Hence, $\left(\mathcal{M},\mathcal{G}\right)$ endowed with this Riemann metric $\mathcal{G}$ is a Riemann manifold. Specifically, the metric on $\mathcal{M}$ induced by $\mathcal{G}$ is the geodesic distance $d_{\mathcal{M}}.$  A geodesic is a locally length minimizing
curve.
The \emph{exponential map} at $p\in \mathcal{M}$  applied to $v\in \mathcal{T}_{p}\mathcal{M}$ is given by   $\exp_{p}(v)=\gamma_{v}(1),$ where  $v\in \mathcal{T}_{p}\mathcal{M}$ is a tangent vector  at $p,$  and   $\gamma_{v}=\left\{\exp_{p}(tv), \ t\in [0,1]\right\}$ is the unique geodesic with initial location $\gamma_{v}(0)=p,$ and velocity $\gamma_{v}^{\prime }(0)=v.$

The inverse of the exponential map is called \emph{the logarithm map}, and is denoted by $\log_{p},$ $p\in \mathcal{M}.$
The injectivity radius  at $p\in \mathcal{M},$ denoted by $\mbox{inj}_{p},$ is
    the radius of the largest ball centered at the origin in the tangent space $\mathcal{T}_{p}\mathcal{M}$ on which the exponential map $\exp_{p}$ is a diffeomorphism  onto its image. If $(\mathcal{M}, d_{\mathcal{M}})$ is a complete metric space, then $\exp_{p}$ is defined on the entire tangent space, and  $\exp_{p}$ is  a diffeomorphism in a neighborhood of
the origin of $\mathcal{T}_{p}\mathcal{M}.$

Denote by $(\Lambda ,\mathcal{A},P)$ the basic probability space.
Consider the  space \linebreak $\left(\mathcal{C}_{\mathcal{M}}(\mathcal{T}),d_{\mathcal{C}_{\mathcal{M}}(\mathbb{T})}\right)=\left\{x:\mathbb{T}\to \mathcal{M}:\ x\in\mathcal{C}(\mathbb{T})\right\},$
constituted by  $\mathcal{M}$-valued continuous functions on a compact interval $\mathbb{T} $ with the supremum geodesic distance $$d_{\mathcal{C}_{\mathcal{M}}(\mathbb{T})}(x(\cdot),y(\cdot))=\sup_{t\in \mathbb{T}}d_{\mathcal{M}}\left( x(t),y(t)\right), \quad \forall x(t),y(t)\in \left(\mathcal{C}_{\mathcal{M}}(\mathbb{T}), d_{\mathcal{C}_{\mathcal{M}}(\mathbb{T})}\right).$$

 Let $Z=\{Z_{s},\ s\in \mathbb{Z}\}$   be a family  of random elements  in  $\left(\mathcal{C}_{\mathcal{M}}(\mathbb{T}),d_{\mathcal{C}_{\mathcal{M}}(\mathbb{T})}\right) $   indexed by $\mathbb{Z}.$
    Specifically, $Z:\mathbb{Z}\times (\Lambda,\mathcal{A},\mathcal{P})\to \mathcal{C}_{\mathcal{M}}(\mathbb{T})$ defines a measurable mapping,    and
 $\mathcal{P}\left(\xi \in \Lambda;\ Z_{s}(\cdot ,\xi )\in \left(\mathcal{C}_{\mathcal{M}}(\mathbb{T}),d_{\mathcal{C}_{\mathcal{M}}(\mathbb{T})}\right)\right)=1,$ for every $s\in  \mathbb{Z}.$
Here, $Z_{s}(t)$  denotes the pointwise value (i.e., the one-dimensional time projection) at $t\in \mathbb{T}$ of the   random curve $Z_{s}$ in $\mathcal{M},$ for each  $s\in \mathbb{Z}.$

For each  $s\in \mathbb{Z},$ the
intrinsic Fr\'echet functional mean $\mu_{Z_{s},\mathcal{M}} $    is given by
\begin{eqnarray}
 \mu_{Z_{s},\mathcal{M}}(t)&=&\mbox{arg min}_{p\in \mathcal{M}}E\left([d_{\mathcal{M}}\left(Z_{s}(t),p\right)]^{2}\right)\nonumber\\
 &=&\mbox{arg min}_{p\in \mathcal{M}}\int [d_{\mathcal{M}}\left(z_{s}(t),p\right)]^{2}dP_{Z_{s}(t)}(z_{s}(t)),\ t\in \mathbb{T},\nonumber\\
 \label{FM}
\end{eqnarray}
\noindent where  $dP_{Z_{s}(t)}$ denotes the  probability measure induced by  the one-dimensional time projection $Z_{s}(t)$   of the random curve $Z_{s}\subset \mathcal{M}$ at time $s\in \mathbb{Z}.$
Thus,  for each $s\in \mathbb{Z},$ $\mu_{Z_{s},\mathcal{M}}$ is the curve in $\mathcal{M}$  providing the best  pointwise approximation of $Z_{s}$  in the mean quadratic geodesic distance sense. Since $Z_{s}\in  \left(\mathcal{C}_{\mathcal{M}}(\mathbb{T}),d_{\mathcal{C}_{\mathcal{M}}(\mathbb{T})}\right)$ almost surely (a.s.), $\mu_{Z_{s},\mathcal{M}}(t)$ is also  continuous, for every $s\in \mathbb{Z}.$ Hence,  the following equivalent  definition of  $\mu_{Z_{s},\mathcal{M}}(\cdot )$ can be considered:

\begin{eqnarray}\mu_{Z_{s},\mathcal{M}}(\cdot )&=&\mbox{arg min}_{ x(\cdot )\in \mathcal{C}_{\mathcal{M}}(\mathbb{T})}
E\left(\int_{\mathbb{T}}[d_{\mathcal{M}}\left(Z_{s}(t), x(t)\right)]^{2}dt\right)\nonumber\\
&=&\mbox{arg min}_{ x(\cdot )\in \mathcal{C}_{\mathcal{M}}(\mathbb{T})}\int_{\mathcal{C}_{\mathcal{M}}(\mathbb{T})}\int_{\mathbb{T}}[d_{\mathcal{M}}\left(z_{s}(t), x(t)\right)]^{2}dt
dP_{Z_{s}}(z_{s}),\nonumber\\
\label{FFM}
\end{eqnarray}
\noindent where $dP_{Z_{s}}$ denotes the infinite--dimensional probability measure induced by $Z_{s}$ for every  $s\in \mathbb{Z}.$

\subsection{Assumptions}
\label{sec3.2}

This section provides the assumptions on sample path regularity, and the  required geometrical and probabilistic conditions,   ensuring existence and uniqueness of the proposed extrinsic and intrinsic local linear Fr\'echet functional  predictors. Specifically, the following geometrical conditions are assumed:

\begin{itemize}
\item[(i)] $\mathcal{M}$ is a $d$--dimensional compact  and connected Riemannian submanifold  of a Euclidean space  $\mathbb{R}^{d_{0}},$ $d\leq d_{0},$ with geodesic distance $d_{\mathcal{M}}$  induced by
the Euclidean metric.
\item[(ii)] The  sectional curvature of manifold  $\mathcal{M}$ is bounded, positive, and of smooth variation.
\end{itemize}
\begin{remark}
\label{rem1}
The exponential map is defined on the entire tangent space under (i) (see, e.g.,  \cite{Dai.18}). Under (ii),  the geodesic distance between two points in the manifold   is upper bounded by  the  Euclidean distance of their corresponding tangent vectors (see Assumption A2,  and Proposition 1 in \cite{Dai.18}).
\end{remark}
\begin{remark}
\label{rem12}
Although our main practical motivation involves the sphere satisfying conditions (i) and (ii),
 the   existence and uniqueness of the   $L^{p},$ $1\leq p\leq \infty,$ center of mass  (minimizer of the $L^{p}$--energy function) of a probability measure  holds for more general families of complete connected  Riemannian manifolds  (see Theorem 1 in
     \cite{LeBarden17}). The effect of the curvature and  topology of the manifold has  been analyzed, for example,  in \cite{Afsari13} (see also the overview
   in Section 1.1  in  \cite{Afsari11}).

\end{remark}

Let $Y=\{ Y_{s},\ s\in \mathbb{Z}\}$ and $X=\{ X_{s},\ s\in \mathbb{Z}\}$ be the response $Y$ and regressor $X$ curve processes evaluated   in the Riemannian  manifold $\mathcal{M}.$ The following  conditions are assumed on the bivariate curve process $(X,Y):$
\begin{itemize}
\item[(iii)]
For every time $s_{i}\in \mathbb{Z},$ the  random Lipschitz constants $L_{Y}(Y_{s_{i}})$ and  $L_{X}(X_{s_{i}})$ of $Y_{s_{i}}$ and $X_{s_{i}}$
    are almost surely (a.s.) finite. The Lipschitz constants
     $L(\mu_{Y_{s_{i}},\mathcal{M}})$ and  $L(\mu_{X_{s_{i}},\mathcal{M}})$  of the Fr\'echet means  $\mu_{Y_{s_{i}},\mathcal{M}}$ and $\mu_{X_{s_{i}},\mathcal{M}}$   are also finite.  Particularly,  assume that $E\left[\left(L_{X}(X_{s_{i}})\right)^{2}\right]<\infty,$  and $E\left[\left(L_{Y}(Y_{s_{i}})\right)^{2}\right]<\infty,$  for any $s_{i}\in \mathbb{Z}.$
     Note that,  for any curve $z(\cdot),$  $L(z)=\sup_{t\neq s}\frac{d_{\mathcal{M}}(z(t),z(s))}{\left|t-s\right|}.$
\item[(iv)] The $\mathcal{M}$--valued bivariate curve process $\{(Y_{s}, X_{s}),\ s\in \mathbb{Z}\}$ is strictly stationary.
We then denote by   $\mathcal{Y}_{\mathcal{C}_{\mathcal{M}}(\mathbb{T})}\subseteq \left(\mathcal{C}_{ \mathcal{M}}(\mathbb{T}), d_{\mathcal{C}_{\mathcal{M}}(\mathbb{T})}\right),$ and   $\mathcal{X}_{\mathcal{C}_{\mathcal{M}}(\mathbb{T})}\subseteq $\linebreak $\left(\mathcal{C}_{ \mathcal{M}}(\mathbb{T}), d_{\mathcal{C}_{\mathcal{M}}(\mathbb{T})}\right)$ the respective  supports of their marginal probability measures.
Assume also that  $\{ \log_{\mu_{X_{0},\mathcal{M}}(t)}\left(X_{s}(t)\right),\ s\in \mathbb{Z}\}$ is mean--square ergodic in the first moment  in the norm of $\mathbb{H},$ and in the second--order moments
in the norm of the space $\mathcal{S}(\mathbb{H})$ of Hilbert--Schmidt operators on $\mathbb{H}.$

\item[(v)]  Curve processes
$X=\{ X_{s},\ s\in \mathbb{Z}\}$ and  $Y=\{ Y_{s},\ s\in \mathbb{Z}\}$
       have the same  Fr\'echet  functional mean. The supports of their marginal probability measures $dP_{X_{0}}(\cdot )$ and $dP_{Y_{0}}(\cdot )$ are included in the ball of the space $\left(\mathcal{C}_{ \mathcal{M}}(\mathbb{T}), d_{\mathcal{C}_{\mathcal{M}}(\mathbb{T})}\right),$ centered at the Fr\'echet functional  mean $\mu_{X_{0},\mathcal{M}}=\mu_{Y_{0},\mathcal{M}}$ with radius
$R=\inf_{t\in \mathbb{T}}\ \mbox{inj}_{\mu_{X_{0},\mathcal{M}}(t)}.$
  Here, $\mbox{inj}_{\mu_{X_{0},\mathcal{M}}(t)}$  denotes the injectivity radius of the exponential map whose  origin  is at $\mu_{X_{0},\mathcal{M}}(t)$ for each  $t\in \mathbb{T}.$
\end{itemize}

\begin{remark}
\label{rem1aa}
Concentration of curve data around the Fr\'echet curve mean in condition (v) ensures existence and uniqueness of Fr\'echet means.
 Weaker versions of condition (v) can be considered to ensure the existence of the $L^{p}$-center of mass of a probability measure. In particular, the existence and uniqueness of such a center  still hold when the support of the underlying probability measure is  contained   into  a ball, whose radius is  upper  bounded by a function of
 $p,$ the injectivity radius of the manifold,  and an upper bound on the manifold sectional curvatures (see Theorem 1 in
     \cite{LeBarden17}). Under this more general setting, in  \cite{Afsari13},    the convergence of a  constant step-size gradient descent algorithm is investigated,  for computing the $L^{p}$-center of mass of a probability measure.

 Under strictly stationarity condition in (iv),    assumption on the existence of a common marginal Fr\'echet curve mean of  the response and regressor can be weakened.  A sufficiently small supremum geodesic distance  between the Fr\'echet curve means of the regressor and response can be considered, such that  condition  (v) holds  in terms of the intersection of the  balls of radius $R_{1}=\inf_{t\in \mathbb{T}}\ \mbox{inj}_{\mu_{X_{0},\mathcal{M}}(t)}$ and $R_{2}=\inf_{t\in \mathbb{T}}\ \mbox{inj}_{\mu_{Y_{0},\mathcal{M}}(t)},$   centered at $\mu_{X_{0},\mathcal{M}}(\cdot )$ and $\mu_{Y_{0},\mathcal{M}}(\cdot ),$ respectively.

\end{remark}

\section{Extrinsic local linear  Fr\'echet   curve regression  in manifolds}
\label{lefr0}
We consider the results obtained in Section \ref{sec1} when $\mathcal{H}=\mathbb{H}=L^{2}_{\mathcal{M}}(\mathbb{T})$ is   the ambient  Hilbert space of  vector functions with values      in the time--varying  tangent space,  given by
   \begin{equation}\mathbb{H}=\left\{h=(h_{1}, \dots, h_{d_{0}})^{T}:\ \mathbb{T}\to \mathbb{R}^{d_{0}}:\ \int_{\mathbb{T}}h(t)^{T}h(t)dt<\infty\right\},\label{ahs}
 \end{equation}
\noindent equipped with the inner product $\left\langle h,f\right\rangle_{\mathbb{H}}=\int_{\mathbb{T}}h(t)^{T}f(t)dt,$ and norm   $\|h\|_{\mathbb{H}}=\left[\left\langle h,h\right\rangle_{\mathbb{H}}\right]^{1/2},$ for every  $h, f\in \mathbb{H}.$

Under mild conditions the mean function of the log--mapped data in $\mathbb{H}$ is  zero when the logarithm map has origin at
the functional Fr\'echet mean $\mu_{Y_{0},\mathcal{M}}=\mu_{X_{0},\mathcal{M}}$ under (v)  (see Theorem 2.1 of \cite{Bhattacharya03}).
The trace autocovariance matrix operators $\mathcal{R}^{LY}_{0}$ and  $\mathcal{R}^{LX}_{0}$ of the log--mapped response  $\left\{ \log_{\mu_{Y_{0},\mathcal{M}}(t)}\left(Y_{s}(t)\right),\ s\in \mathbb{Z}\right\}$ and regressor $\left\{ \log_{\mu_{X_{0},\mathcal{M}}(t)}\left(X_{s}(t)\right),\ s\in \mathbb{Z}\right\}$ processes are respectively
defined as
\begin{eqnarray} &&\mathcal{R}^{LY}_{0}= E\left[\log_{\mu_{Y_{0},\mathcal{M}}(\cdot )}\left(Y_{0}(\cdot)\right)\otimes [\log_{\mu_{Y_{0},\mathcal{M}}(\cdot)}\left(Y_{0}(\cdot)\right)]^{T}\right]\nonumber\\
&&\mathcal{R}^{LX}_{0}= E\left[\log_{\mu_{X_{0},\mathcal{M}}(\cdot )}\left(X_{0}(\cdot)\right)\otimes [\log_{\mu_{X_{0},\mathcal{M}}(\cdot)}\left(X_{0}(\cdot)\right)]^{T}\right].\nonumber
\end{eqnarray}
\noindent In the next result we assume that
\begin{eqnarray}
\mathcal{R}^{LY}_{0}&=&\sum_{k\geq 1}\lambda_{k}(Y)\phi_{k}\otimes \phi_{k}^{T}\nonumber\\
\mathcal{R}^{LX}_{0}&=&\sum_{k\geq 1}\lambda_{k}(X)\phi_{k}\otimes \phi_{k}^{T}
\label{eqdiastgs}
\end{eqnarray}
\noindent  in the norm of  $\mathbb{H}\otimes \mathbb{H},$ where  $\mathcal{R}^{LY}_{0}(\phi_{k})= \lambda_{k}(Y) \phi_{k},$ and
$\mathcal{R}^{LX}_{0}(\phi_{k})= \lambda_{k}(X) \phi_{k},$
with $\lambda_{k}(Y)$ and $\lambda_{k}(X)$ respectively being the  eigenvalues of $\mathcal{R}^{LY}_{0}$ and $\mathcal{R}^{LX}_{0}$
    associated with the vector eigenfunction $\phi_{k}:\mathbb{T}\to \mathbb{R}^{d_{0}},$ for every  $k\geq 1.$

\begin{proposition}
\label{pr1bb}
Assume  that  $\{(Y_{s}, X_{s}),\ s\in \mathbb{Z}\}$ is strictly stationary, satisfying  conditions  (i)-(iii), and (v), and that
the following RFPC decompositions
  \begin{eqnarray}
\log_{\mu_{Y_{0},\mathcal{M}}(t)}\left(Y_{s}(t)\right)&=&\sum_{k=1}^{\infty}\chi_{k}(s,Y)\phi_{k}(t),\quad t\in \mathbb{T}\nonumber\\
\log_{\mu_{X_{0},\mathcal{M}}(t)}\left(X_{s}(t)\right)&=&\sum_{k=1}^{\infty}\chi_{k}(s,X)\phi_{k}(t),\quad t\in \mathbb{T},
\label{KLTSAHS}
\end{eqnarray}
\noindent  hold.  Here,  for $s\in \mathbb{Z},$
\begin{eqnarray}\chi_{k}(s,Y)&=&\int_{\mathbb{T}} \left[\log_{\mu_{Y_{0},\mathcal{M}}(t)}\left(Y_{s}(t)\right)\right]^{T}\phi_{k}(t)dt =LY_{s}(\phi_{k}),\ k\geq 1,
\nonumber\\
\chi_{k}(s,X)&=&\int_{\mathbb{T}} \left[\log_{\mu_{X_{0},\mathcal{M}}(t)}\left(X_{s}(t)\right)\right]^{T}\phi_{k}(t)dt =LX_{s}(\phi_{k}),\ k\geq 1,
\label{proyresl}
\end{eqnarray}
\noindent is the $s$-varying RFPC scores of $\log_{\mu_{Y_{0},\mathcal{M}}(t)}\left(Y_{s}(t)\right)$ and $\log_{\mu_{X_{0},\mathcal{M}}(t)}\left(X_{s}(t)\right),$
 respectively.

Assume also that the regression operator, characterizing the correlation between the log-mapped response and regressor processes in   $\mathbb{H},$  admits a compact Fr\'echet derivative operator satisfying (\ref{eqopfrechet}),
 in terms of the  eigenfunctions in (\ref{KLTSAHS})--(\ref{proyresl}).   Then, the exponential map
  \begin{equation}
 \widehat{Y_{s}}(x_{0})=\exp_{\mu_{Y_{0},\mathcal{M}}(\cdot)}\left(\sum_{k\geq 1} \widehat{m(x_{0}^{(s)})}(\phi_{k})\phi_{k}\right),\ s\in \mathbb{Z},\label{s2302jb}
 \end{equation}

  \noindent of the least-squares $\mathbb{H}$-valued local linear predictor $\sum_{k\geq 1} \widehat{m(x_{0}^{(s)})}(\phi_{k})\phi_{k}$
 stays on the  manifold surface, and its residual variability
   is upper bounded by the residual variability of $\sum_{k\geq 1} \widehat{m(x_{0}^{(s)})}(\phi_{k})\phi_{k}$ in $\mathbb{H}.$ Here, for every $k\geq 1,$
  $\widehat{m(x_{0}^{(s)})}(\phi_{k})$ is computed from   equations   (\ref{s2302i})--(\ref{s2302h2}), in terms of the log-mapped response and regressor processes.
\end{proposition}
\begin{remark}
\label{rem5}
Proposition 1 also holds  beyond the condition of the` RFPCAs  of the  log-mapped response and regressor processes are given in terms of a common eigenfunction system (see Chapter 8 in \cite{Bosq.00}).

We have also to note that  the  invariance of the kernel entries of matrix operators  on $\mathbb{H}=L^{2}_{\mathcal{M}}(\mathbb{T}),$ under translations, allows their diagonalization in terms of a vector  complex exponential basis  indexed by $\mathbb{Z}.$    It is well-known that this feature is shared by locally compact Abelian groups (see, e.g., \cite{Marinucci11}). In that case, the assumption of homogeneity of the Fr\'echet derivative indeed means local homogeneity of the regressor operator,  not supposing a strong restriction. In general, the assumption on the spectral diagonalization of  the Fr\'echet derivative operator on $\mathbb{H}$ can be  weakened,  leading to some extra computational burden.

\end{remark}
\begin{proof}
The RFPCAs given  in (\ref{KLTSAHS}), under the existence of a compact Fr\'echet derivative  of the regressor operator, satisfying (\ref{eqopfrechet}) in $\mathbb{H},$ allows the computation of $\widehat{\log_{\mu_{Y_{0},\mathcal{M}}(t)}}\left(Y_{s}(t)\right)=\sum_{k\geq 1} \widehat{m(x_{0}^{(s)})}(\phi_{k})\phi_{k}$
(see also Remark \ref{rem5}).
Condition (ii) implies
 the residual variability associated with the $\mathbb{H}$-valued  least-squares kernel estimator upper bounds  the residual variability of its exponential map (see Remark \ref{rem1}). Conditions (iii) and (v) ensure  $\widehat{Y_{s}}(x_{0})$ in equation (\ref{s2302jb}) stays on  the manifold.

\end{proof}
\begin{remark}
The  empirical version of $\widehat{Y_{s}}(x_{0})$ is given by  $$\widehat{Y_{s}}^{(n)}(x_{0})=\exp_{\mu_{Y_{0},\mathcal{M}}(\cdot)}\left(\sum_{k\geq 1} \widehat{m_{n}(x_{0}^{(s)})}(\phi_{k})\phi_{k}\right),\ s\in \mathbb{Z},$$
 \noindent  with
 \begin{eqnarray}&&
 \widehat{m_{n}(x_{0}^{(s)})}(\phi_{k})=\mbox{arg min}_{h\in \mathbb{H}}\frac{1}{n}\sum_{i=1}^{n}S^{(k)}(\log_{\mu_{X_{0},\mathcal{M}}(\cdot)}\left(X_{s_{i}}(\cdot )\right),x_{0},B_{n})\nonumber\\
 &&\hspace*{5.5cm} \times [LY_{s_{i}}(\phi_{k})-h(\phi_{k})]^{2},\ k\geq 1.
 \label{ev}
 \end{eqnarray}
 \noindent Under conditions assumed in Proposition \ref{pr1bb}, consistency of this empirical predictor follows from condition (iv).
\end{remark}
\section{Intrinsic local  Fr\'echet curve regression  in $\mathcal{M}$}
\label{sec4}

This section introduces a Nadaraya-Watson-type  (NW-type)  curve predictor   evaluated   in $\mathcal{M}$ in Section \ref{ss1}, and an intrinsic local linear
Fr\'echet curve predictor in Section \ref{seclcfp}. The asymptotic least-squares optimality of the second one is proved in Lemma  \ref{lem1} and Proposition \ref{pr1}.

\subsection{Local Fr\'echet curve  prediction based on  NW-type estimation  in $\mathcal{M}$}
\label{ss1}

As before,  $Y=\{ Y_{s},\ s\in \mathbb{Z}\}$ and $X=\{ X_{s},\ s\in \mathbb{Z}\}$ respectively denote the response and regressor curve   processes  evaluated in  $\mathcal{M}$ under conditions  (i)--(v)   in Section \ref{sec3.2}.

For each $h\in \mathbb{N}_{0},$ we consider the theoretical loss function
 \begin{eqnarray}&&
M_{\oplus}(x(t), h, \omega ) =
E\left[ K_{h}\left(d_{\mathcal{M}}\left(X_{s}(t),x(t)\right)\right) \left[d_{\mathcal{M}}\left(Y_{s}(t), \omega\right)
\right]^{2}\right]\nonumber\\
&&=
E\left[ K_{h}\left(d_{\mathcal{M}}\left(X_{0}(t),x(t)\right)\right) \left[d_{\mathcal{M}}\left(Y_{0}(t), \omega\right)
\right]^{2}\right],\ t\in \mathbb{T},\ \omega\in \mathcal{M},\ s\in \mathbb{Z},\nonumber\\
\end{eqnarray}

\noindent
where  $K_{h}\left(d_{\mathcal{M}}\left( X_{0}(t), x(t)\right)\right)$ is a zonal function associated with the $h$th \linebreak eigenspace of the Laplace Beltrami operator on $L^{2}(\mathcal{M}, d\nu ,\mathbb{R}),$  with time-varying pole at  $x(t),$    applied to the time-varying random arguments $X_{s}(t),$ $t\in \mathbb{T},$ $s\in \mathbb{Z}$  (see \cite{Gine75}). The NW-type Fr\'echet curve predictor is given by
\begin{eqnarray}&&\widehat{Y}^{NW}_{s}(t)= m_{\oplus}(x(t),h)=\arg\min_{\omega\in \mathcal{M}}E\left[ K_{h}\left(d_{\mathcal{M}}\left(X_{s}(t),x(t)\right)\right) \left[d_{\mathcal{M}}\left(Y_{s}(t), \omega\right)
\right]^{2}\right]\nonumber\\
&&=\arg\min_{\omega\in \mathcal{M}}
E\left[ K_{h}\left(d_{\mathcal{M}}\left(X_{0}(t),x(t)\right)\right) \left[d_{\mathcal{M}}\left(Y_{0}(t), \omega\right)
\right]^{2}\right],\quad t\in \mathbb{T},\ h\in \mathbb{N}_{0}.\nonumber\\
\label{NWE}
\end{eqnarray}

Let $(Y_{s_{1}}(\cdot),\ X_{s_{1}}(\cdot)),\dots, (Y_{s_{n}}(\cdot),\  X_{s_{n}}(\cdot))$ be
a bivariate  functional sample of size  $n$  of the  $\mathcal{M}$-valued  bivariate curve process $(Y,X).$
The empirical version of NW-type  Fr\'echet curve  predictor in (\ref{NWE}) is given by the minimizer,  in  $\omega  \in \mathcal{M},$ of
the loss function
\begin{eqnarray}&&
\widehat{M}_{\oplus}(x(t), h, \omega) =
\frac{1}{n}\sum_{i=1}^{n} K_{h}\left(d_{\mathcal{M}}\left(X_{i}(t),x(t)\right)\right) \left[d_{\mathcal{M}}\left(Y_{i}(t), \omega\right)
\right]^{2},\ t\in \mathbb{T},\nonumber\\
\end{eqnarray}
\noindent for each $h\in \mathbb{N}_{0}.$
Here,   $K_{h}\left(d_{\mathcal{M}}\left( X_{i}(t), x(t)\right)\right)$    is proportional to  a  Jacobi polynomial  in the case of $\mathcal{M}$ being a connected and compact  two point homogeneous space (see, e.g.,  \cite{MaMalyarenko}). The corresponding  empirical  local curve Fr\'echet  predictor is given by, for every  $t\in \mathbb{T},$ and $h\in \mathbb{N}_{0},$
\begin{eqnarray}&&
 \widehat{Y}^{NW}_{s,n}(t)=\widehat{m}_{\oplus}(x(t),h)
 \nonumber\\
&& =\arg\min_{\omega\in \mathcal{M}}\frac{1}{n}\sum_{i=1}^{n} K_{h}\left(d_{\mathcal{M}}\left(X_{i}(t),x(t)\right)\right) \left[d_{\mathcal{M}}\left(Y_{i}(t),\omega\right)
\right]^{2}.\nonumber\\
\label{NWF}
\end{eqnarray}

\begin{remark}
Parameter $h$ of the zonal function $K_{h}$ plays the role of the inverse of the bandwidth parameter. Specifically, when we alternatively consider  a kernel $K$ defined by a  probability density with compact support contained in $\mathcal{M},$ $h$ plays the role of the inverse of the concentration parameter.
\end{remark}
\subsection{Intrinsic local linear  Fr\'echet curve  prediction  in $\mathcal{M}$}
\label{seclcfp}

Under conditions (i)-(v), let us consider, for any $s\in \mathbb{Z},$ and $h\in \mathbb{N}_{0},$ the following intrinsic local linear  Fr\'echet curve predictor:
\begin{eqnarray}&&
\widehat{Y}_{s}^{LL}(t)=  \arg \min_{\omega \in \mathcal{M}} E\left[s(X_{s}(t),x(t), h)\left[d_{\mathcal{M}}\left(Y_{s}(t),\omega\right)\right]^{2}\right]
\nonumber\\
&&=\arg \min_{\omega\in \mathcal{M}}E\left[s(X_{0}(t),x(t), h)\left[d_{\mathcal{M}}\left(Y_{0}(t),\omega \right)
\right]^{2}\right]=m_{L,\oplus}(x(t),h) ,\ \forall t\in \mathbb{T}.\nonumber\\
\label{elfb}
\end{eqnarray}
\noindent  The non-linear weights  are given by, for every $t\in \mathbb{T},$ and $h\in \mathbb{N}_{0},$
\begin{eqnarray} &&
s(X_{0}, x(t), h) = \frac{1}{\sigma_0^2}K_{h}\left(d_{\mathcal{M}}\left(X_{0}(t),x(t)\right)\right)\nonumber \\
&&  \hspace*{2.5cm} \times [\mu_2(x(t), h)  - \mu_1(x(t), h) d_{\mathcal{M}}\left(X(t), x(t)\right)] \nonumber \\
&&
\mu_j (x(t),h)= E\left[K_{h}\left(d_{\mathcal{M}}\left(X_{0}(t),x(t)\right)\right)
\left[d_{\mathcal{M}}\left(X_{0}(t),x(t)\right)\right]^{j}\right] \nonumber\\
&&
\sigma_0^2(x(t),h) = \mu_0(x(t), h)\mu_2(x(t), h)-[\mu_1(x(t), h)]^2. \label{glm}
\end{eqnarray}
   As before,  for each $h\in \mathbb{N}_{0},$  $K_{h}\left(d_{\mathcal{M}}\left( X_{0}(t), x(t)\right)\right)$ is a zonal function, with pole at $x(t),$  $t\in \mathbb{T},$ associated with the $h$th eigenspace of the Laplace Beltrami operator on $L^{2}(\mathcal{M}, d\nu ,\mathbb{R}),$ and
   evaluated at the time--dependent random argument  $X_{0}(t).$

The corresponding empirical version of (\ref{elfb}) is given by
\begin{eqnarray} &&
\widehat{Y}_{s,n}^{LL}(t)=\widehat{m}_{L,\oplus}(x(t),h) =  \arg \min_{\omega \in \mathcal{M}} \frac{1}{n}\sum_{i=1}^{n}
\frac{1}{\widehat{\sigma}_0^2}K_{h}\left(d_{\mathcal{M}}\left(X_{s_{i}}(t), x(t)\right)\right)\nonumber\\
&&\hspace*{3.5cm}\times
\left[\widehat{\mu}_2 (t)- \widehat{\mu}_1(t)d_{\mathcal{M}}\left(X_{s_{i}}(t),x(t)\right)\right]
d^{2}_{\mathcal{M}}\left(Y_{s_{i}}(t),\omega\right),\ t\in \mathbb{T},\nonumber\\
\label{empv}
\end{eqnarray}
\noindent  for each $h\in \mathbb{N}_{0},$ where, for each  $t\in \mathbb{T},$
\begin{eqnarray}
&& \widehat{\mu}_j (x(t),h)= \frac{1}{n}\sum_{i=1}^{n}K_{h}\left(d_{\mathcal{M}}\left( X_{s_{i}}(t), x(t)\right)\right)
\left[d_{\mathcal{M}}\left(X_{s_{i}}(t),x(t)\right)\right]^{j} \nonumber\\
&& \widehat{\sigma}_0^2(x(t),h) =\widehat{\mu}_0(x(t),h)\widehat{\mu}_2(x(t),h)-\widehat{\mu}_1^2(x(t),h). \nonumber
\end{eqnarray}

Optimality of (\ref{elfb})  is proved in Lemma \ref{lem1}  and Proposition \ref{pr1}  below, in a similar way to Lemma 1 of  the Supplementary Material of \cite{Petersen.19}, and Theorem 3  in \cite{Petersen.19}. The following additional conditions are required:

\vspace*{0.5cm}

\noindent (vi) For $j=0,1,2,$ and $\widetilde{x}\in \mathcal{M},$
\begin{equation}K_{h}^{(j)}(\widetilde{x})=\int_{\mathcal{M}}K_{h}\left(d_{\mathcal{M}}\left( x, \widetilde{x}\right)\right)[d_{\mathcal{M}}\left( x, \widetilde{x}\right)]^{j}d\nu(x)<\infty,\quad h\in \mathbb{N}_{0}.\label{eqvi}
\end{equation}

\medskip

\noindent (vii) For every $t\in \mathbb{T},$ the probability distribution of the $\mathcal{M}$--valued one-dimensional time projection $X_{0}(t)$  of the random curve  $X_{0}$ is characterized by a geodesically continuously differentiable probability density $f_{X_{0}(t)}.$ The conditional probability density $g_{X_{0}(t)/Y_{0}(t)=y_{t}}$ of $X_{0}(t)$ given $Y_{0}(t)=y_{t}$ exists, and it is twice geodesically continuously  differentiable, satisfying $\sup_{(x_{0}(t),y_{0}(t))\in \mathcal{X}_{\mathcal{C}_{\mathcal{M}}(\mathbb{T})}\times  \mathcal{Y}_{\mathcal{C}_{\mathcal{M}}(\mathbb{T})}}\left|g^{\prime \prime }_{X_{0}(t)/Y_{0}(t)=y_{0}(t)}(x_{0}(t))\right|< \infty.$   Also,   for  $j=0,1,$ and  $t\in \mathbb{T},$ the  local conditional moment
\begin{eqnarray}
\tau_{j}(y_{0}(t), x(t),h)&=&\int K_{h}\left(d_{\mathcal{M}}\left( x_{0}(t), x(t)\right)\right)\nonumber\\
& &\hspace*{-0.5cm} \times  [d_{\mathcal{M}}\left( x_{0}(t), x(t)\right)]^{j}
g_{X_{0}(t)/Y_{0}(t)=y_{0}(t)}(x_{0}(t))d\nu (x_{0}(t)),\ h\in \mathbb{N}_{0},\nonumber\\
\label{gclm}
\end{eqnarray}
\noindent is finite.

\begin{lemma}
\label{lem1}
Under assumptions  (i)-(vii), the following identities hold for the local moments introduced in equation (\ref{glm}): For every $t\in \mathbb{T},$  and for $j=0,1,2,$ as $h\to \infty,$
\begin{eqnarray}\mu_{j}(x(t),h)&=&[\mathcal{D}(K_{h})/2]^{j}\left[f_{X_{0}(t)}(x(t))K_{h}^{(j)}(x(t))+f_{X_{0}(t)}^{\prime }(x(t))K_{h}^{(j+1)}(x(t))\right.\nonumber\\ &&\hspace*{5.5cm}\left.+\mathcal{O}\left([\mathcal{D}(K_{h})/2]^{2}\right)\right],\label{glm2}
\end{eqnarray}
\noindent where, for any positive natural $l,$  $K_{h}^{(l)}$ has been introduced in equation (\ref{eqvi}), and
\begin{eqnarray}f_{X_{0}(s)}^{\prime }(x(s))=\lim_{t\to 0}\frac{f_{X_{0}(s)}(\exp_{x(s)}(tv))-f_{X_{0}(s)}(x(s))}{t},\nonumber\\
\label{gclm2d}\end{eqnarray}
\noindent for certain $v\in \mathcal{T}_{x(s)}\mathcal{M}.$  Here, $\mathcal{D}(K_{h})=\max_{x,y\in \mbox{Supp}(K_{h})}  d_{\mathcal{M}}(x,y),$ with $\mbox{Supp}(K_{h})$ denoting
the support of zonal function $K_{h}$ centered at $x(t)$ for each $t\in \mathbb{T}.$

 The conditional local moments in (\ref{gclm}) satisfy for $j=0,1,$  and for every $t\in \mathbb{T},$ as $h\to \infty,$
 \begin{eqnarray}
\tau_{j}(y_{0}(t),  x(t), h)&=& [\mathcal{D}(K_{h})/2]^{j}\left[g_{X_{0}(t)/Y_{0}(t)=y_{0}(t)}(x(t))K_{h}^{(j)}(x(t))\right.\nonumber\\ &&\hspace*{0.5cm}\left.+g_{X_{0}(t)/Y_{0}(t)=y_{0}(t)}^{\prime }(x(t))K_{h}^{(j+1)}(x(t))+\mathcal{O}\left([\mathcal{D}(K_{h})/2]^{2}\right)\right].\nonumber\\ \label{gclm2}
\end{eqnarray}

\noindent with \begin{eqnarray}&&g_{X_{0}(s)/Y_{0}(s)= y_{0}(s)}^{\prime }(x(s))\nonumber\\
&&=\lim_{t\to 0}\frac{g_{X_{0}(s)/Y_{0}(s)=y_{0}(s)}^{\prime }(\exp_{x(s)}(tv))-g_{X_{0}(s)/Y_{0}(s)=y_{0}(s)}^{\prime }(x(s))}{t},
\nonumber\\
\label{gclm3}
\end{eqnarray}
\noindent for certain $v\in \mathcal{T}_{x(s)}\mathcal{M}.$
\end{lemma}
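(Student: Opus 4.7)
The plan is to perform a local Taylor expansion of the densities $f_{X_{0}(t)}$ and $g_{X_{0}(t)/Y_{0}(t)=y_{0}(t)}$ around the point $x(t)$ inside the exponential chart of $\mathcal{M}$, substitute into the defining integrals of $\mu_{j}$ and $\tau_{j}$, and recognise the resulting radial moments as the quantities $K_{h}^{(\ell)}$ introduced in assumption (vi). This directly extends Lemma~1 of the Supplementary Material of \cite{Petersen.19} from the Euclidean to the Riemannian setting.

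First, I would rewrite $\mu_{j}(x(t),h)$ as an integral against the Riemannian volume,
\begin{equation*}
\mu_{j}(x(t),h)=\int_{\mathcal{M}}K_{h}\!\left(d_{\mathcal{M}}\!\left(x_{0},x(t)\right)\right)\left[d_{\mathcal{M}}\!\left(x_{0},x(t)\right)\right]^{j}f_{X_{0}(t)}(x_{0})\,d\nu(x_{0}),
\end{equation*}
and note that, by condition (v) and the compactness of $\mbox{Supp}(K_{h})$, the effective integration domain is contained in the injectivity ball centred at $x(t)$. I would then pull back via $\exp_{x(t)}^{-1}$ so as to work in normal coordinates, using Remark \ref{rem1} to control the comparison between geodesic and Euclidean distances under (i)-(ii).

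Next, exploiting the geodesic differentiability granted by (vii), I would write the radial Taylor expansion along the unique geodesic from $x(t)$ to $x_{0}$,
\begin{equation*}
f_{X_{0}(t)}(x_{0})=f_{X_{0}(t)}(x(t))+d_{\mathcal{M}}\!\left(x_{0},x(t)\right)f'_{X_{0}(t)}(x(t))+\mathcal{O}\!\left(d_{\mathcal{M}}^{2}(x_{0},x(t))\right),
\end{equation*}
with remainder uniform on $\mbox{Supp}(K_{h})$ by the smoothness assumed in (vii). Substituting into the integral and grouping terms, the isotropy of $K_{h}$ makes the purely tangential first-order contributions vanish, and the remaining radial pieces are identified with $f_{X_{0}(t)}(x(t))K_{h}^{(j)}(x(t))$ and $f'_{X_{0}(t)}(x(t))K_{h}^{(j+1)}(x(t))$. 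The scale factor $[\mathcal{D}(K_{h})/2]^{j}$ emerges from rescaling the radial variable by the support radius of $K_{h}$, mirroring bandwidth scaling in Euclidean kernel smoothing. For the conditional moment $\tau_{j}(y_{0}(t),x(t),h)$, the argument is identical after replacing $f_{X_{0}(t)}$ by $g_{X_{0}(t)/Y_{0}(t)=y_{0}(t)}$; assumption (vii) provides both the radial derivative (\ref{gclm3}) and a uniform bound on the second radial derivative, which is precisely what is needed to absorb the remainder into $\mathcal{O}([\mathcal{D}(K_{h})/2]^{2})$.

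The main obstacle I expect is the manifold Taylor expansion itself: because $\mathcal{M}$ lacks a global translation structure, $f'_{X_{0}(t)}(x(t))$ is a \emph{radial} directional derivative rather than a gradient, and one must justify that the angular average of the linear correction under the isotropic kernel $K_{h}$ reduces exactly to the scalar radial contribution appearing in (\ref{glm2}). This requires the invariance of $K_{h}$ with respect to the isometries fixing $x(t)$, together with the fact that, under (ii), the volume element in normal coordinates differs from Lebesgue measure only by curvature terms of order $d_{\mathcal{M}}^{2}$, which can be absorbed into the $\mathcal{O}([\mathcal{D}(K_{h})/2]^{2})$ remainder. Once these two geometric ingredients are in place, the identities (\ref{glm2}) and (\ref{gclm2}) follow from direct substitution and the definitions in (\ref{eqvi}).
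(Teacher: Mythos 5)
Your proposal follows essentially the same route as the paper's own proof: a first-order Taylor expansion of $f_{X_{0}(t)}$ and of the conditional density $g_{X_{0}(t)/Y_{0}(t)=y_{0}(t)}$ along the geodesic through the evaluation point, with remainder of order $[\mathcal{D}(K_{h})/2]^{2}$, substituted into the defining integrals of $\mu_{j}$ and $\tau_{j}$ and identified with the kernel moments $K_{h}^{(j)}$ and $K_{h}^{(j+1)}$ of assumption (vi). You actually supply more supporting detail (normal-coordinate pull-back, isotropy of $K_{h}$, the volume-element correction, and the origin of the $[\mathcal{D}(K_{h})/2]^{j}$ prefactor) than the paper, whose proof consists only of the two geodesic expansions followed by direct substitution.
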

\begin{remark}
Note that under condition (v), the derivatives in (\ref{gclm2d}) and (\ref{gclm3})  can be diffeomorphically computed   along a geodesic with initial location $x(s)$ and velocity $v,$  since, in particular,  $\exp_{x(s)}(tv),$ $t\in \mathbb{T},$  lies in the support of $dP_{X_{0}}.$
\end{remark}
\begin{proof}
Under conditions (vi)-(vii), for each $t\in \mathbb{T},$    one can consider  the following first order local approximations at point $x_{0}(t),$  along the geodesic $\exp_{x_{0}(t)}(sv),$
$s\in [0,1],$ of  $f_{X_{0}}(x)$  and $g_{X_{0}(t)/Y_{0}(t)= y_{0}(t)}(x),$
\begin{eqnarray}&&
f_{X_{0}}(x)=f_{X_{0}}(x_{0}(t))+d_{\mathcal{M}}\left( x, x_{0}(t)\right)f^{\prime }(x_{0}(t))+\mathcal{O}\left([\mathcal{D}(K_{h})/2]^{2}\right)\nonumber\\
&& g_{X_{0}(t)/Y_{0}(t)= y_{0}(t)}(x)=g_{X_{0}(t)/Y_{0}(t)= y_{0}(t)}(x_{0}(t))\nonumber\\ &&\hspace*{1.5cm}+d_{\mathcal{M}}\left( x, x_{0}(t)\right)g_{X_{0}(t)/Y_{0}(t)= y_{0}(t)}^{\prime }(x_{0}(t))+\mathcal{O}\left([\mathcal{D}(K_{h})/2]^{2}\right),\nonumber\\\label{gclm3b}
\end{eqnarray}
\noindent  for any  $x\in \mbox{Supp}(K_{h}),$  leading to
equations (\ref{glm2}) and (\ref{gclm2}), respectively, as $h\to \infty.$
\end{proof}
The following result provides the asymptotic optimality of the intrinsic local linear Fr\'echet curve predictor (\ref{elfb}).
\begin{proposition}
\label{pr1}
Under conditions (i)-(vii), the following identity holds
as $h\to \infty$
\begin{eqnarray} &&
\int d_{\mathcal{M}}^{2}(y_{0}(t),\omega)s(z_{0}(t),x(t),h) dF_{X_{0},Y_{0}}(z_{0}(t),y_{0}(t))
\nonumber\\ &&=\int d_{\mathcal{M}}^{2}(y_{0}(t),\omega)dF_{Y_{0}/X_{0}}(x(t),y_{0}(t))+\mathcal{O}\left([\mathcal{D}(K_{h})/2]^{2}\right),
\label{weihtmeanapcpf}
\end{eqnarray}
\noindent where $s(z_{0}(t),x(t),h)$ has been introduced in equation (\ref{glm}).
\end{proposition}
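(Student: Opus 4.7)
My strategy is to reduce the identity (\ref{weihtmeanapcpf}) to the asymptotic expansions of Lemma \ref{lem1} followed by an application of Bayes' rule. First, I would factor the joint law via the conditional density granted by assumption (vii) as $dF_{X_{0}(t),Y_{0}(t)}(z_{0}(t),y_{0}(t))=g_{X_{0}(t)/Y_{0}(t)=y_{0}(t)}(z_{0}(t))\,d\nu(z_{0}(t))\,dF_{Y_{0}(t)}(y_{0}(t))$ and apply Fubini. Substituting the explicit form (\ref{glm}) of the non--linear weight $s$ and recognising the definition (\ref{gclm}) of $\tau_{0},\tau_{1}$ in the inner integral, the left--hand side of (\ref{weihtmeanapcpf}) becomes
$$\int d_{\mathcal{M}}^{2}(y_{0}(t),\omega)\,\frac{\mu_{2}(x(t),h)\tau_{0}(y_{0}(t),x(t),h)-\mu_{1}(x(t),h)\tau_{1}(y_{0}(t),x(t),h)}{\sigma_{0}^{2}(x(t),h)}\,dF_{Y_{0}(t)}(y_{0}(t)).$$

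Next, writing $\delta_{h}:=\mathcal{D}(K_{h})/2$, I would insert the Lemma \ref{lem1} expansions of $\mu_{j}$ (driven by $f_{X_{0}(t)}(x(t))$) and of $\tau_{j}$ (driven by $g_{X_{0}(t)/Y_{0}(t)=y_{0}(t)}(x(t))$) into both the numerator and into $\sigma_{0}^{2}=\mu_{0}\mu_{2}-\mu_{1}^{2}$. Collecting powers of $\delta_{h}$, the leading contribution to the numerator factors as $\delta_{h}^{2}\,f_{X_{0}(t)}(x(t))\,g_{X_{0}(t)/Y_{0}(t)=y_{0}(t)}(x(t))\,[K_{h}^{(0)}(x(t))K_{h}^{(2)}(x(t))-(K_{h}^{(1)}(x(t)))^{2}]$, while the leading contribution to $\sigma_{0}^{2}$ factors as $\delta_{h}^{2}\,[f_{X_{0}(t)}(x(t))]^{2}\,[K_{h}^{(0)}(x(t))K_{h}^{(2)}(x(t))-(K_{h}^{(1)}(x(t)))^{2}]$. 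The common kernel factor then cancels, and the ratio reduces to
$$\frac{\mu_{2}(x(t),h)\tau_{0}(y_{0}(t),x(t),h)-\mu_{1}(x(t),h)\tau_{1}(y_{0}(t),x(t),h)}{\sigma_{0}^{2}(x(t),h)}=\frac{g_{X_{0}(t)/Y_{0}(t)=y_{0}(t)}(x(t))}{f_{X_{0}(t)}(x(t))}+\mathcal{O}(\delta_{h}^{2}).$$

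Finally, Bayes' identity $g_{X_{0}(t)/Y_{0}(t)=y_{0}(t)}(x(t))\,dF_{Y_{0}(t)}(y_{0}(t))/f_{X_{0}(t)}(x(t))=dF_{Y_{0}(t)/X_{0}(t)=x(t)}(y_{0}(t))$ converts the outer integration against $dF_{Y_{0}(t)}$ into integration against the conditional law $dF_{Y_{0}(t)/X_{0}(t)=x(t)}$, which exactly reproduces the right--hand side of (\ref{weihtmeanapcpf}) up to the claimed $\mathcal{O}(\delta_{h}^{2})$ remainder.

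The main obstacle is the bookkeeping of the remainders in the quotient. I would need the Cauchy--Schwarz--type non--degeneracy $K_{h}^{(0)}(x(t))K_{h}^{(2)}(x(t))-(K_{h}^{(1)}(x(t)))^{2}\geq c>0$, which follows from (vi) whenever the invariant kernel $K_{h}$ is not concentrated on a single geodesic sphere around $x(t)$, so that the reciprocal of $\sigma_{0}^{2}$ admits a Neumann--type expansion preserving the stated order of error; one must also verify that first--order cross terms in $f'$ and $g'$ either cancel (by symmetry of the invariant kernel about its pole, which forces odd kernel moments to vanish) or are absorbed into the $\mathcal{O}(\delta_{h}^{2})$ tail after division by $\sigma_{0}^{2}$. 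Additionally, the uniform bound on $g''_{X_{0}(t)/Y_{0}(t)=y_{0}(t)}$ postulated in (vii) is required so that the $\mathcal{O}(\delta_{h}^{2})$ corrections in $\tau_{0},\tau_{1}$ are uniform in $y_{0}(t)\in\mathcal{Y}_{\mathcal{C}_{\mathcal{M}}(\mathcal{T})}$, which allows the remainder to be pulled through the outer integration against $dF_{Y_{0}(t)}$ to conclude.
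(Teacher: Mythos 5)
Your proposal follows essentially the same route as the paper's proof: disintegrate the joint law of $(X_{0}(t),Y_{0}(t))$ through the conditional density from (vii), recognise the inner integral of the weight $s$ as $[\mu_{2}\tau_{0}-\mu_{1}\tau_{1}]/\sigma_{0}^{2}$, apply Lemma \ref{lem1} to reduce this ratio to $g_{X_{0}(t)/Y_{0}(t)=y_{0}(t)}(x(t))/f_{X_{0}(t)}(x(t))+\mathcal{O}([\mathcal{D}(K_{h})/2]^{2})$, and finish with Bayes' rule. You in fact supply more detail than the paper does at the middle step (the paper simply invokes Lemma \ref{lem1} and the analogy with Theorem 3 of Petersen and M\"uller), and your bookkeeping of the leading and cross terms is consistent with what that step requires.
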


\begin{proof}

Applying Lemma \ref{lem1}, in a similar way to the proof of Theorem 3 in the Supplementary material of \cite{Petersen.19},
\begin{eqnarray} &&
\int d_{\mathcal{M}}^{2}(y_{0}(t),\omega)s(z_{0}(t),x(t),h) dF_{X_{0},Y_{0}}(z_{0}(t),y_{0}(t))
\nonumber\\ &&=\int d_{\mathcal{M}}^{2}(y_{0}(t),\omega)s(z_{0}(t),x(t),h) dF_{X_{0}(t)/Y_{0}(t)}(z_{0}(t)/y_{0}(t))dF_{Y_{0}(t)}(y_{0}(t))\nonumber\\
 &&=\int d_{\mathcal{M}}^{2}(y_{0}(t),\omega)\left[\frac{\mu_{2} (x(t),h)\tau_{0}(y_{0}(t),  x(t),h)}{\sigma_{0}^{2}(x(t),h) }\right.\nonumber\\
 &&\hspace*{4.5cm}\left.-\frac{\mu_{1}(x(t),h)\tau_{1}(y_{0}(t),  x(t),h)}{\sigma_{0}^{2}(x(t),h) }\right]
dF_{Y_{0}(t)}(y_{0}(t))\nonumber\\
&&=\int d_{\mathcal{M}}^{2}(y_{0}(t),\omega)\frac{ g_{X_{0}(t)/Y_{0}(t)= y_{0}(t)}(x(t))}{f_{X_{0}}(x(t))}dF_{Y_{0}(t)}(y_{0}(t))
+\mathcal{O}\left([\mathcal{D}(K_{h})/2]^{2}\right)\nonumber\\
&&=\int d_{\mathcal{M}}^{2}(y_{0}(t),\omega )dF_{Y_{0}(t)/X_{0}(t)}(y_{0}(t)/x(t))+\mathcal{O}\left([\mathcal{D}(K_{h})/2]^{2}\right),
\label{weihtmeanapcpf2}
\end{eqnarray}
\noindent where $\mathcal{D}(K_{h})\to 0,$ $h\to \infty.$
\end{proof}
\section{Simulation study}
\label{simstud}
We restrict here our attention to the sphere $\mathcal{M}=\mathbb{S}_2\subset  \mathbb{R}^3.$  The simulation algorithm introduced in  \cite{Torresetal2024} is implemented to generate a  time correlated  bivariate curve sample of size  $n=100 $ evaluated   in $\mathbb{S}_2.$   This algorithm pointwise applies the von Mises-Fisher  inverse transform to  the normalized,  by the  supremum norm, trajectories of
a  diffusion processes driven by vector Brownian motion with correlated components. The   regressor curve observations at 1000 temporal nodes
are displayed in Figure \ref{Fig:1.1},  for times  $s=10,20,30,40,50,60,70,80,90,100.$
\begin{figure}[!h]
\begin{center}
\includegraphics[width=0.325\textwidth=0.35]{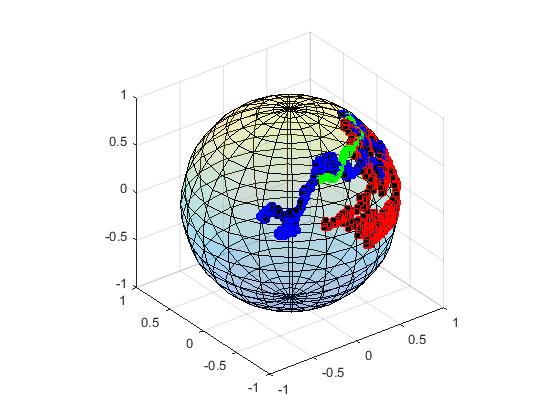}
\includegraphics[width=0.325\textwidth=0.35]{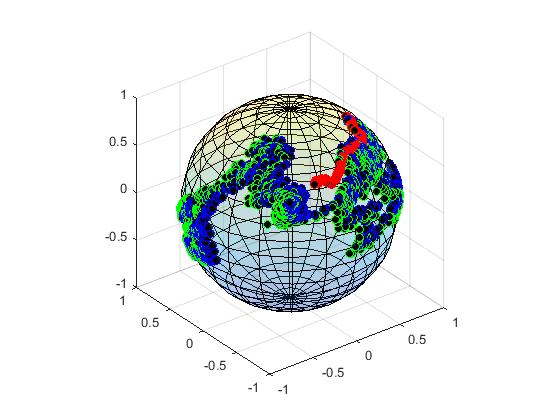}
\includegraphics[width=0.325\textwidth=0.35]{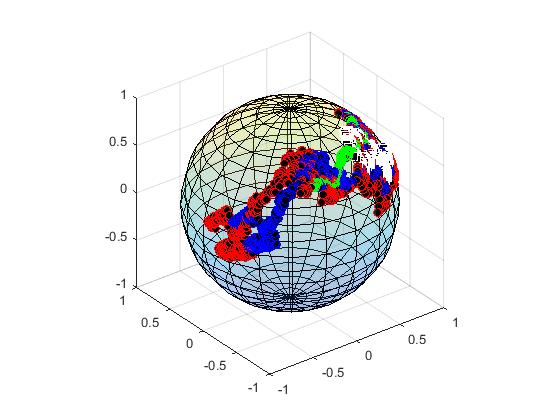}
\end{center}
\caption{Spherical curve regressor observations. Left--hand side plot,  times $s=$10 (red),  20 (green),  and 30 (blue).  Center plot, time $s=40$  (red), $50$  (green), $60$  (blue), and right--hand side plot, time $s= 70$ (red), $80$ (green), $90$ (blue), $100$ (cyan)}\label{Fig:1.1}
\end{figure}
To compute the empirical  Fr\'echet curve mean $\widehat{\mu}_{X_{0},\mathcal{M}}$ (see right--hand side of Figure \ref{Fig:1.2}), an uniform spherical grid with  $20000\times 20000$   nodes is generated over a   region containing the  support of the regressor marginal probability measure (see left-hand side of Figure \ref{Fig:1.2}).
\begin{figure}[!h]
\begin{center}
\includegraphics[width=0.35\textwidth]{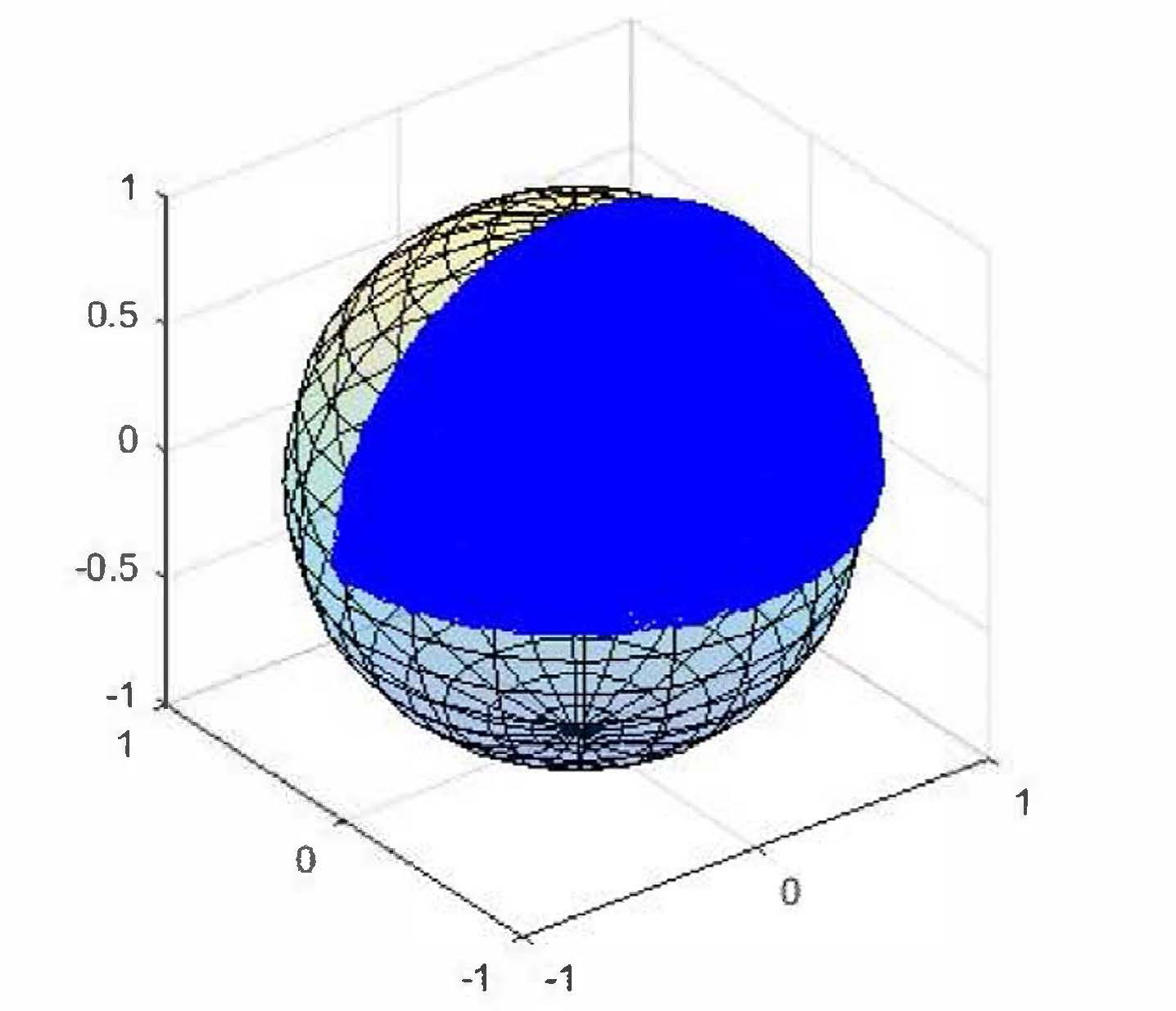}
\includegraphics[width=0.45\textwidth]{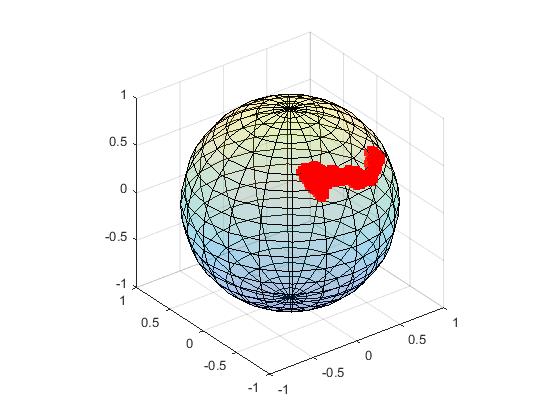}
 \end{center}
\caption{Localized uniform grid of $20000\times 20000$ nodes at the left--hand side, and empirical  Fr\'echet curve mean at the right--hand side}\label{Fig:1.2}
\end{figure}
The regressor curve sample is log--mapped into the time--varying tangent space by applying the  time--varying  logarithm map with origin  at $\widehat{\mu}_{X_{0},\mathcal{M}}.$ That is, we compute
\begin{eqnarray}&&\log_{\mu_{X_{0},\mathcal{M}}(t)}(X_{s_{i}}(t))=\frac{u(t,i)}{\|u(t,i)\|}d_{\mathbb{S}_{d}}
  \left(\mu_{X_{0},\mathcal{M}}(t),X_{s_{i}}(t)\right)\nonumber\\ &&u(t,i)=X_{s_{i}}(t)-([\mu_{X_{0},\mathcal{M}}(t)]^{T}X_{s_{i}}(t))\mu_{X_{0},\mathcal{M}}(t),\quad t\in \mathbb{T}.\label{lms}\end{eqnarray}

 \noindent  The  curve values of the  response  process are then obtained as
$$ Y_{s_{i}}(t)= \exp_{\mu_{X_{0},\mathcal{M}}(t)}\left(\mathbf{\Gamma }\left(\log_{\mu_{X_{0},\mathcal{M}}(t)}\left(X_{s_{i}}\right)\right)(t)+\varepsilon_{s_{i}}(t)\right),\quad t\in \mathbb{T},$$
\noindent for  $i=1,\dots,n,$ where $\mathbf{\Gamma }:\mathbb{H}\to \mathbb{H}$ is a bounded linear operator, whose supremum  norm is  less than one, and $\left\{\varepsilon_{i}(\cdot),\ i\in \mathbb{Z}\right\}$  defines an $\mathbb{H}$--valued Gaussian  strong white noise, uncorrelated with the log--mapped regressors (see Figure \ref{Fig:1.3}).
\begin{figure}[!h]
\begin{center}
\includegraphics[width=0.325\textwidth=0.35]{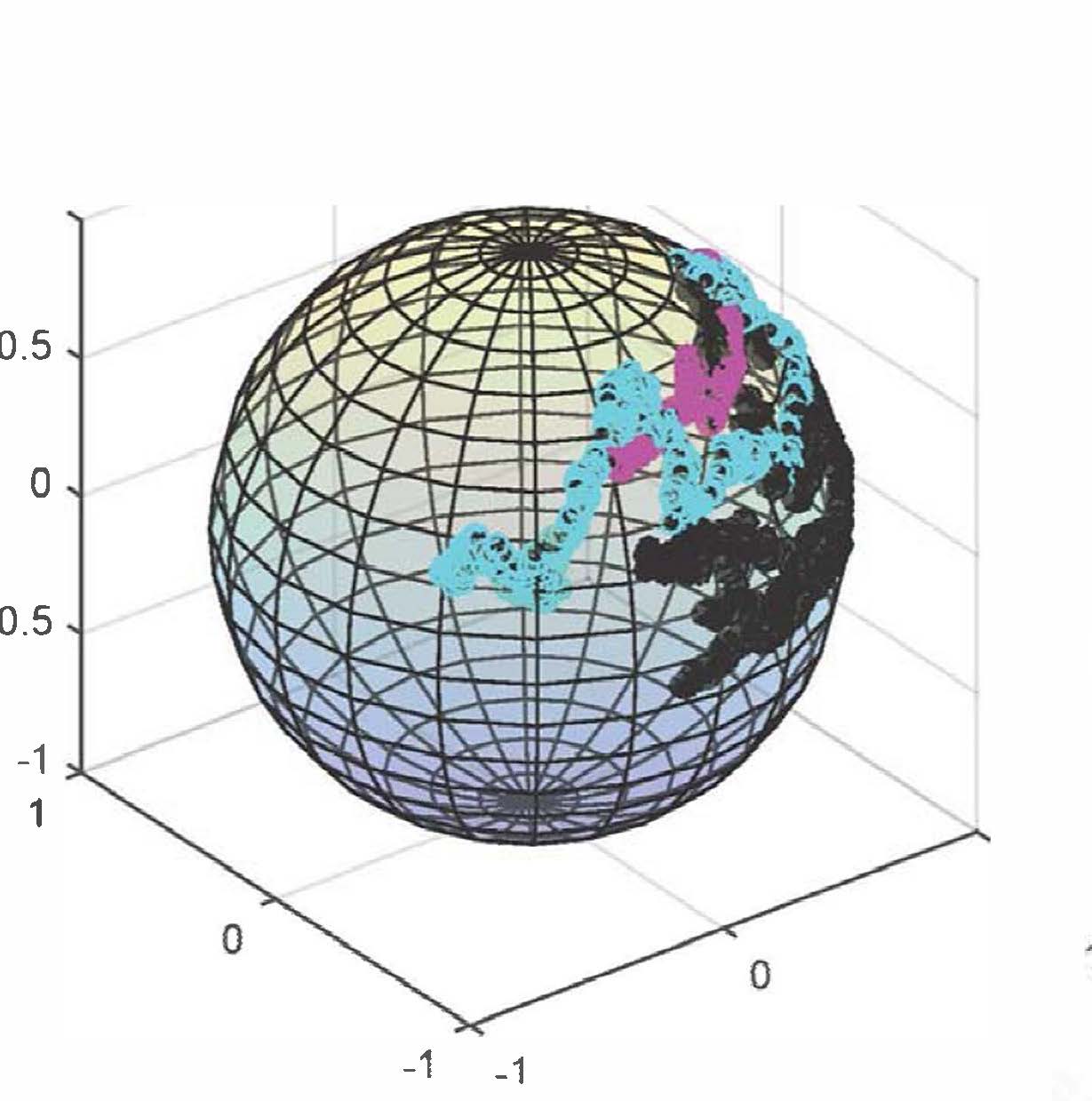}
\includegraphics[width=0.325\textwidth=0.35]{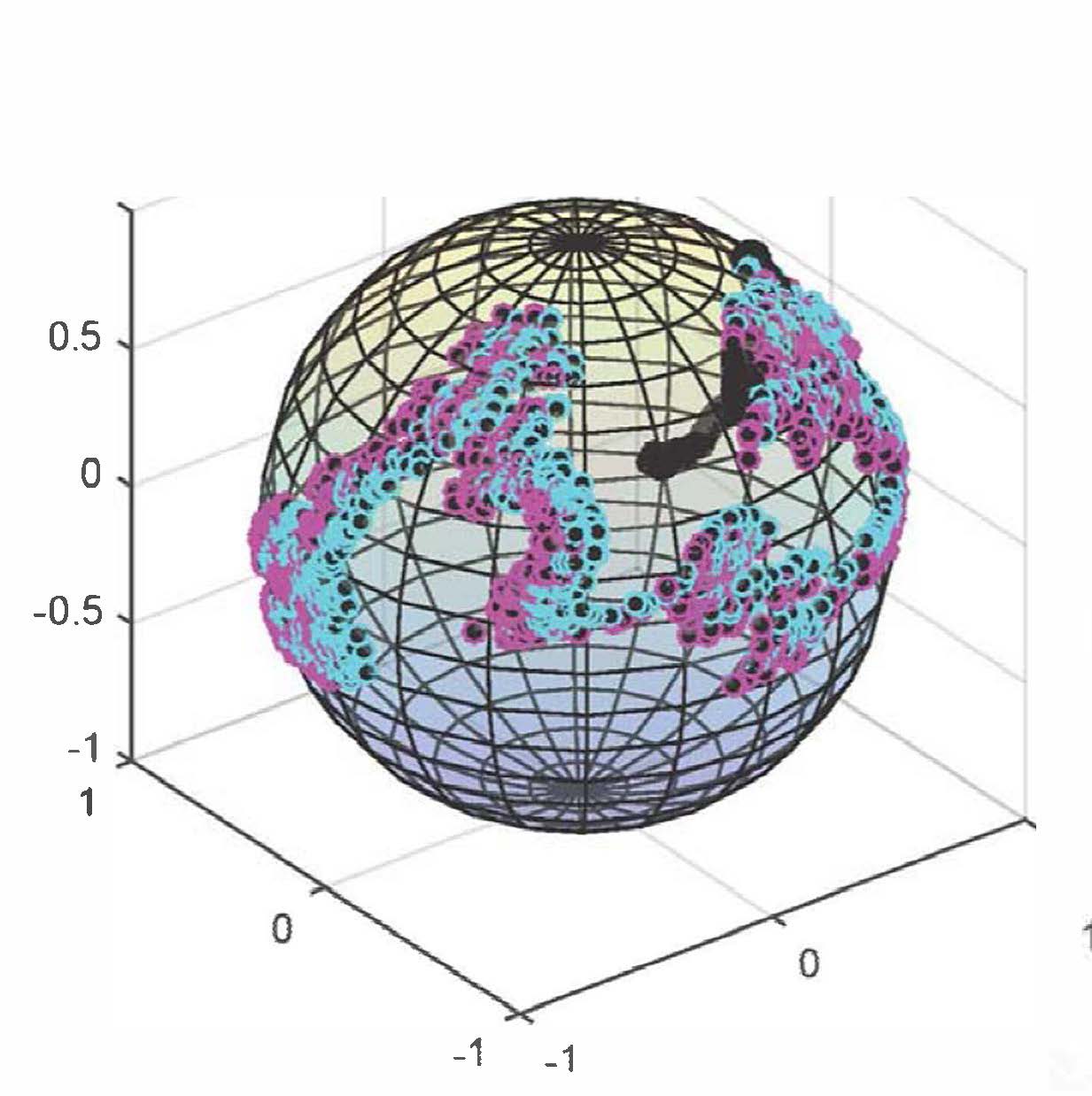}
\includegraphics[width=0.325\textwidth=0.35]{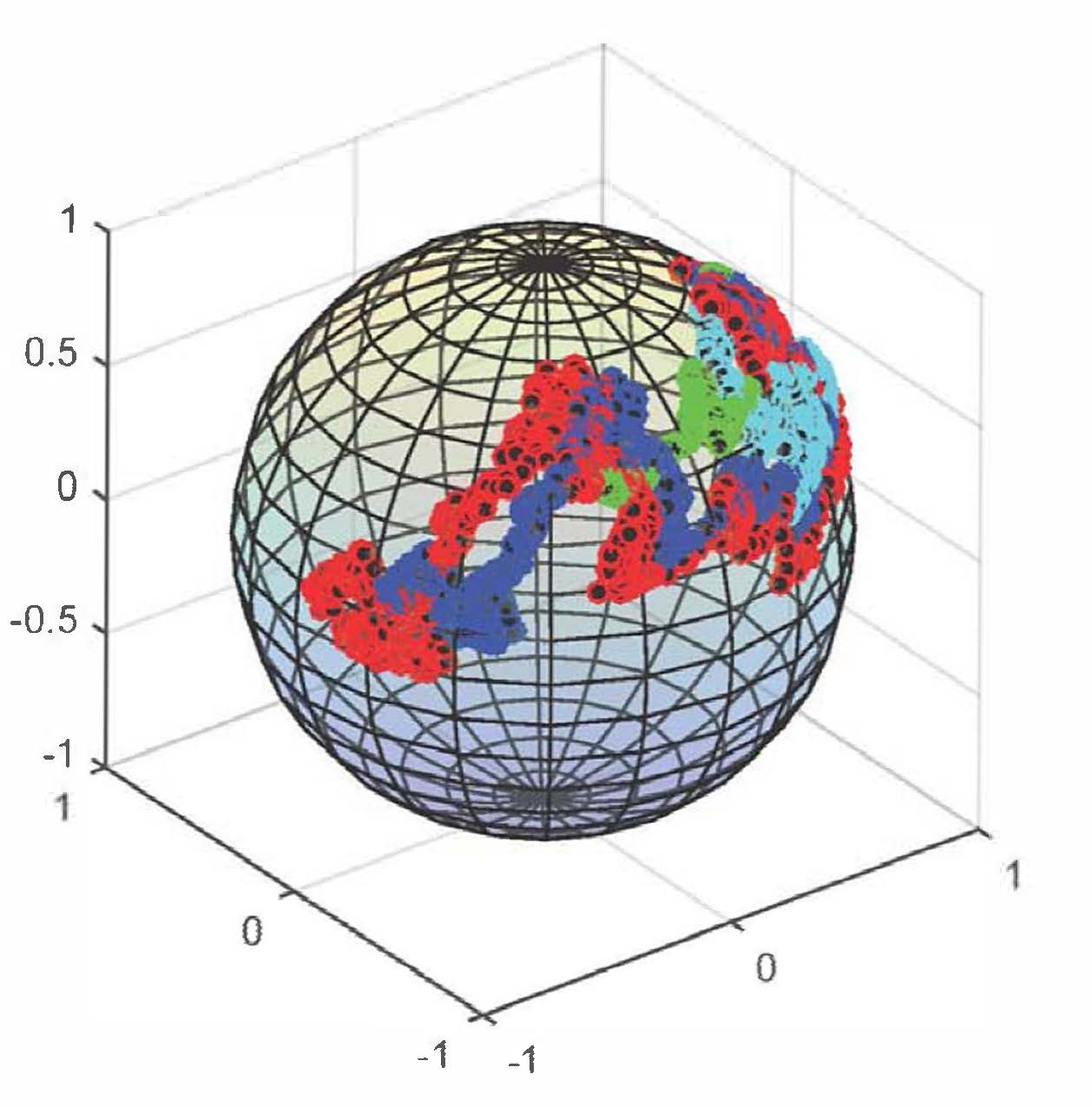}
\end{center}
\caption{Spherical curve response  observations generated at times $s=10$ (black), $20$ (pink), $30$ (cyan) for the left-hand-side plot, $s=40$ (black), $50$ (pink), 60 (cyan) for the center plot, $70$ (red), $80$ (green), $90$ (blue), $100$  (cyan) for the right-hand-side plot}\label{Fig:1.3}
\end{figure}

The sphere is a compact Riemannian manifold, where conditions (i)-(ii) hold. The sample path regularity and ergodicity of the underlying vector diffusion process ensure conditions (iii)-(iv) are satisfied by the generated regressor and response processes.  Correlation between the  components of the  vector driven process of such a diffusion process, and the selected high value of the concentration parameter, characterizing  inverse von Mises-Fisher transform, induce curve data concentration around the Fr\'echet curve mean, as required  in condition (v).

The  data set generated  displays stronger inter-curve  correlations than the real-data set analyzed in Section \ref{rda}. Note that the regressor curve process, and its functional linear transformation in the time-varying tangent space, defining the log-mapped response process,  inherit the underlying  temporal correlation of the transformed vector diffusion process, amplified by the von Mises-Fisher  concentration parameter.

 The empirical NW-type Fr\'echet curve predictor (\ref{NWF}) is first computed, considering bandwidth parameter values in the interval $( 0.3162,
 0.6310),$ corresponding to
   $[h(n)]^{-1}=n^{-\beta},$ $\beta=0.1000,    0.1375,    0.1750,    0.2125,    0.2500.$ For $\beta= 0.2500,$ i.e., $[h(n)]^{-1}=[0.3162]_{-},$ and
   $[h(n)]_{+}=4,$
   NW-type Fr\'echet curve values  are displayed in Figure \ref{Fig:1.4}  (red color),
at times  $s=10, 20, 30, 40, 50, 60, 70, 80, 90$, where the  corresponding response curve values (black color) are also represented for comparison.
\begin{figure}[!h]
\begin{center}
\includegraphics[trim= 100 280 100 220, clip, width=0.32\textwidth]{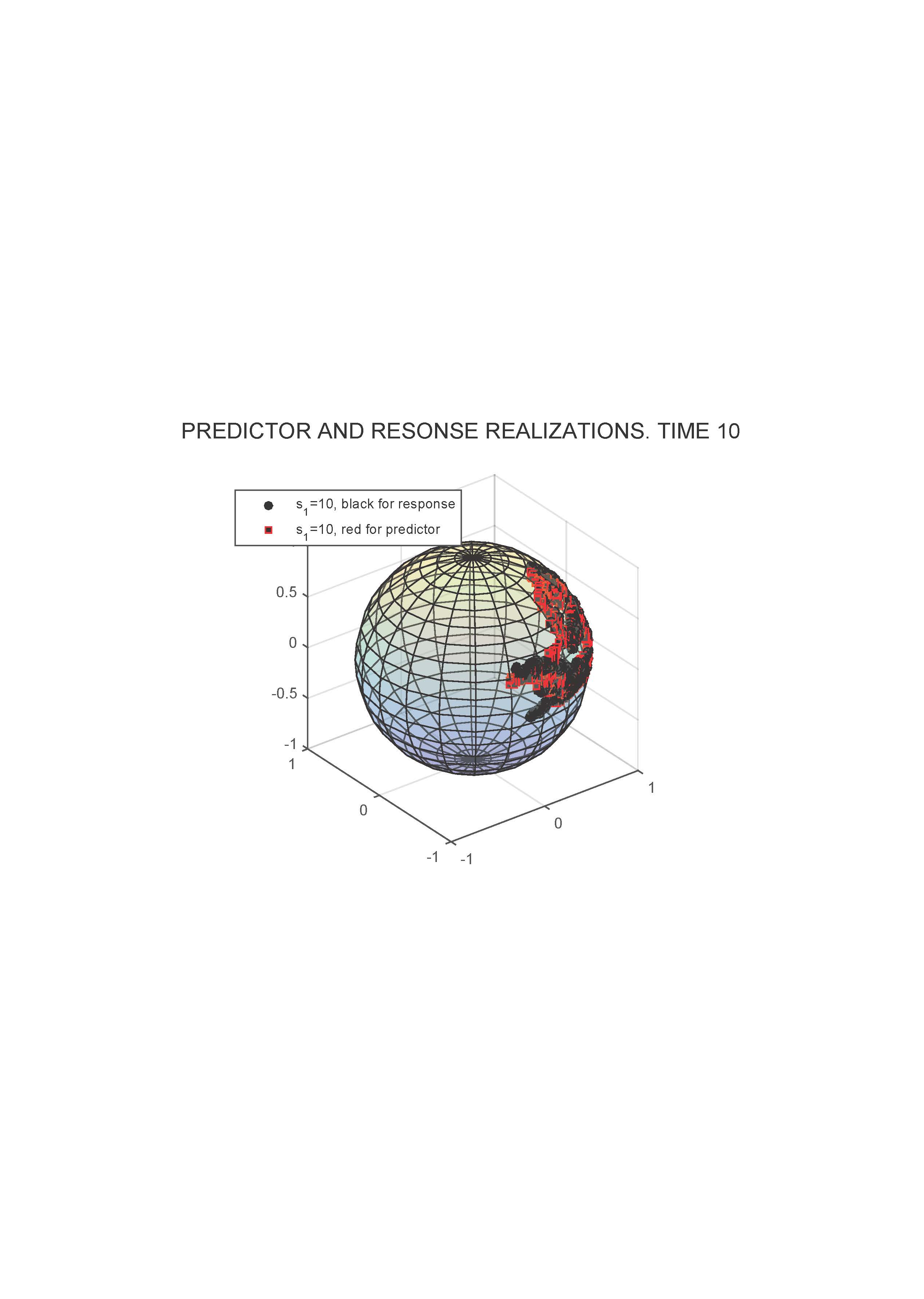}
\includegraphics[trim= 100 280 100 220, clip, width=0.32\textwidth]{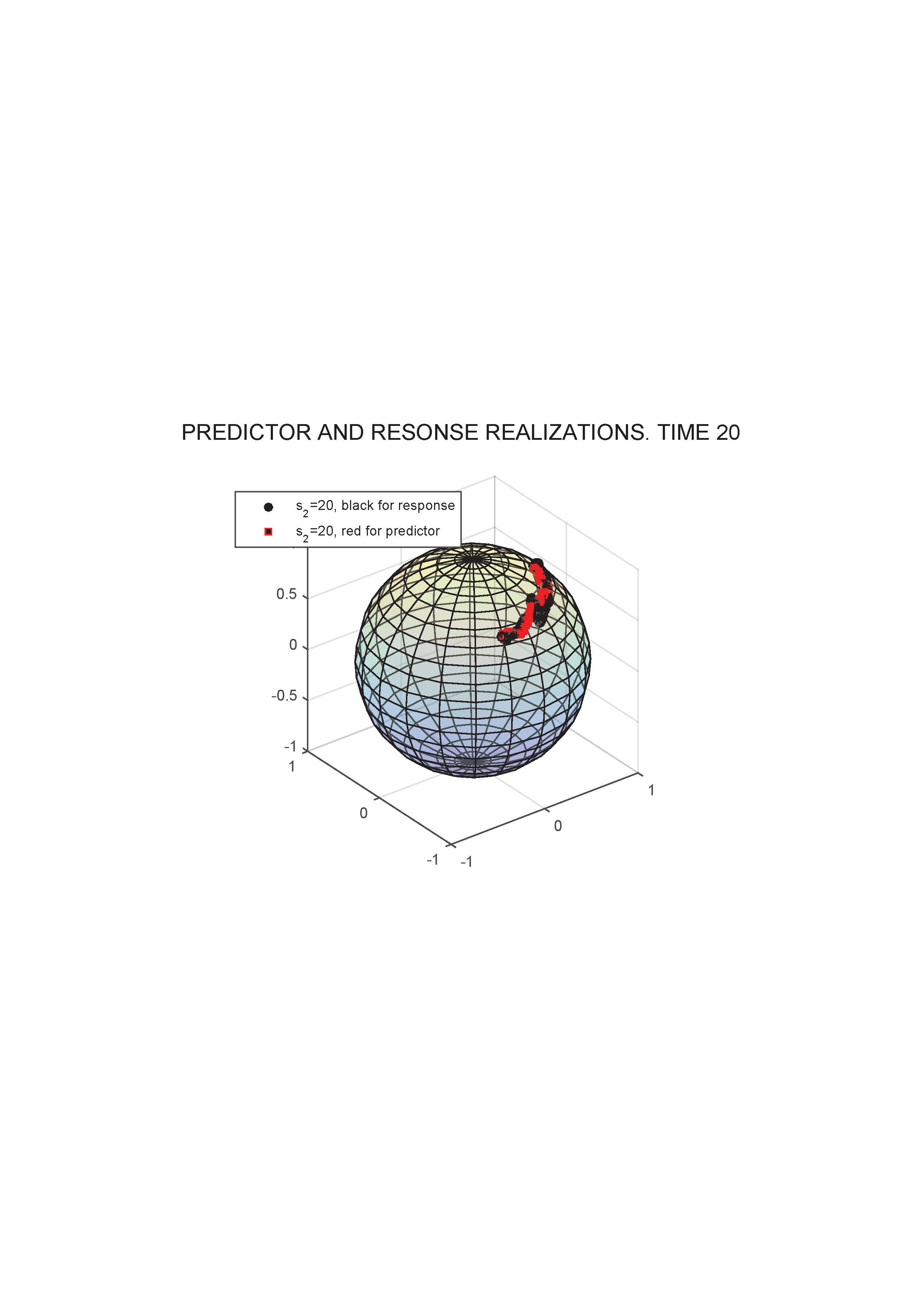}
\includegraphics[trim= 100 280 100 220, clip, width=0.32\textwidth]{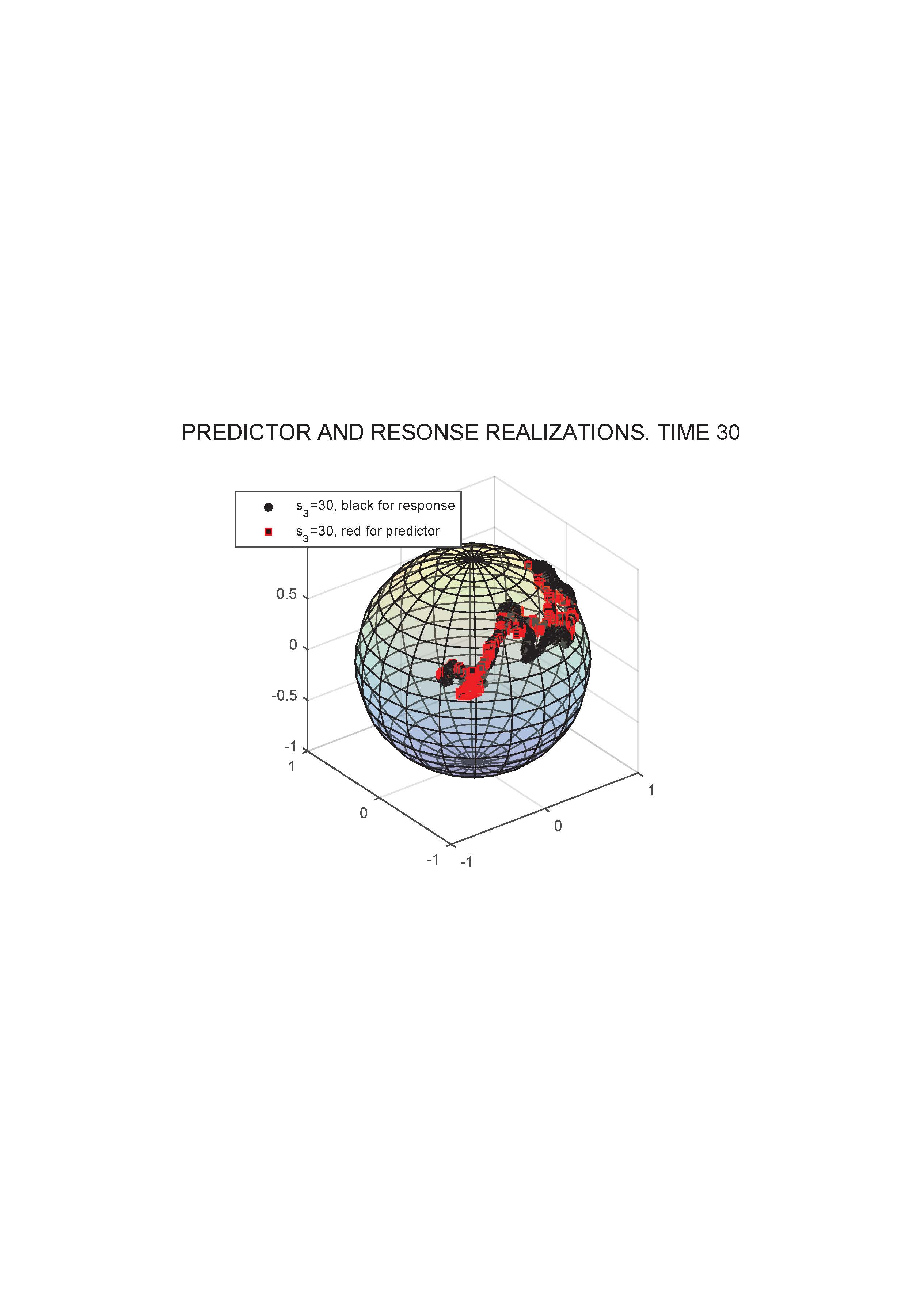}
\includegraphics[trim= 100 280 100 220, clip, width=0.32\textwidth]{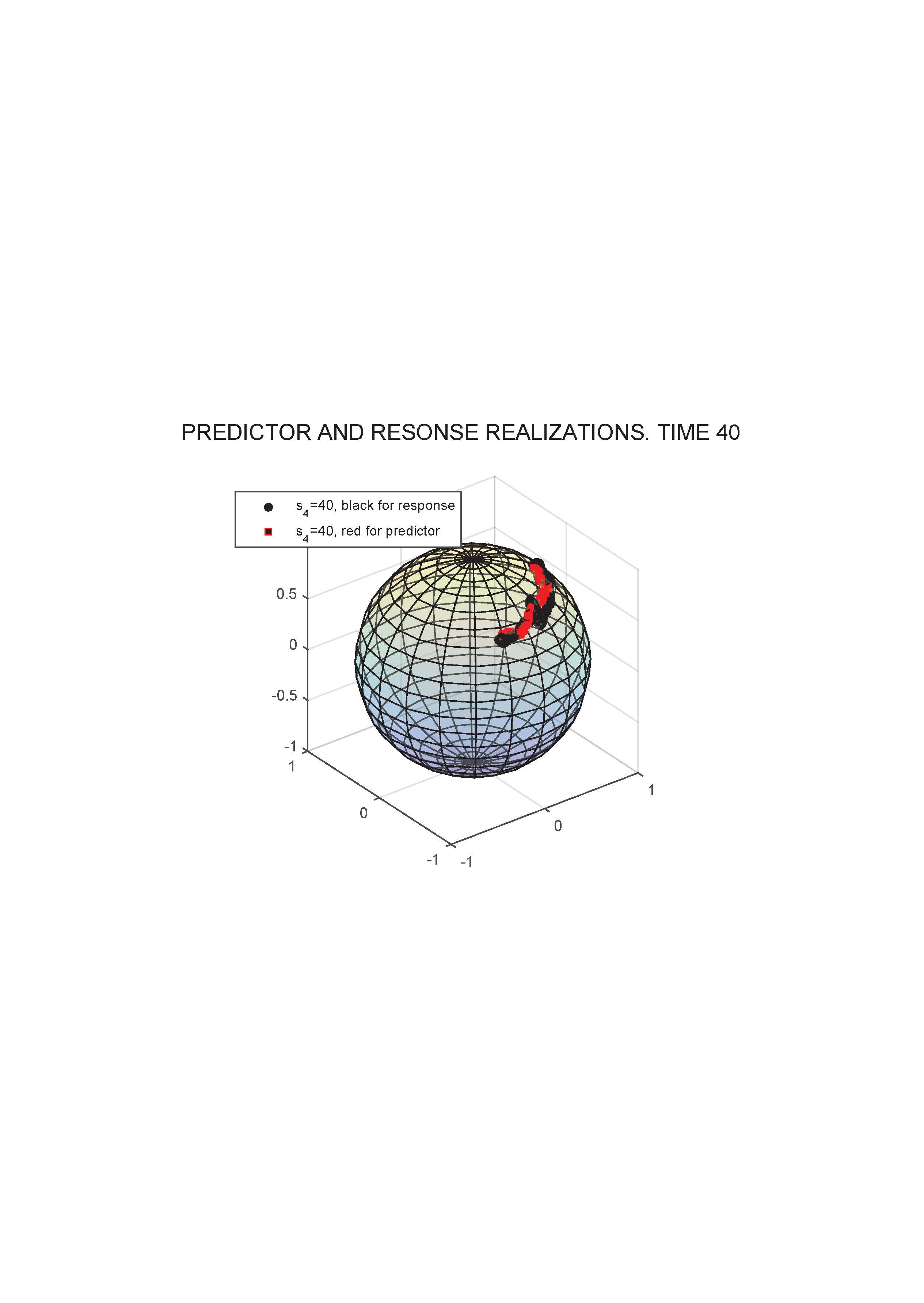}
\includegraphics[trim= 100 280 100 220, clip, width=0.32\textwidth]{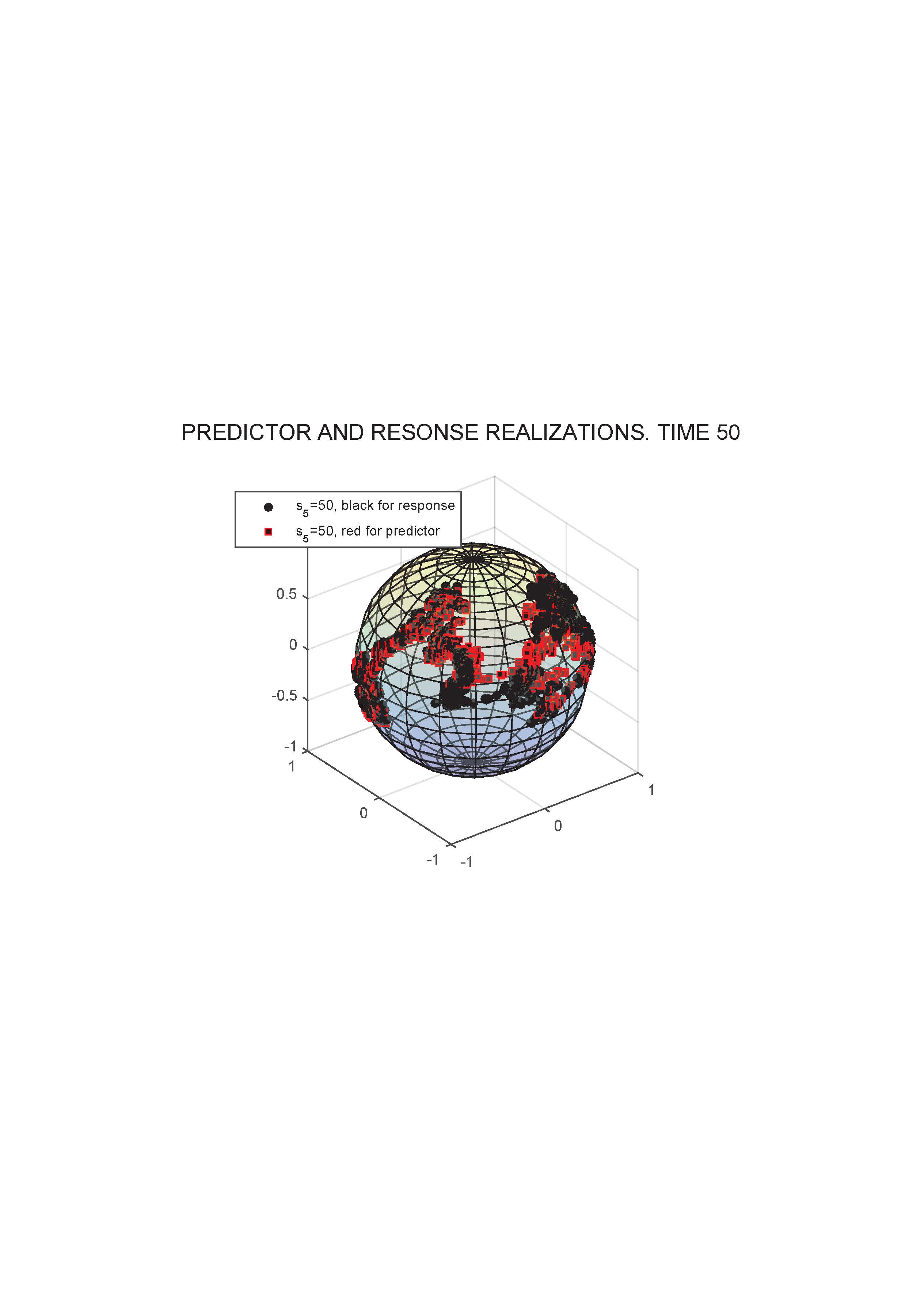}
\includegraphics[trim= 100 280 100 220, clip, width=0.32\textwidth]{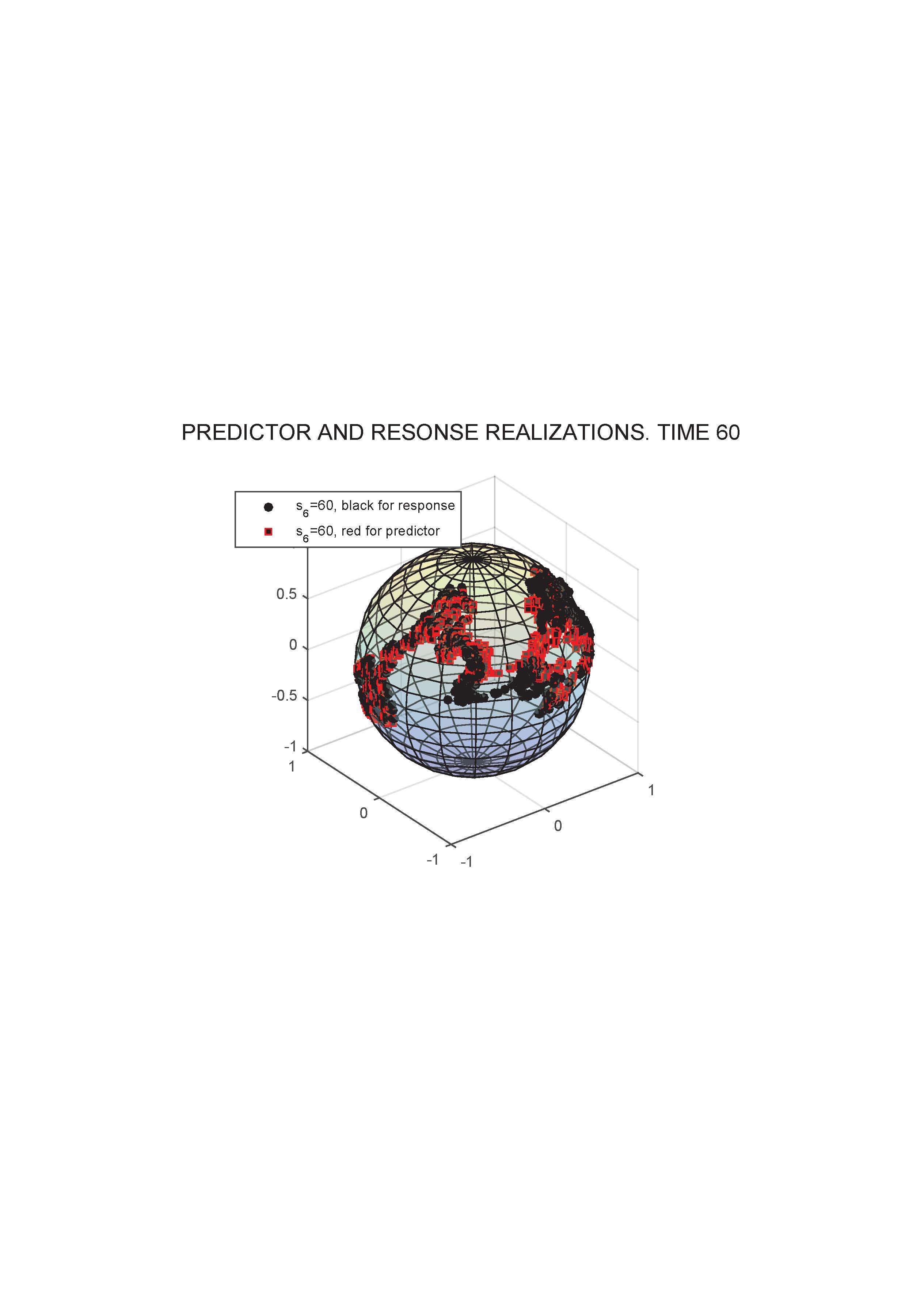}
\includegraphics[trim= 100 280 100 220, clip, width=0.32\textwidth]{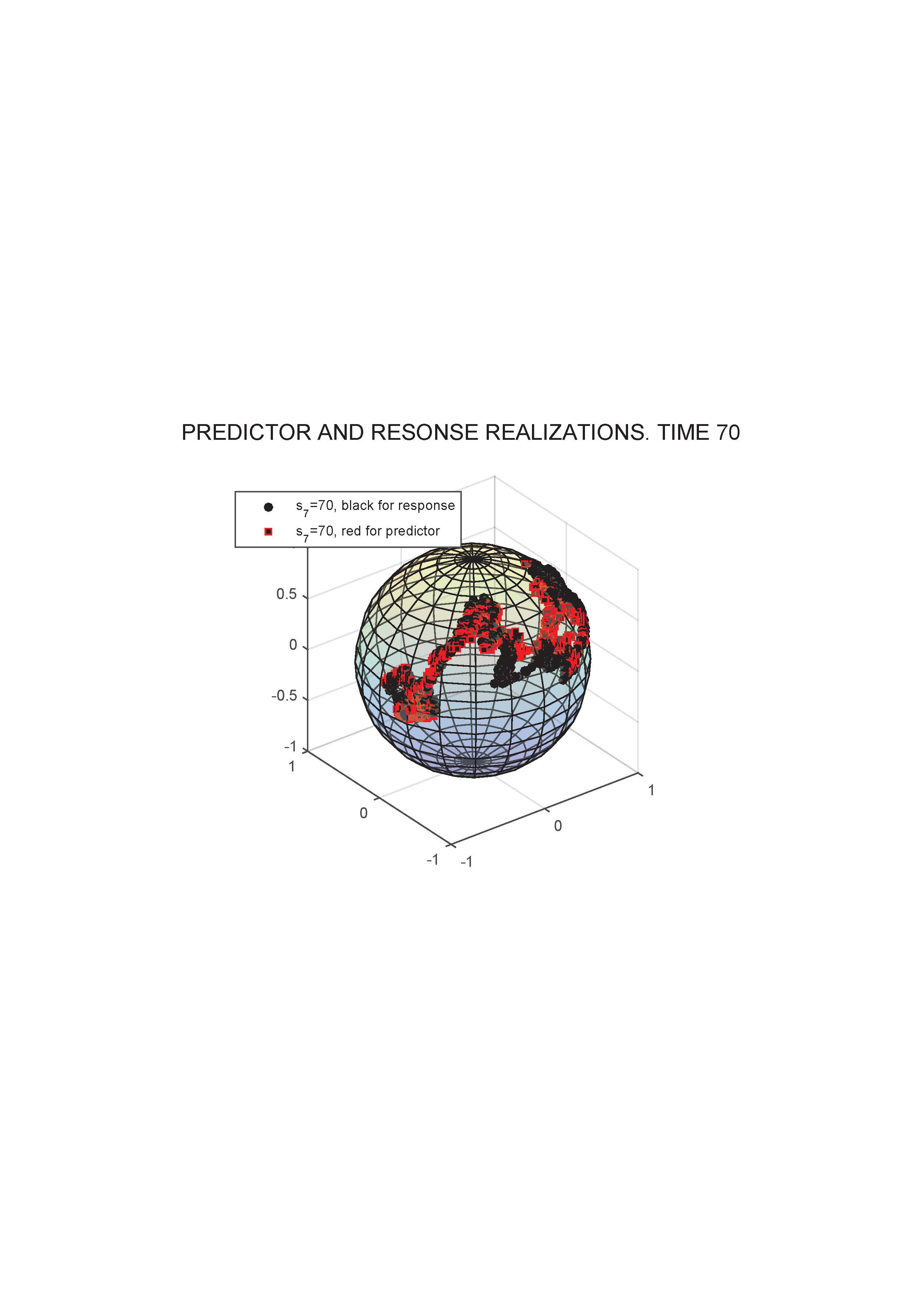}
\includegraphics[trim= 100 280 100 220, clip, width=0.32\textwidth]{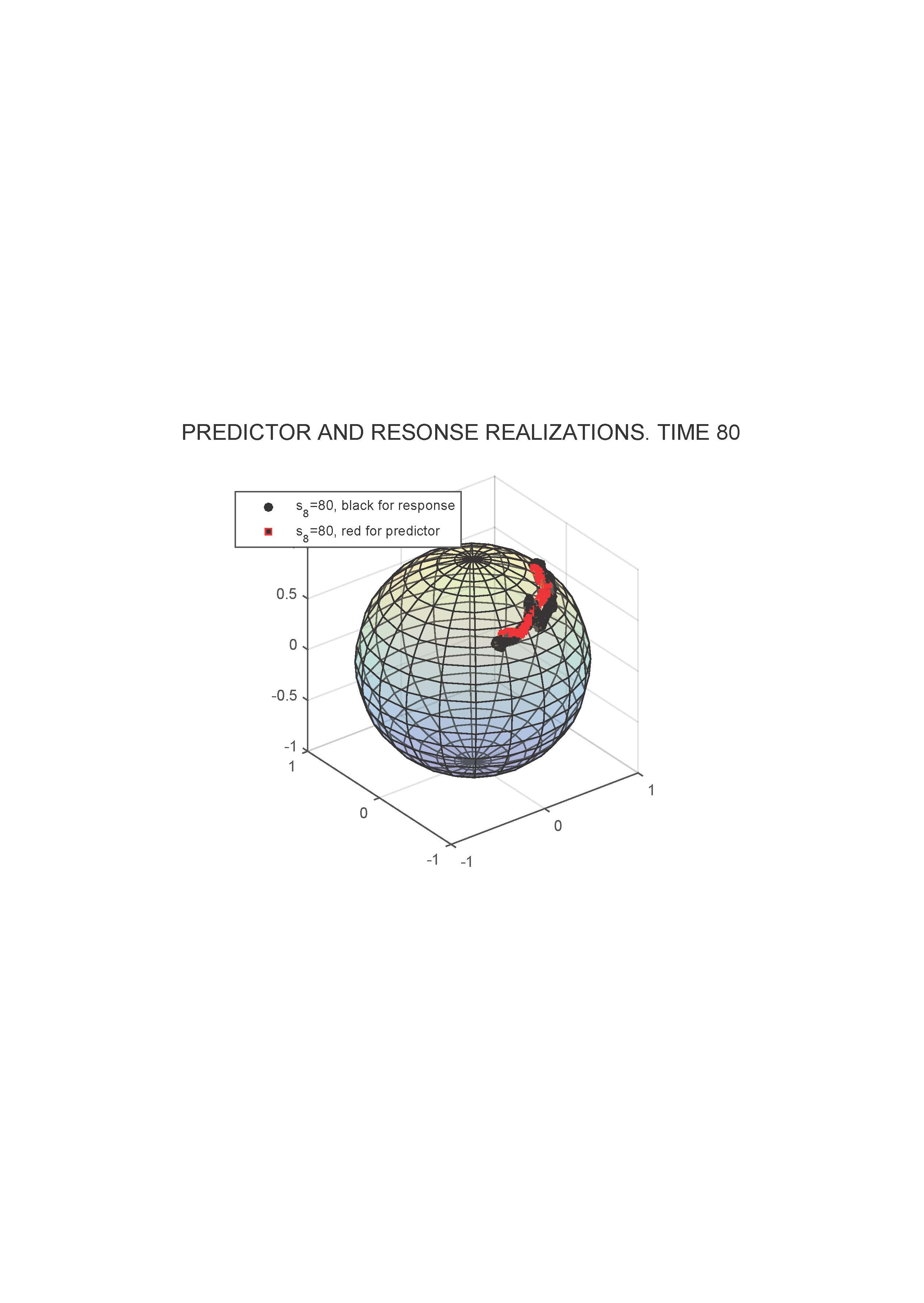}
\includegraphics[trim= 100 280 100 220, clip, width=0.32\textwidth]{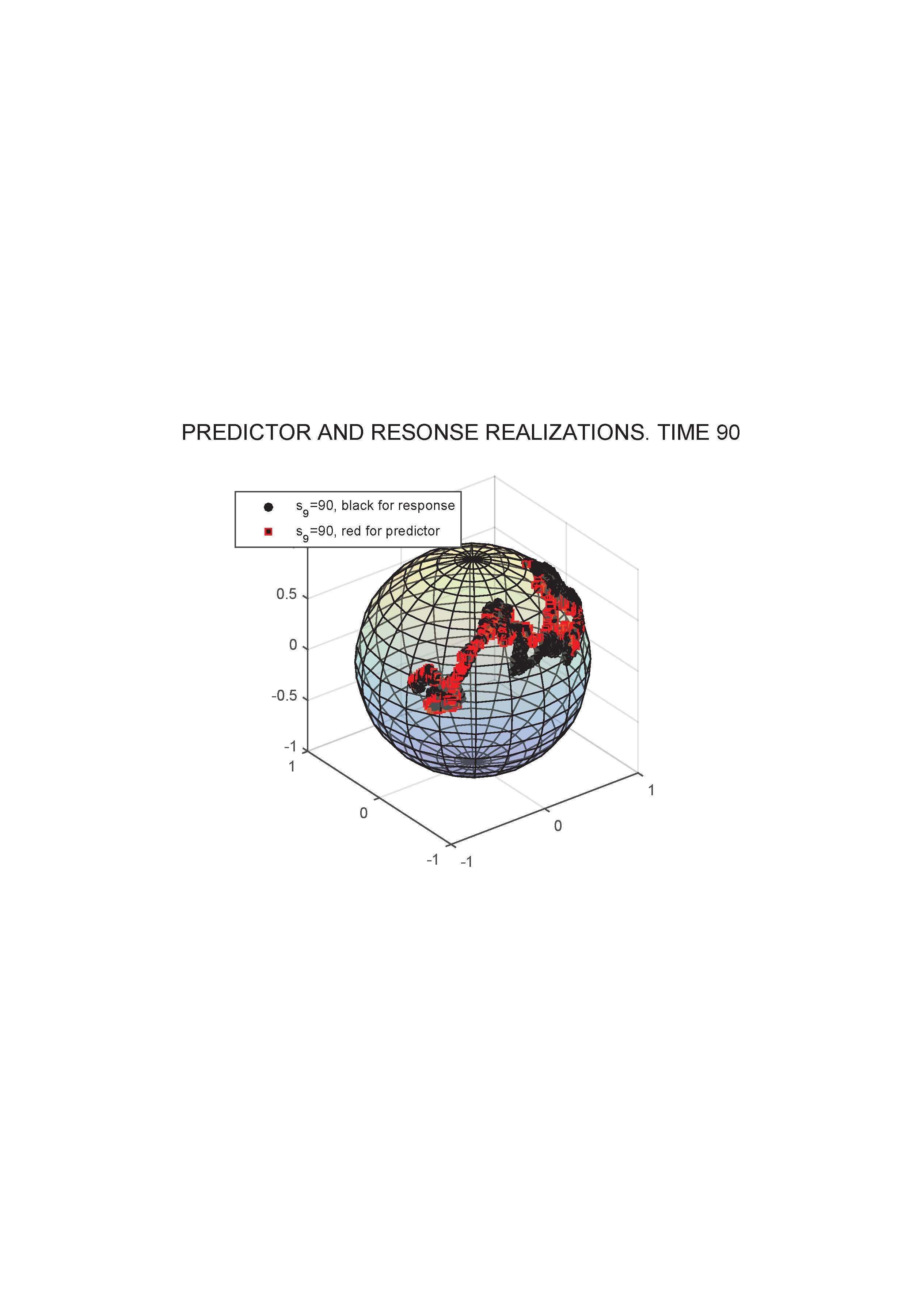}
\end{center}
\caption{Response curve values in black color, and its approximation in terms of the NW-type Fr\'echet curve predictor  curve values in red color,  at times $s=10, 20, 30, 40, 50, 60, 70, 80, 90$}\label{Fig:1.4}
\end{figure}

From    (\ref{ev}), the empirical extrinsic local linear Fr\'echet curve predictor  is  computed via the exponential map.  Its weights involve the empirical weighted moments  of the time-varying RFPC scores of the regressor process  (see also equations (\ref{s2302f})--(\ref{s2302h})). Figure \ref{Fig:1.9bb}-top displays  the original response curve values, and their extrinsic local linear  Fr\'echet curve regression estimation at the bottom.

\begin{figure}[!h]
\begin{center}
\includegraphics[ width=0.32\textwidth=0.4]{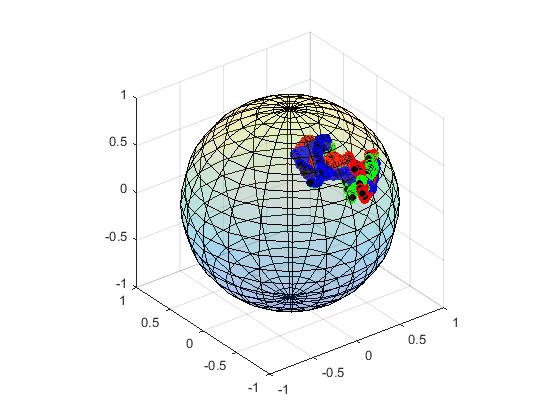}
\includegraphics[ width=0.32\textwidth=0.4]{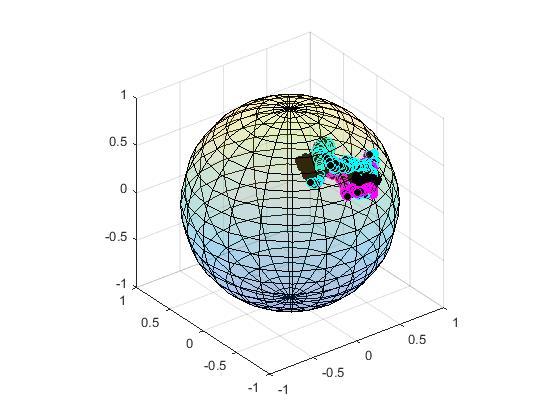}
\includegraphics[ width=0.32\textwidth=0.4]{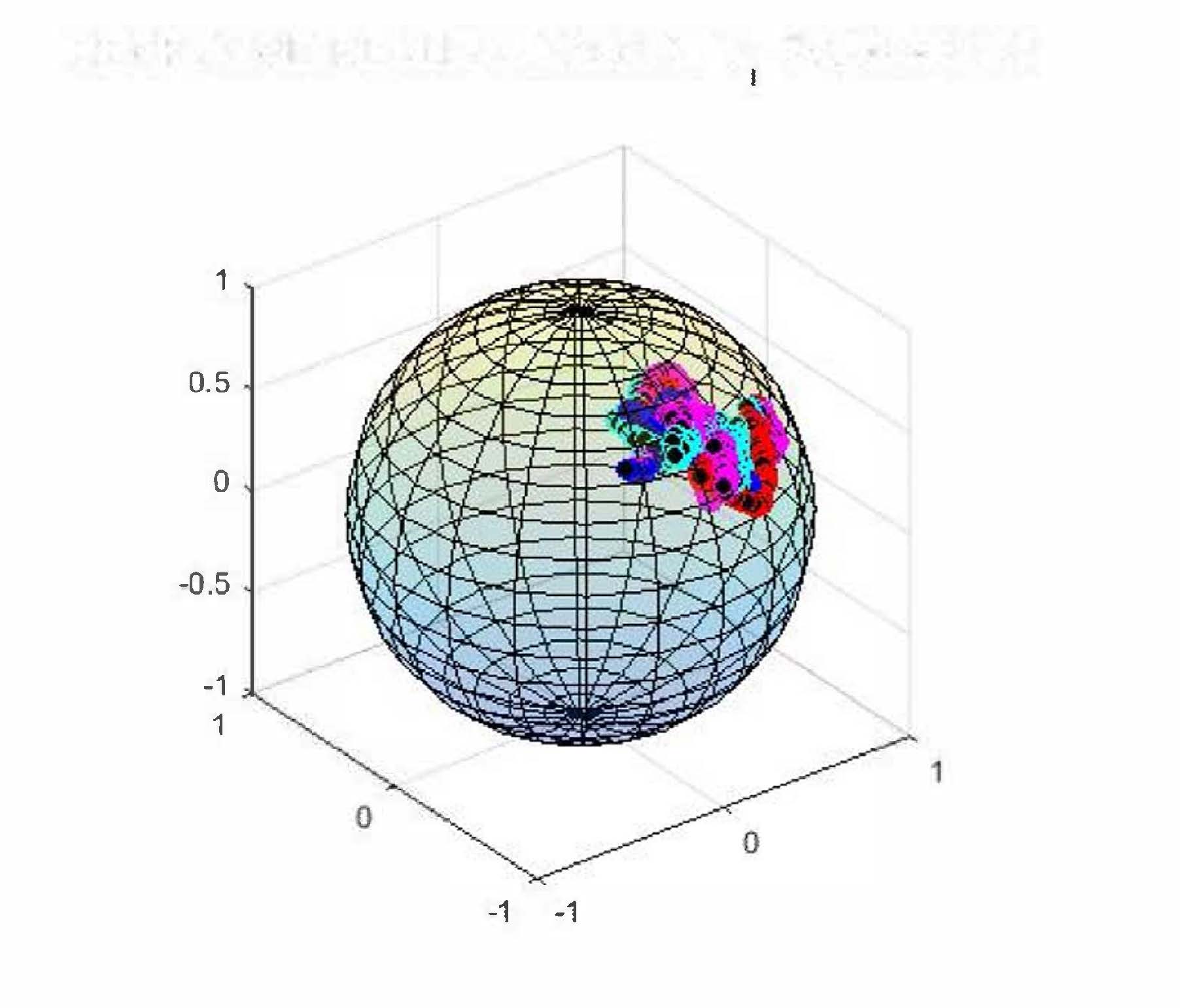}
\includegraphics[ width=0.32\textwidth=0.4]{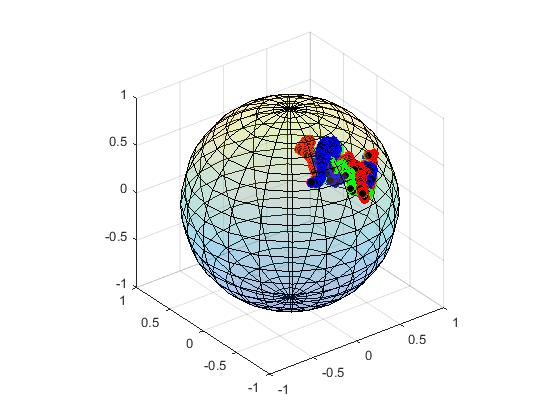}
\includegraphics[ width=0.32\textwidth=0.4]{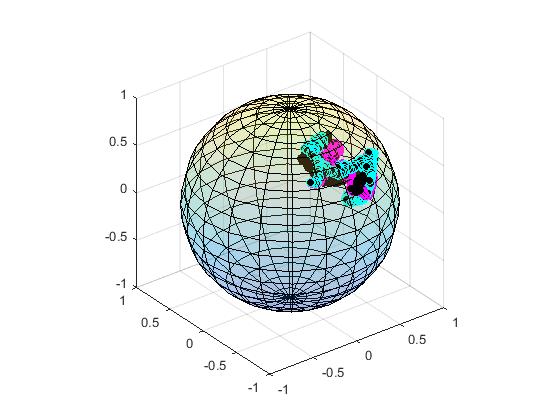}
\includegraphics[ width=0.32\textwidth=0.4]{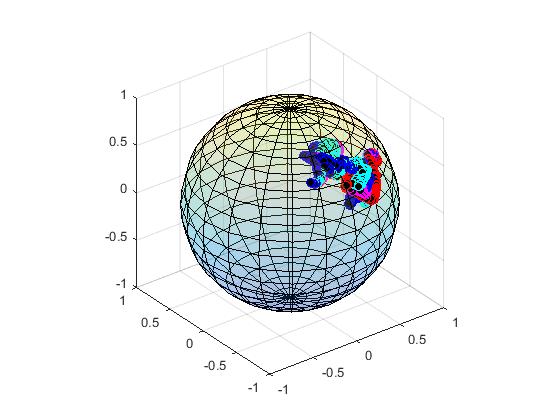}
\end{center}
\caption{ \emph{Response curve values from RFPCA}. At the top--left--hand--side,  response curve values  at $t=10$ (red), $t=20$ (green), $t=30$ (blue); top--center for $t=40$ (magenta), $t=50$ (cyan), $t=60$ (black), and top--right--hand--side $t=70$ (red), $t=80$ (magenta), $t=90$ (cyan) and $t=100$ (blue). \emph{Extrinsic local linear Fr\'echet curve predictions}. At the bottom--left--hand--side, extrinsic Fr\'echet curve response estimation at $t=10$ (red), $t=20$ (green), $t=30$ (blue); bottom--center for $t=40$ (magenta), $t=50$ (cyan), $t=60$ (black), and bottom--right--hand--side $t=70$ (red), $t=80$ (magenta), $t=90$ (cyan) and $t=100$ (blue)}\label{Fig:1.9bb}
\end{figure}

In our bandwidth parameter analysis of the extrinsic local linear Fr\'echet curve predictor, we have considered for $n=100,$   $B_{n}=(log(n))^{-1/\beta }$ with $\beta =10.10,  10.11,    10.12,  10.14,    10.15,   10.16,   10.18,   10.19,   10.20,    10.22,    10.23,    10.25$,
\linebreak
according to the RFPCA truncation parameter $RT=[log(n)]_{+}=5.$  See   Figure \ref{Fig:1.10bb}, where the empirical mean absolute
geodesic functional errors have been computed, for these
   bandwidth parameter values in the  interval $(0.8585, 0.862).$

\begin{figure}[!h]
\begin{center}
\includegraphics[height=0.25 \textheight, width=0.42\textwidth]{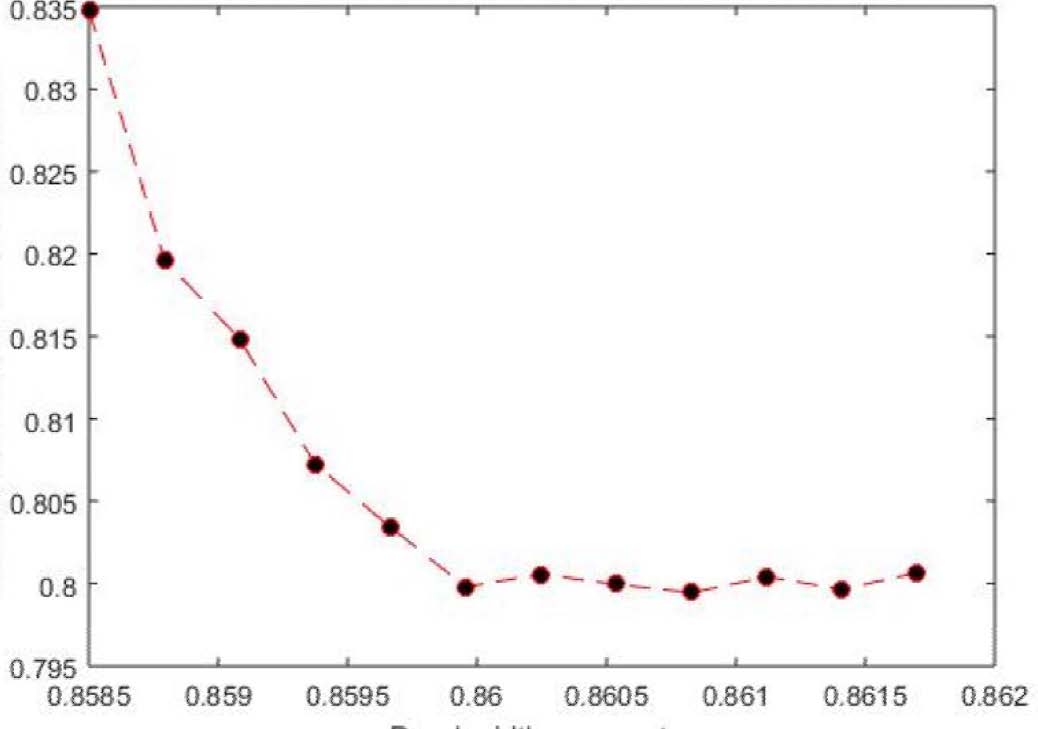}
\includegraphics[height=0.25 \textheight, width=0.42\textwidth]{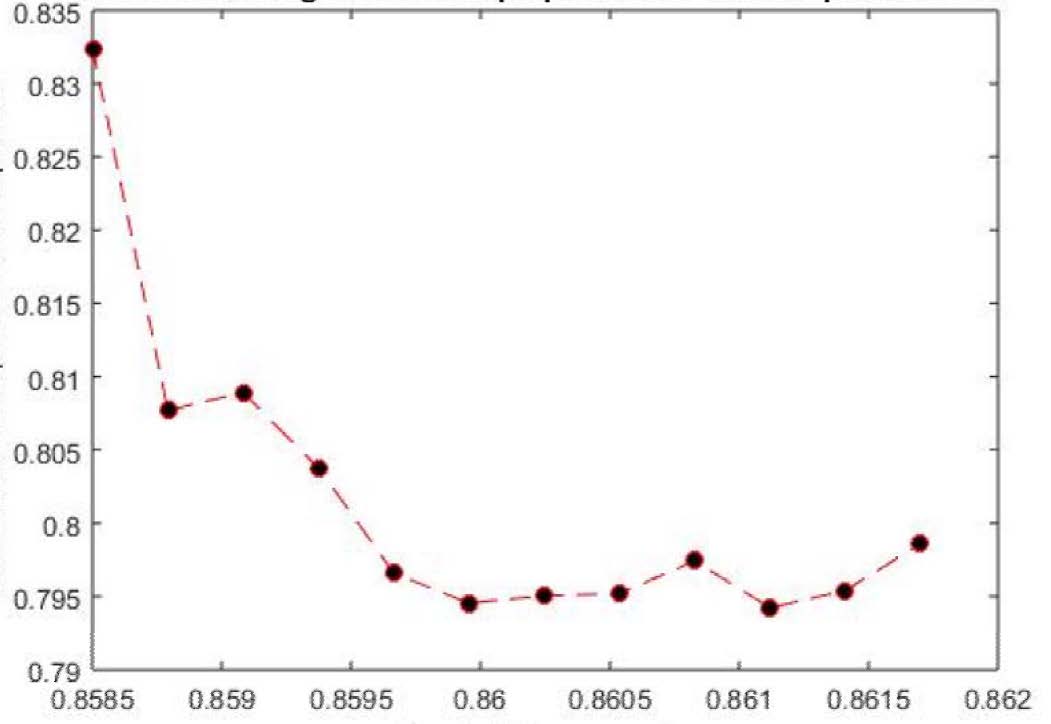}
\includegraphics[height=0.25 \textheight, width=0.42\textwidth]{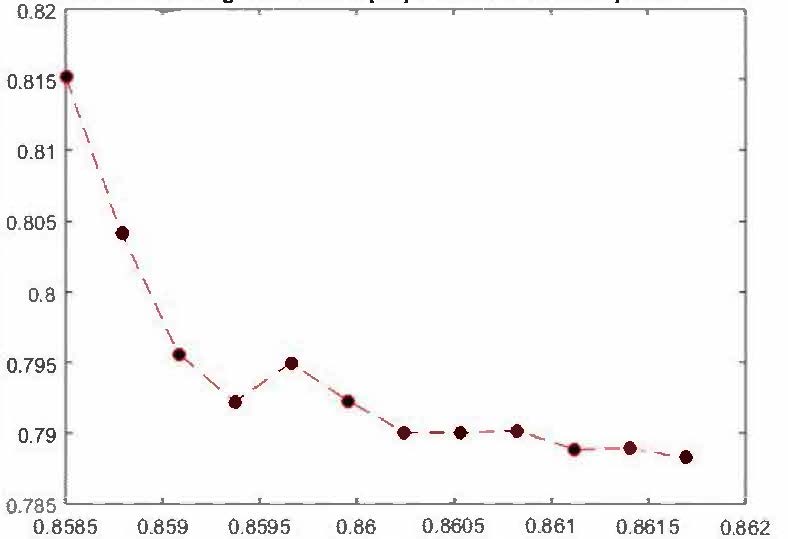}
\includegraphics[height=0.25 \textheight, width=0.42\textwidth]{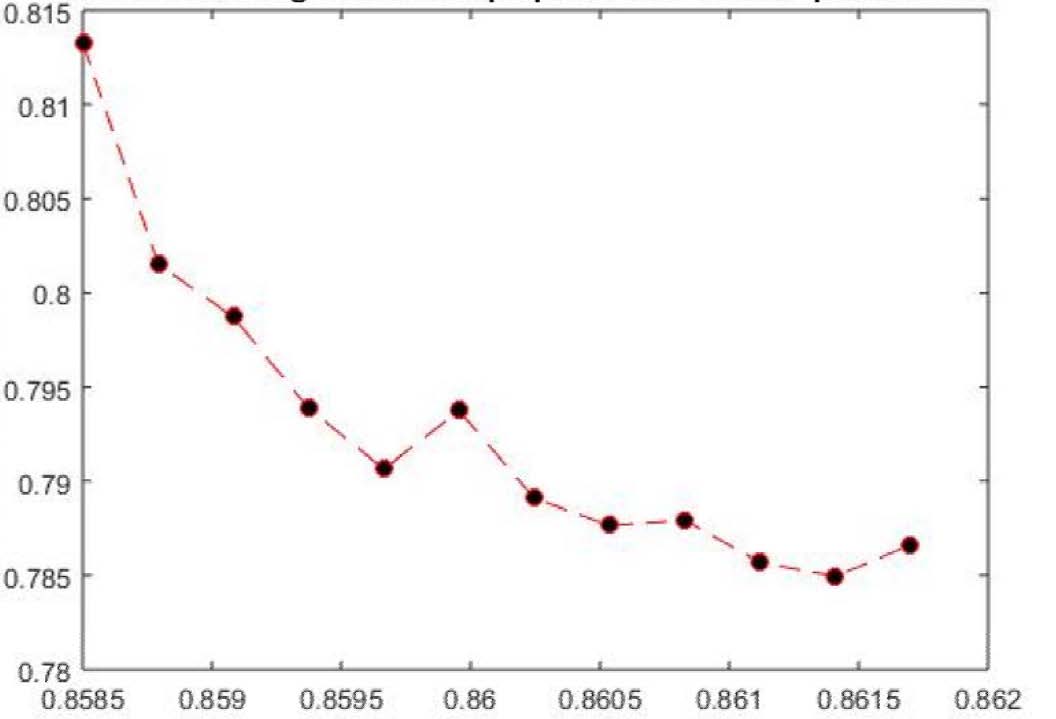}
\end{center}
\caption{\emph{Extrinsic local linear Fr\'echet curve prediction}. Maximum over the observed times in the functional sample of the supremum norms of the time--varying empirical mean of the  geodesic absolute functional errors, based on 50 repetitions, under the  four candidate set scenarios, characterized by shape   parameter $\theta .$
At the top--left--hand side $\theta =1.65,$  $\theta =1.70$ (top--right--hand side), $\theta = 1.80$ (bottom--left--hand--side) and $\theta =1.85$ (bottom--right--hand--side)}\label{Fig:1.10bb}
\end{figure}

We have again considered  bandwidth parameter values in the interval $( 0.3162,
 0.6310),$ corresponding to
   $[h(n)]^{-1}=n^{-\beta},$ $\beta=0.1000,    0.1375,    0.1750,$ \linebreak     $0.2125,    0.2500,$ in the implementation of the  intrinsic local linear Fr\'echet curve  predictor (\ref{elfb}). For $\beta= 0.2500,$ i.e., $[h(n)]^{-1}=[0.3162]_{-},$ and
   $[h(n)]_{+}=4,$ Figure \ref{Fig:1.9} displays  its spherical functional predictions (red color), approximating the observed original curve values of the response (black color) at different times.

\begin{figure}[!h]
\begin{center}
\includegraphics[ width=0.32\textwidth]{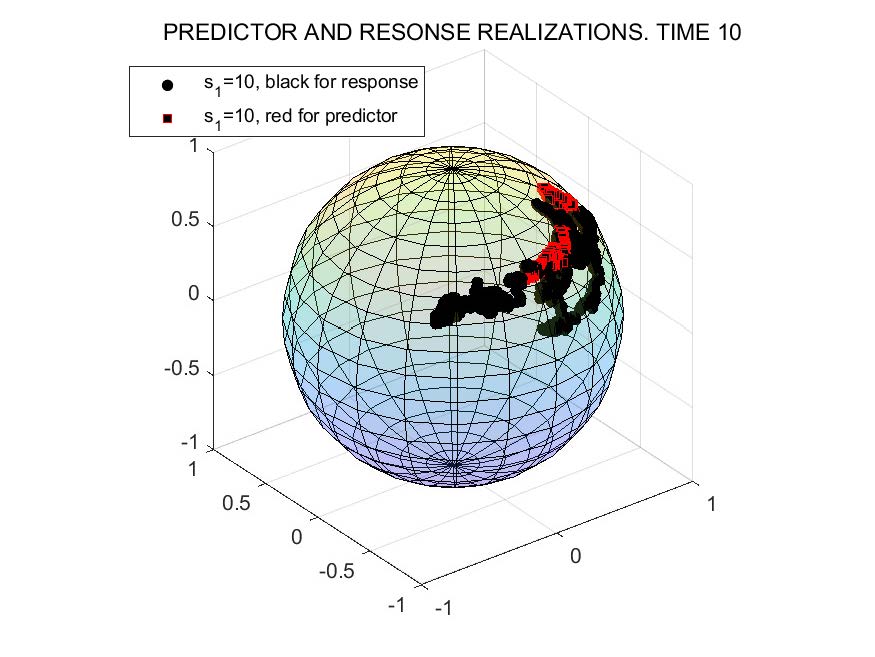}
\includegraphics[ width=0.32\textwidth]{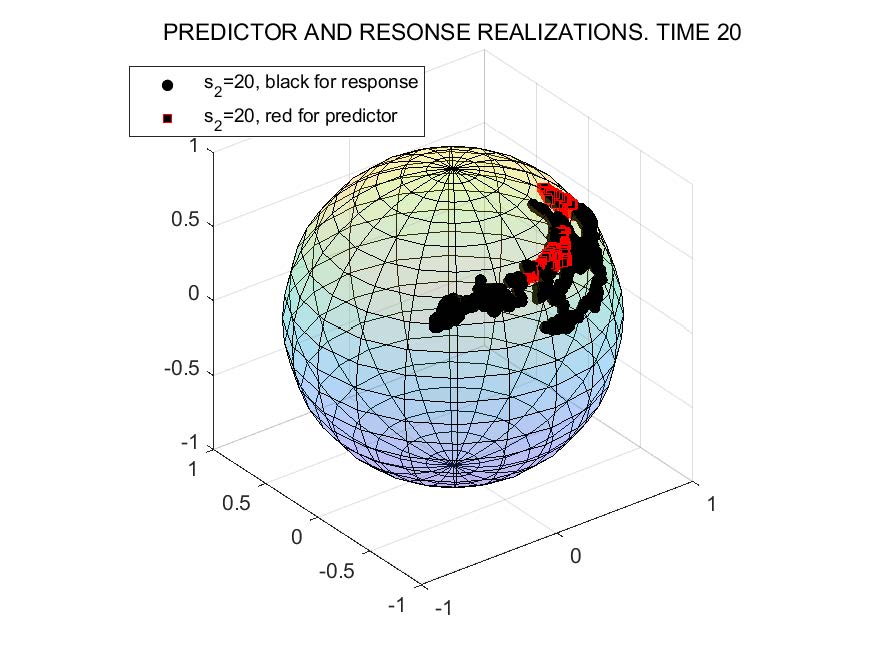}
\includegraphics[ width=0.32\textwidth]{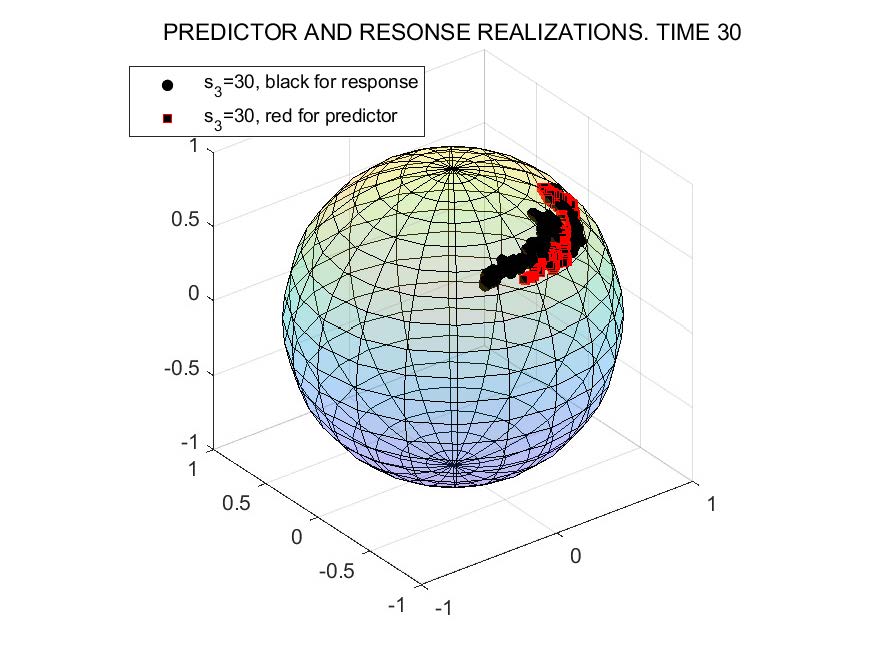}
\includegraphics[ width=0.32\textwidth]{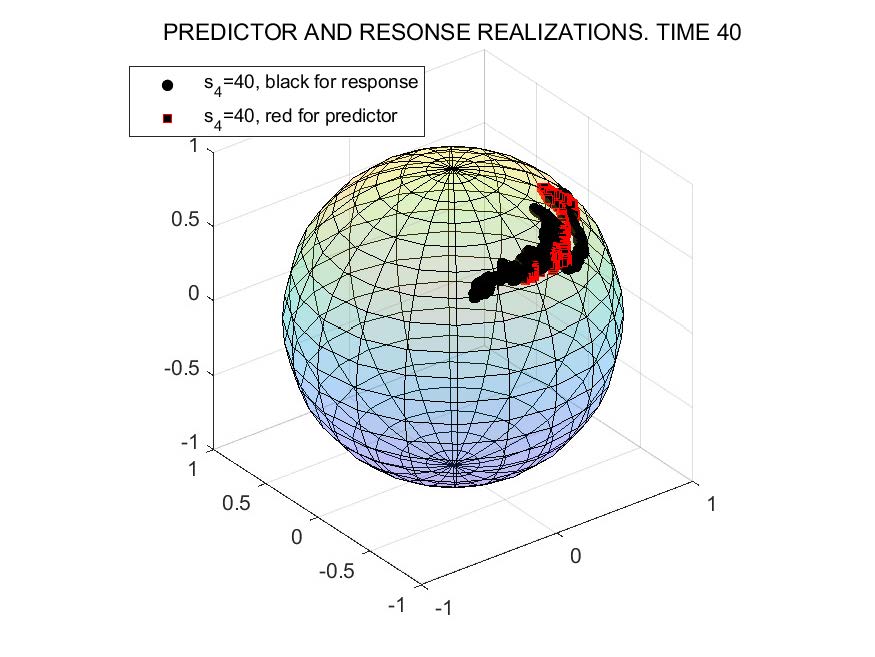}
\includegraphics[ width=0.32\textwidth]{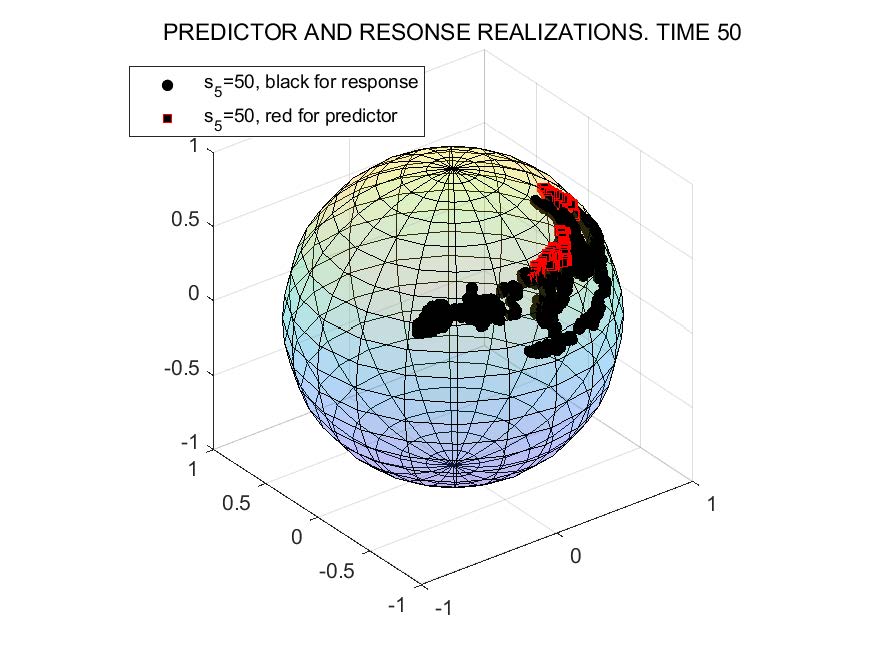}
\includegraphics[ width=0.32\textwidth]{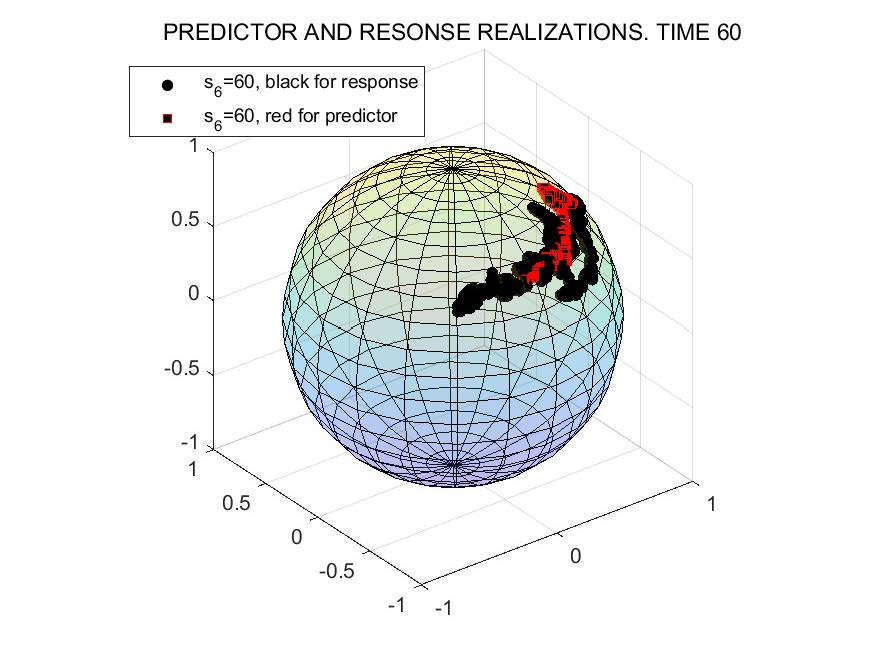}
\end{center}
\caption{Intrinsic local linear  Fr\'echet curve predictor in red, and original curve  response values  in black for times 10, 20, 30, 40, 50, 60}\label{Fig:1.9}
\end{figure}

\subsection{Quantitative analysis}
\label{QA}
Our quantitative  assessment of the predictive performance of the  three  local Fr\'echet curve predictors implemented  is based on
 the  intra- and inter- curve  variability displayed by  their  quadratic geodesic  errors.
Specifically,  Figures \ref{Fig:1.5},  \ref{Figex}  and \ref{Fig:1.10}    display at the left-hand-side the empirical distribution of the mean  intra-curve variability  of the geodesic functional quadratic errors for the NW-type Fr\'echet curve predictor,  and the  extrinsic and intrinsic  local linear Fr\'echet curve predictors,  respectively. The mean  inter-curve variability  of these errors is plotted at the right-hand-side of these figures for such  curve predictors.  Our  quantitative comparative analysis is based on the empirical mode and range of the    mean  intra- and inter- curve variabilities, which are  displayed in   Table \ref{tabS62} for the mean bandwidth value $[\overline{h}(n)]^{-1}=n^{-\overline{\beta}}= 0.4601,$ with $\overline{\beta}=\frac{1}{5}\left[0.1000 +  0.1375+ 0.1750+0.2125+0.2500\right].$ One can observe that the  worst finite-sample  performance corresponds to the extrinsic local linear Fr\'echet curve predictor displaying the largest empirical range and  mode values  of the mean intra- and inter-curve   variability of the quadratic geodesic errors.  Note that the geodesic errors of the  intrinsic local linear Fr\'echet curve predictor displays less intra-curve variability than he NW-type functional predictor. Thus, this predictor  better fits model  local regularity, being more robust under temporal changes in the support of the curve data.
  While NW-type  curve predictor displays less inter-curve  variability,   better supporting the   homogeneity assumption of the functional objects through  time.

\begin{figure}[!h]
\begin{center}
\includegraphics[height=5.5cm, width=5cm]{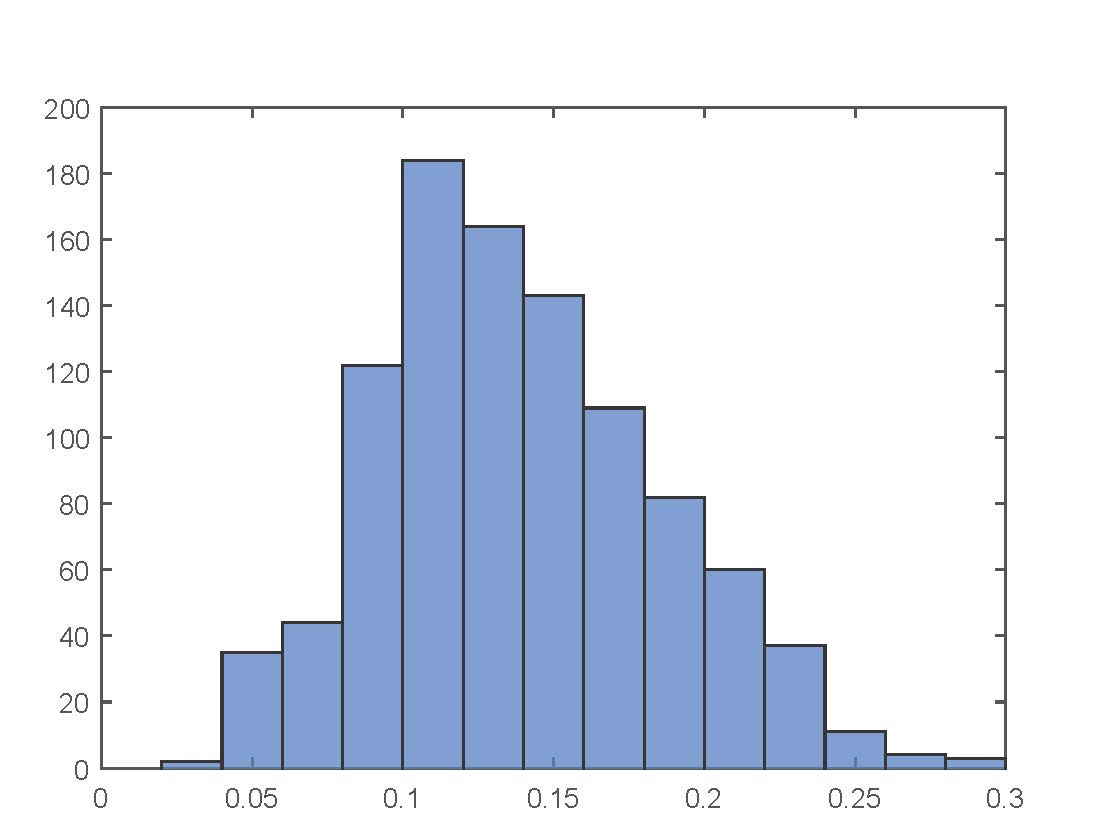}
\includegraphics[height=5.5cm, width=5cm]{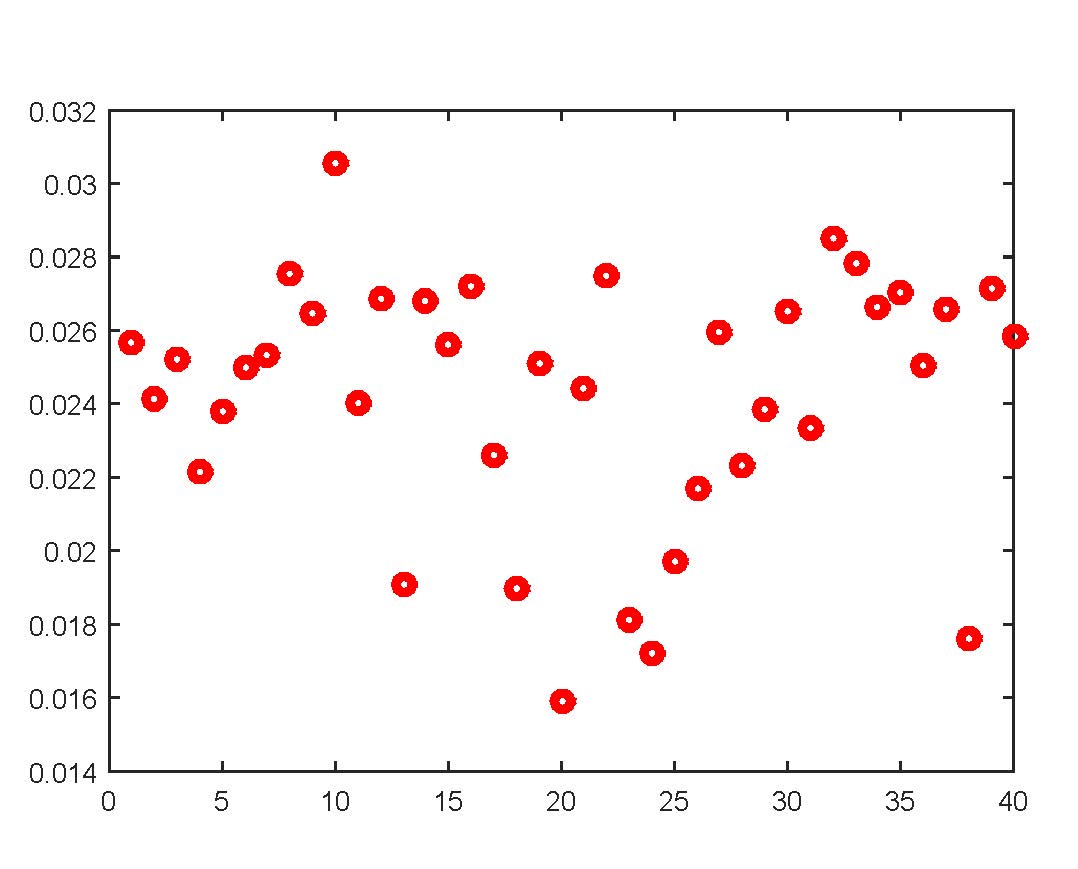}
\end{center}
\caption{\emph{NW-type local Fr\'echet curve predictor}.  Histogram of the empirical mean intra-curve variability  of the quadratic geodesic functional   errors  at the left-hand side, and the  empirical mean inter-curve variability values  of the quadratic geodesic functional   errors between $40$ sampled times    at the right-hand side}\label{Fig:1.5}
\end{figure}

 \begin{figure}[!h]
\begin{center}
\includegraphics[height=5.5cm, width=5cm]{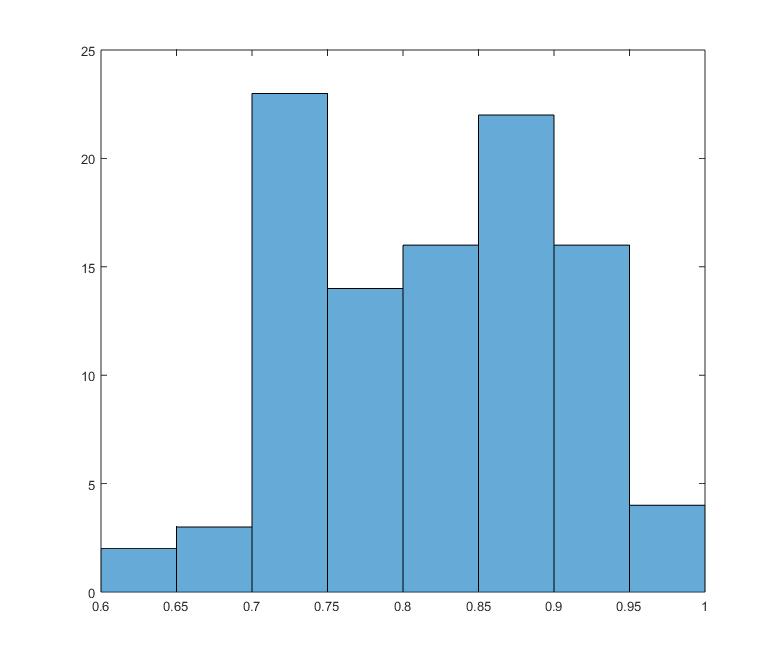}
\includegraphics[height=5.5cm, width=5cm]{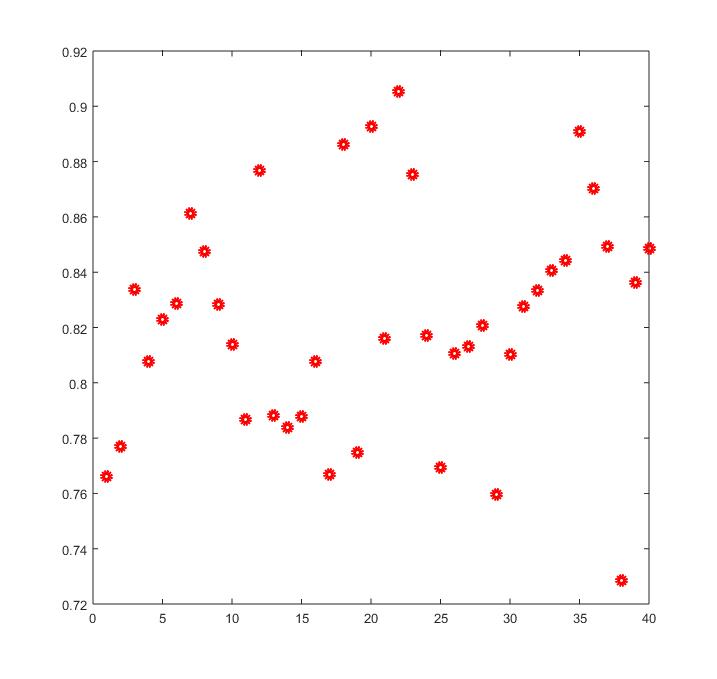}
\end{center}
\caption{\emph{Extrinsic  local linear Fr\'echet curve predictor}.  Histogram of the empirical mean intra-curve variability  of the quadratic geodesic functional   errors  at the left-hand side, and the  empirical mean inter-curve variability values  of the quadratic geodesic functional   errors between $40$ sampled times    at the right-hand side}\label{Figex}
\end{figure}

\begin{figure}[!h]
\begin{center}
\includegraphics[height=5.5cm, width=5cm]{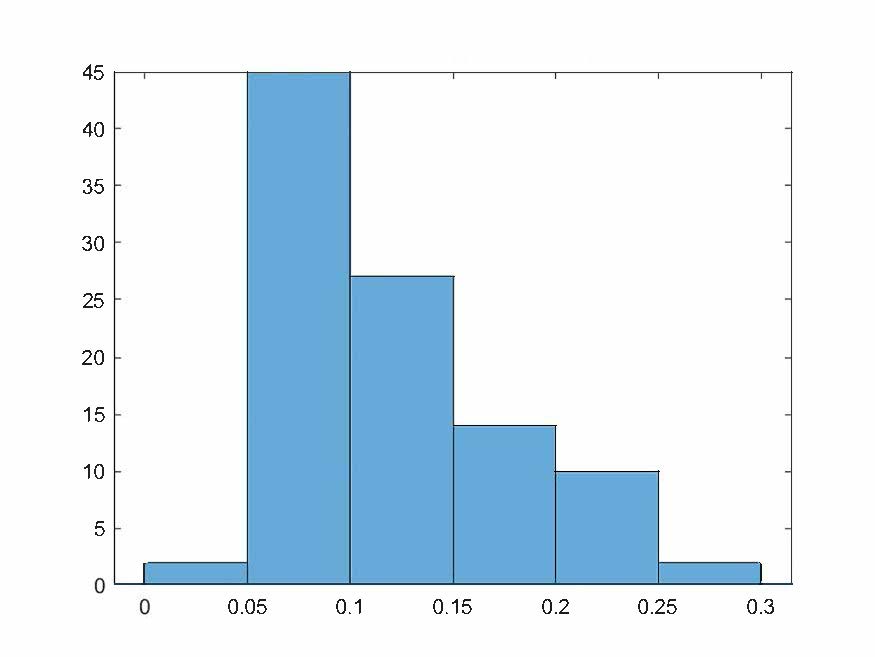}
 \includegraphics[height=5.5cm, width=5cm]{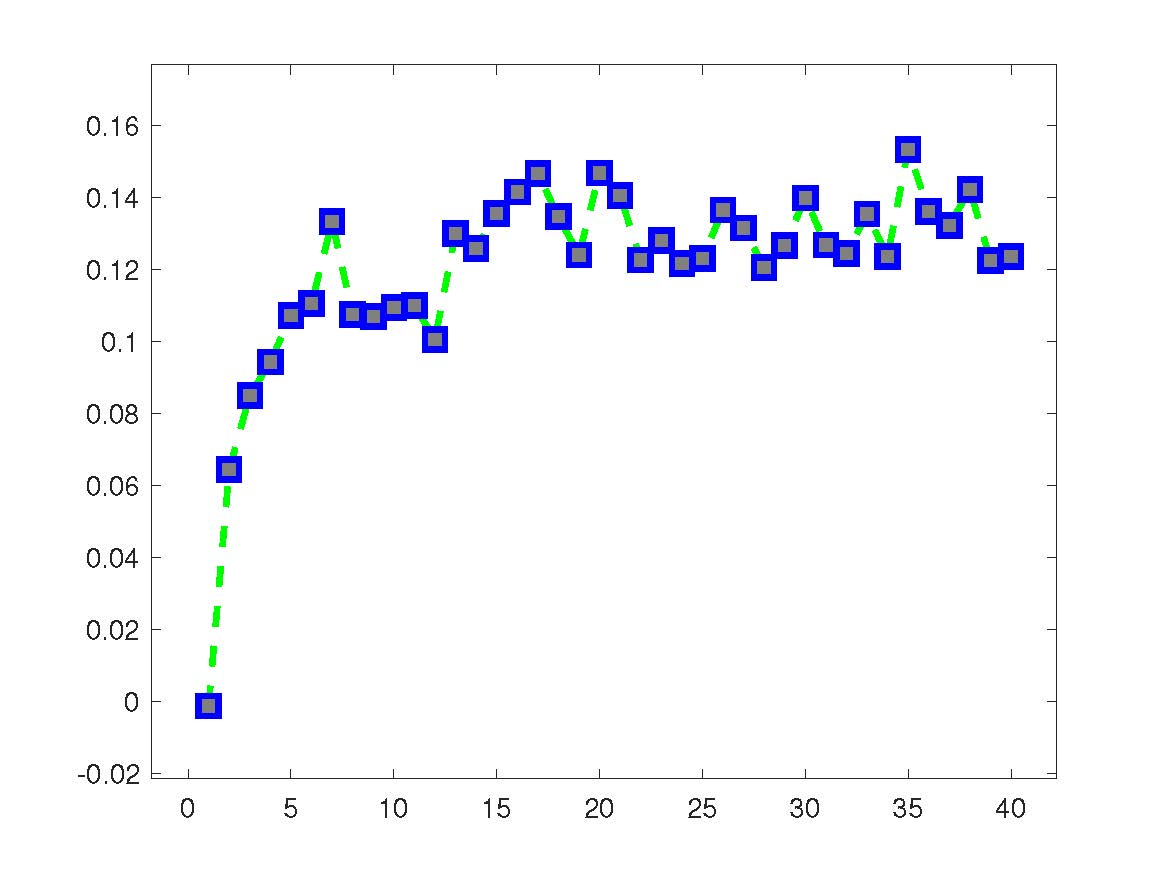}
\end{center}
\caption{\emph{Intrinsic local linear Fr\'echet curve predictor}. Histogram of the empirical mean intra-curve variability  of the quadratic geodesic functional   errors  at the left-hand side, and the  empirical mean inter-curve variability values  of the quadratic geodesic functional   errors between $40$ sampled times    at the right-hand side}\label{Fig:1.10}
\end{figure}

\begin{table}[t]
\begin{center}
\begin{tabular}{| l | l |l|l|}
\hline
   &  &   Empirical Range & Empirical Mode \\
   \hline
   NW-type & EMIACV& $(0.030, 0.300)$  & $0.1125$  \\  & EMIECV & $(0.016,0.031)$ & $0.0260$\\
    \hline
       ELLFCP & EMIACV &  $(0.600, 1.000)$ &  $0.7255$\\  & EMIECV  & $(0.720,0.920)$ & $0.8150$\\
       \hline
          ILLFCP & EMIACV & $(0.020, 0.300)$ &  $0.0550$\\  &  EMIECV  &  $(0.010,0.160)$ & $0.1300$\\
          \hline
          \end{tabular}
\caption{\textbf{Performance quantitative  assessment}.
Empirical mode and range of the distribution of the mean  intra- (EMIACV), and inter- (EMIECV) curve variability   of the geodesic functional quadratic errors for the NW-type, extrinsic (ELLFCP), and intrinsic  (ILLFCP)  local linear Fr\'echet curve predictors}
\label{tabS62}
\end{center}
\end{table}

\section{Real data analysis}
\label{rda}
Data  from NASA's National Space Science Data Center are available in the period 02/11/1979--06/05/1980, recorded every half second, and correspond to the first satellite NASA's MAGSAT spacecraft (NMS), which orbited the Earth  every $88$ minutes during seven months at around $400$ km altitude.   Functional samples  of size $82,$ $84,$  $83,$  $84,$ $85,$  $72,$ and  $52$ are available  during  the  days $3, 4, 5$  of each month in the period 02/11/1979--06/05/1980,  respectively. The elements of these samples are discretely observed at  $6000$ temporal nodes, reflecting a trajectory of $50$ minutes. All the sample functional objects have been evaluated at the same number, $6000$, of consecutive temporal nodes. The starting time  is chosen randomly, ensuring certain  heterogeneity and  representativeness.  We have removed the sample information corresponding to May, 1980, because of the missing data. Thus, the functional samples analyzed correspond to the period November $1979$\linebreak -April $1980,$ providing  sample trajectories  of Earth NMS (covering approximately between $13$ and $16$ orbits
per day).  The  spacecraft and Earth's magnetic  field share the   azimuthal angle. Hence, the relative position of the Earth's magnetic  field is  given by its time-varying polar angle.  Such a temporal angle  characterizes  the  Earth's magnetic  field components defining our curve response evaluated in the sphere.  The observed  trajectories  of the  NMS  then provide our regressor curve observations.
  Figure \ref{Fig:2.1b}  displays six functional elements  of the bivariate curve sample corresponding to times  $t=1, 15, 29, 43, 57$ and $71,$ for   November, 1979 (see also Figures \ref{Fig:2.1}  and \ref{Fig:2.2}   in   \ref{app1}, for  months December, 1979, and January--April, 1980).

The 5--fold cross validation technique is implemented to assess the predictive performance of the NW--type, and intrinsic and extrinsic local linear Fr\'echet curve predictors.
The inter-curve  temporal correlation is negligible compared with the intra-curve temporal correlation, since,  for every month,
each functional object of the global sample  contains almost two complete cycle, or Earth orbits (see Figures   \ref{Fig:2.1b}, \ref{Fig:2.1} and \ref{Fig:2.2}). Under this weak-correlated curve scenario,   at each iteration of the 5-fold cross validation technique, a random splitting into training and target curve samples is then considered  ignoring inter-curve correlation (see \cite{Marzio.14}).

Given the nature of the analyzed weak-correlated  curve data set (see Section \ref{tceff}),   the exponential map of the $\mathbb{H}$-valued local predictor  projected into coarser scales, defining  the large scale approximation of the   extrinsic local linear Fr\'echet curve predictor,   does not state on the manifold. Hence, only high resolution levels can be  reconstructed (see also Section \ref{sebwtp}). Thus, its 5--fold cross validation analysis is restricted to the  resolution levels $k=7,8, 9,$ corresponding to the projection into the subspace $\bigoplus_{k=7}^{9}\mathcal{H}_{k},$ with $\mathcal{H}_{k},$ $k=7,8,9,$ being the eigenspaces respectively generated by  the eigenfunctions $\phi_{k},$  $k=7,8,9,$ with  $$\boldsymbol{\phi}_{k}(t)=\left(\sin(\pi(k+1)t), \sin(\pi(k+1)t),\sin(\pi(k+1)t)\right),\quad t\in \mathbb{T}$$
\noindent (see  Table \ref{tab1} in Section \ref{QA2} for more details).

 Figure \ref{Fig:2.5b} displays 5--fold cross--validation  geodesic functional  absolute   errors reflecting the predictive performance of the NW-type  curve predictor (\ref{NWF}), for   bandwidths 0.2, 0.225, 0.25, 0.275 and 0.3, based on   November 1979 sample
  (see also Figures \ref{Fig:2.5} and \ref{Fig:2.6}  in  \ref{app1},  for the  period December 1979--January 1980, and February--April 1980,  respectively).   The predictive performance of the intrinsic local linear Fr\'echet curve predictor (\ref{empv}), based on the same sample, is displayed in Figure \ref{Fig:2.3b}.
 Figures \ref{Fig:2.3} and \ref{Fig:2.4} in  \ref{app1} respectively plot the numerical results obtained for  December 1979--January 1980, and  February 1980--April 1980. We observe similar  yellow (highest pointwise values of the   geodesic curve absolute errors) and  blue (lowest pointwise values of  the geodesic curve  absolute  errors) patterns across months at the five  contourplots displayed for both intrinsic local curve predictors.
\begin{figure}[!h]
\begin{center}
\includegraphics[width=0.28\textwidth]{Rs_Rg_24iv15_mes_1_M1}
\includegraphics[width=0.28\textwidth]{Rs_Rg_24iv15_mes_1_M15}
\includegraphics[width=0.28\textwidth]{Rs_Rg_24iv15_mes_1_M29}
\includegraphics[width=0.28\textwidth]{Rs_Rg_24iv15_mes_1_M43}
\includegraphics[width=0.28\textwidth]{Rs_Rg_24iv15_mes_1_M57}
\includegraphics[width=0.28\textwidth]{Rs_Rg_24iv15_mes_1_M71}
\end{center}
\caption{\emph{Spherical bivariate curve data. Sample functional  elements at times $t=1, 15, 29, 43, 57, 71,$ during
 November 1979, are displayed}}
\label{Fig:2.1b}
\end{figure}

   \begin{figure}[!h]
\begin{center}
\includegraphics[width=0.28\textwidth]{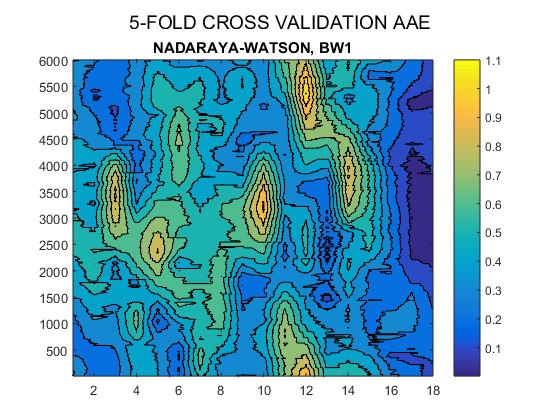}
\includegraphics[width=0.28\textwidth]{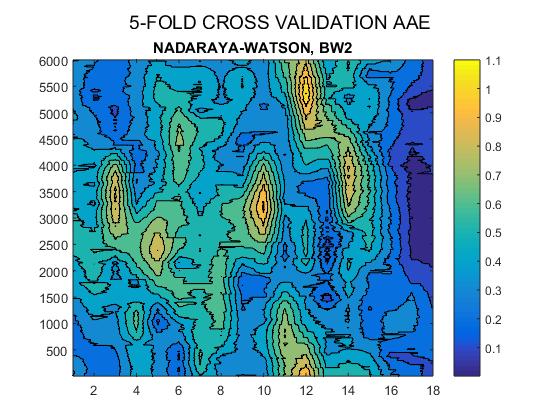}
\includegraphics[width=0.28\textwidth]{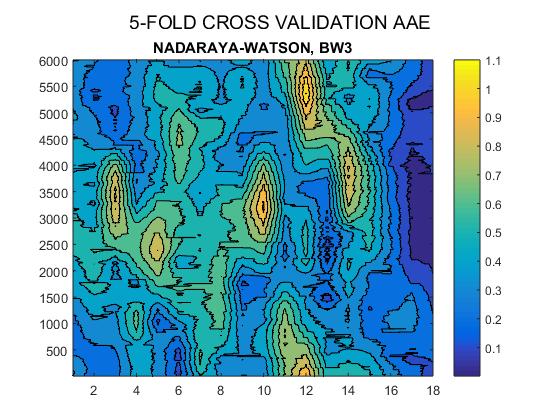}
\includegraphics[width=0.28\textwidth]{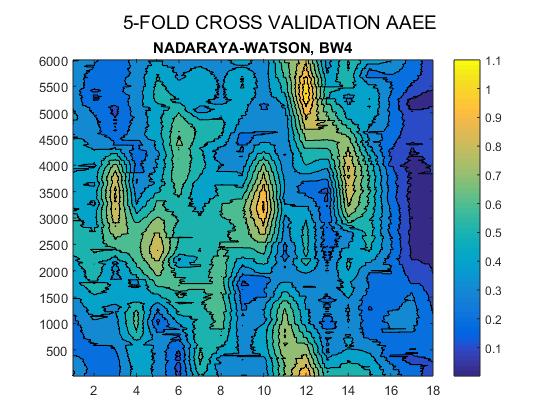}
\includegraphics[width=0.28\textwidth]{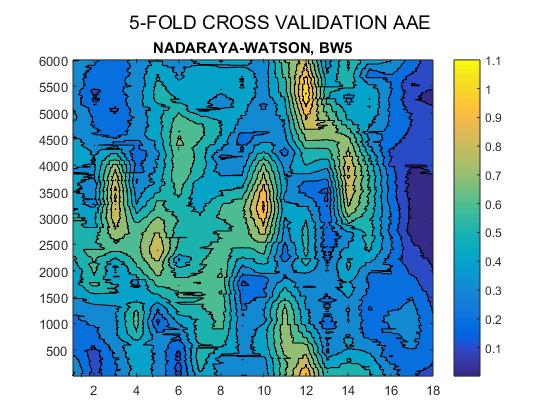}\\
 \end{center}
\caption{  \emph{NW--type local   Fr\'echet  regression  curve predictor}.  Contour plots reflect the pointwise values of the  5-fold cross-validation geodesic  absolute  curve errors for bandwidths $BW1= 0.2000,$   $BW2= 0.2250,$    $BW3=0.2500$ (top), and     $BW4=0.2750,$   $BW5= 0.3000$ (bottom) during November 1979}\label{Fig:2.5b}
\end{figure}

\begin{figure}[!h]
\begin{center}
\includegraphics[width=0.28\textwidth]{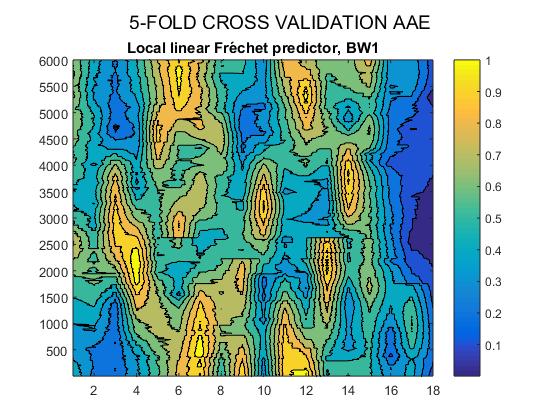}
\includegraphics[width=0.28\textwidth]{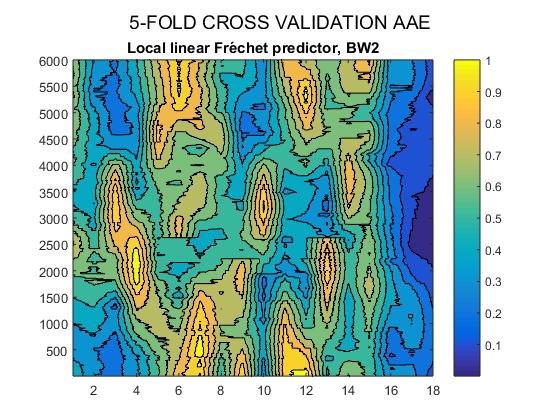}
\includegraphics[width=0.28\textwidth]{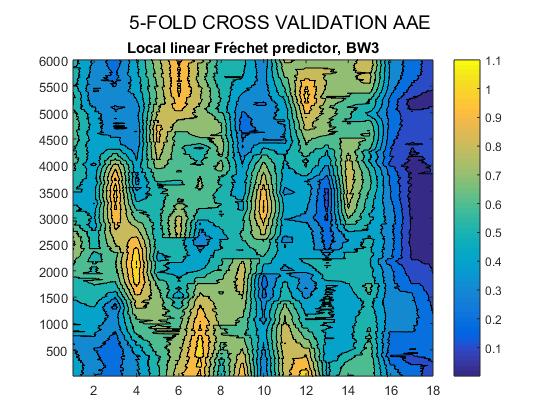}
\includegraphics[width=0.28\textwidth]{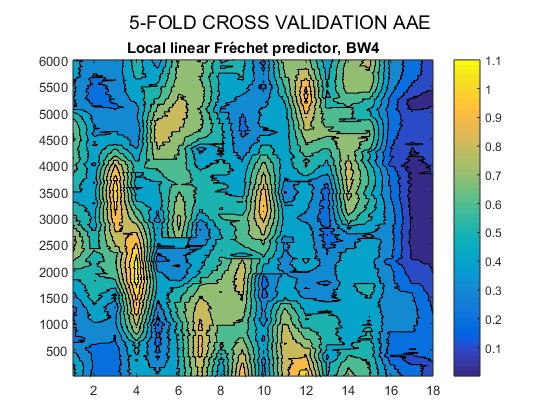}
\includegraphics[width=0.28\textwidth]{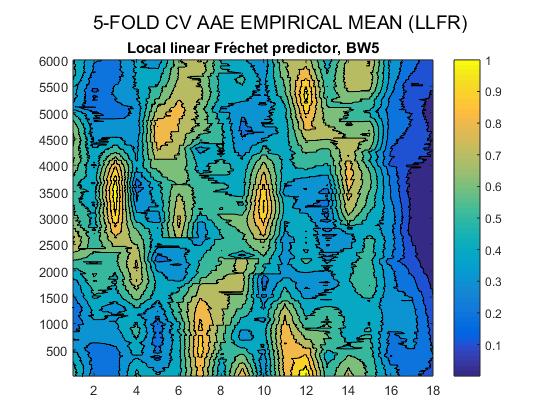}\\
\end{center}
\caption{\emph{Intrinsic  local linear  Fr\'echet  regression curve predictor}. Contourplots reflect the pointwise values of the  5-fold cross-validation geodesic  absolute  curve errors for bandwidths $BW1= 0.2000,$  $ BW2= 0.2250,$    $BW3=0.2500$   (top), and $BW4=0.2750,$   $BW5= 0.3000$ (bottom) during  November 1979}\label{Fig:2.3b}
\end{figure}
\subsection{Quantitative analysis}
\label{QA2}

 This section    analyzes  the numerical results plotted in the previous section for quantitative  assessment of the predictive performance of the three local Fr\'echet curve predictors  compared.  Geometrical assumptions underlying the methodology are also discussed in relation to this data set.

As commented in the previous section, the numerical results for the extrinsic local linear Fr\'echet curve predictor are restricted to  its projection into the subspace $\bigoplus_{k=7}^{9}\mathcal{H}_{k}$ in the ambient Hilbert space $\mathbb{H}.$
  Table \ref{tab1} reflects  the interaction of the RFPCA double truncation parameter, and  the bandwidth parameters tested in the intervals $(0.85, 0.86), (0.4, 0.8), (0.3, 0.7), (0.2, 0.4)$ and $(0.1, 0.2).$ Specifically, for  $j=1,2,3,$ the following quantities   $$\overline{\|CVE(Y_{j})\|}_{\infty}=\frac{1}{5}\sum_{i=1}^{5}\|\mbox{SE}(Y_{j})\|_{\infty,i},$$  \noindent   are displayed  in Table \ref{tab1}.
Here,
\begin{eqnarray}
\|\mbox{SE}(Y_{j})\|_{\infty,i}=\max_{s\in \{ s_{1},\dots, s_{n_{i}}\}}\left\|\log_{\widehat{\mu}_{Y_{0},\mathcal{M}}(t)}\left(\widehat{Y_{s,j}}^{(n_{i})}\right)-\log_{\widehat{\mu}_{Y_{0},\mathcal{M}}(t)}\left(
Y_{s,j}\right)\right\|^{2}_{\mathbb{H}},
\label{not}
\end{eqnarray}

\noindent with  $\{ s_{1},\dots, s_{n_{i}}\}$ being the time index set  defining the curve response target sample, for each iteration  $i=1,\dots, 5,$ of the $5$--fold cross validation technique.  We denote by  $\widehat{Y_{s,j}}^{(n_{i})}$ the double truncated extrinsic local linear Fr\'echet curve  prediction  of the  $j$--th functional component $Y_{s,j}$  ($j=1,2,3$) of the response $Y_{s}$ at target time $s\in \{ s_{1},\dots, s_{n_{i}}\},$    based on the  training  curve sample  of the iteration $i$  of the $5$--fold cross validation technique, for  $i=1,\dots, 5.$
\begin{table}[t]
\begin{center}
\begin{tabular}{| c | c | c | c | c | }
\hline
BW interval &$\overline{\|CVE(Y_{1})\|}_{\infty}$ & $\overline{\|CVE(Y_{2})\|}_{\infty}$ & $\overline{\|CVE(Y_{3})\|}_{\infty}$ & Mean\\
\hline
(0.85, 0.86) & 0.3226 & 0.2524 & 0.3576 & 0.3109 \\
(0.4, 0.8) & 0.4710 & 0.3972 & 0.3576 & 0.4086 \\
(0.3, 0.7) & 0.3226 & 0.3427 & 0.3576 & 0.3410 \\
(0.2, 0.4) & 0.4710 & 0.3959 & 0.3973 & 0.4214 \\
(0.1, 0.2) & 0.4306 & 0.3959 & 0.3973 & 0.4079. \\
   \hline
\end{tabular}
\caption{\emph{Bandwidth analysis for eigenspaces 7--9 in the time--varying tangent space  for  the projection  extrinsic local linear  Fr\'echet curve response predictor}.
 For the five bandwidth parameter intervals   tested, the  mean  over  the $5$ iterations of the maximum over the target times  of each $5$--fold cross validation  iteration of the quadratic $\mathbb{H}$-norm of the functional errors is displayed  (see equation (\ref{not}))
}
\label{tab1}
\end{center}
\end{table}

 Table \ref{tab1} then
summarizes  the 5-fold cross validation error variability across  bandwidth intervals centered at $B_{n}=n^{-\beta },$   $\beta  >0, $ with $n$ being, as before,  the functional sample size. These $\beta $ values have been selected according to the  double  RFPCA truncation parameter, ensuring the exponential map of the $\mathbb{H}$-valued local curve predictor,   projected into $\bigoplus_{k=7}^{9}\mathcal{H}_{k},$ stays on the manifold.

The  5-fold cross-validation geodesic  errors computed in the implementation of the local Fr\'echet  NW-type curve predictor are quite stable across the six months analyzed, for all bandwidth parameters studied. Specifically,   their intra-curve and inter-curve variability  range  and    patterns  are very similar across the six months (see Figures \ref{Fig:2.5b} in Section \ref{rda} and Figures \ref{Fig:2.5}-\ref{Fig:2.6}  in  \ref{app1}).
 The   intrinsic local linear Fr\'echet curve predictions display larger  inter-curve  variability than the NW-type curve predictor (see also Figures \ref{Fig:2.7} and \ref{Fig:2.8} in  \ref{app1}). Thus, this last predictor  better supports homogeneity   inter-curve assumption  across months. While its  intra-curve variability is  higher than the one displayed by the 5-fold cross validation geodesic errors of the  intrinsic local linear Fr\'echet curve predictions.  We then conclude that  the intrinsic local linear Fr\'echet curve predictor  seems to better fit the local regularity assumption across months.

The  empirical range of the 5-fold cross-validation geodesic  errors is  quite stable across months for both predictors, and all bandwidth parameter values tested,  with slightly less stability observed  for bandwidths  $BW1= 0.2000,$  $ BW2= 0.2250,$    $BW3=0.2500,$ in the case of the intrinsic local linear Fr\'echet curve predictor   (see Figures \ref{Fig:2.3b} in Section \ref{rda} and \ref{Fig:2.3}-\ref{Fig:2.4}, and \ref{Fig:2.7}--\ref{Fig:2.8} in \ref{app1}).  Note that, in the case of NW-type curve predictions the interval $[0,1.1],$ includes the 5-fold cross-validation geodesic  error range across all months.

As commented, conditions (i)-(ii) hold for the sphere.  We have applied the time-varying  exponential and logarithmic maps with origin at  the regressor empirical Fr\'echet curve mean values. The numerical results displayed support this small deviation from the assumption of a common Fr\'echet curve mean in condition (v) (see Remark \ref{rem1aa}).    The concentration of curve data around Fr\'echet curve means provides empirical evidence about  the remaining assertions in condition (v) are satisfied. Sample path local regularity assumed in condition (iii) is also satisfied, since  Lipschitz-continuous-type  curve patterns are observed  in the trajectories displayed  in Figure \ref{Fig:2.1b}, and Figures \ref{Fig:2.1}-\ref{Fig:2.2} in   \ref{app1}.   Finally, the observed smooth  inter-curve variability across months supports the stationarity assumption in (iv). The   curve frequency smooth local variation displayed in Figures \ref{Fig:2.1b}, and  \ref{Fig:2.1} and \ref{Fig:2.2} in  \ref{app1} also supports
conditions (vi) and (vii).

\section{Final comments}

 Section \ref{sfc1} first provides a  summary of the contributions of the present paper,  opening some challenging  research lines. A brief discussion on  the   bandwidth modelling adopted, depending on the functional sample size, is held in  Section \ref{sebwtp}. Interaction with other tuning parameters is also discussed. Section  \ref{tceff} summarizes the effect of time correlation.
\subsection{Contribution and open research lines}
\label{sfc1}
The present paper introduces and theoretically analyzes the optimality, in the mean-square sense, of the two proposed local   linear Fr\'echet curve predictors in manifolds.  In particular, up to our knowledge, this is the first time the explicit derivation of the least-squares local linear Fr\'echet functional regression predictor is  obtained for response and regressor evaluated in a separable Hilbert space.  Furthermore, the application of this result to the context of extrinsic local linear  Fr\'echet functional regression in curve spaces is non trivial (see Sections \ref{geoass}  and \ref{lefr0}).  The intrinsic local linear Fr\'echet  functional regression approach extends
\cite{Petersen.19}  to the context of infinite-dimensional regressor and response evaluated in a compact Riemannian  manifold, when curve data in the sample are temporal correlated. The simulation study and real-data application illustrate the predictive performance of the   empirical versions of both local linear Fr\'echet curve predictors,  in terms of accuracy and residual intra- and  inter-curve   variability,  comparing them with the   simpler  baseline method based on the geodesic  NW-type curve
predictor.

 Under condition (ii), the residual variability associated with the time-varying  exponential map of  the truncated RFPCA-based kernel Fr\'echet curve predictor  is upper bounded by its residual variability in the ambient Hilbert space  of vector functions, with values in the time-varying tangent space. Thus,   Chapters 4 and 8 in \cite{Bosq.00} can be applied to ensure  consistency of the truncated extrinsic local linear Fr\'echet curve predictor. The derivation of consistency in the asymptotic analysis of  the intrinsic local linear  Fr\'echet curve  predictor constitutes an interesting, but  still open research problem that could be addressed  in  a near future.

\subsection{Bandwidth and related tuning parameter selection problem}
\label{sebwtp}

In the extrinsic local linear Fr\'echet curve predictor, bandwidth parameter  interacts  with the RFPCA truncation parameter, and the sampling frequency in the discrete observation of the curve data values in the manifold.  We apply the results derived in  Chapters 4 and 8 in \cite{Bosq.00} to fit RFPCA truncation parameter,  as a function of the curve data sample size, ensuring consistency in the ambient Hilbert space. In particular, in  Section \ref{simstud}, we have applied   logarithmic RFPCA truncation satisfying the conditions assumed in these results. Condition (ii) guarantees good properties of the corresponding   exponential mapped curve predictor in the manifold. The   sampling frequency increases here as  a positive fractional power function of the curve sample size, also depending on  the resolution level  reflected in the  RFPCA truncation, established according to  the local regularity of the model. The bandwidth parameter is  obtained as a negative fractional power $B_{n}=(log(n))^{-1/\beta }$ (with $\beta =10.10,  10.11,    10.12,  10.14,    10.15,   10.16,   10.18,   10.19,   10.20,    10.22,    10.23,    10.25$) \linebreak of the curve sample log-size, whose rate of convergence to zero depends on the positive fractional power function defining the sampling frequency.

Under the presence of weaker  intra-curve correlations for long geodesic distances  in Section \ref{rda}, the exponential map of the $\mathbb{H}$-valued local linear Fr\'echet curve predictor projected into coarser scales does not stay on the manifold. A double  RFPCA truncation in the ambient Hilbert space is considered, excluding coarser scales, under  suitable bandwidth parameter values given by $B_{n}=n^{-\beta },$ $\beta =  0.1500, 0.3000, 0.5000, 0.6000, 0.8550.$ Note that data  have been   recorded every half second,  allowing the choice of small bandwidths for the  reconstruction  of    local variability  at the resolution levels selected from the double RFPCA truncation.

     As commented in  Section \ref{sfc1},  consistency in the framework of intrinsic local linear Fr\'echet curve prediction constitutes an open research problem. Hence, our choice in  Section \ref{simstud}  of the bandwidth parameter, as a negative power function of the curve sample size,  is not driven by consistency, but allows conditions   (vi) and (vii) to hold, ensuring  asymptotic optimality of the intrinsic local linear Fr\'echet curve predictor.
     Ergodicity assumed in condition (iv) ensures good properties of the empirical version of this curve predictor.    In  Section   \ref{simstud}, keeping in mind condition (iii), slow  increasing of sampling  frequency  is allowed, in terms of a  positive fractional power of the functional sample size.  In Section \ref{rda}, since data  have been   recorded every half second,  smaller bandwidth parameter values are considered.

\subsection{The effect of time correlation}
\label{tceff}
Sections \ref{simstud} and \ref{rda} correspond to a weak time correlation scenario between functional random objects,  with  stronger inter- and  intra-curve correlations displayed in Section \ref{simstud}.
 The extrinsic local linear Fr\'echet curve predictor is more affected by  changes in the  time correlation  than the other  intrinsic local Fr\'echet curve predictors tested  (see Tables \ref{tabS62} and \ref{tab1}).   In Section \ref{simstud}, a  linear time correlation scheme is generated in the time-varying tangent space, based on  a functional linear  model.  This aspect of the generations agrees with  the RFPCA nature of the log-mapped response and regressor processes  (see Chapters 4 and 8 in \cite{Bosq.00}). The exponential map of the  local linear functional predictions in the ambient Hilbert space then  states on the compact Riemannian manifold. In Section \ref{rda}, given the longer temporal support of the curve data analyzed, almost null correlations are observed  at coarser scales for large geodesic distances. In particular, the exponential map of the $\mathbb{H}$-valued local linear Fr\'echet curve predictor projected   into coarser scales does not state on the compact Riemannian manifold.

The relative  intra- and inter-curve residual variability  of both, the   NW-type, and the intrinsic local linear Fr\'echet curve predictors,  is quite  stable  across   the time correlation scenarios analyzed in Sections \ref{simstud} and \ref{rda} (see, in particular,  Sections \ref{QA} and  \ref{QA2}).
The intrinsic local linear Fr\'echet curve predictor displays  higher degree of robustness under time correlation  changes regarding local regularity assumptions. While   NW-type Fr\'echet curve predictor is less affected by  time correlation changes  regarding  the assumption of homogeneity through time of the  functional random objects (see also Table \ref{tabS62}  and Figures \ref{Fig:2.7} and \ref{Fig:2.8} in \ref{app1}).

\subsection*{Acknowledgements}

\medskip

 \noindent This work has been supported in part by projects MCIN/ AEI/PID2022-142900NB-I00,
MCIN/ AEI/PGC2018-099549-B-I00,  and CEX2020-001105-M MCIN/AEI/10.13039/501100011033).

\clearpage

\appendix
\section{Resolution of  equation system   (\ref{elf0})--(\ref{elf}) by projection}
\label{app0}

This section completes the derivation of the local linear Fr\'echet curve predictor for response and regressor evaluated in a separable Hilbert space $\mathcal{H}.$
Specifically, the projections defining this curve predictor are obtained in the following equations.

Denote, for $k\geq 1,$
 \begin{eqnarray}\mu_{0}&=&E\left[ K_{B_{n}}\left(\left\|X-x_{0}\right\|_{\mathcal{H}}\right)\right]\nonumber\\ \mu_{j}^{(k)}&=&E\left[[(X-x_{0})(\phi_{k})]^{j}K_{B_{n}}\left(\left\|X-x_{0}\right\|_{\mathcal{H}}\right)\right],\ j\geq 1\nonumber\\
r_{j}^{(k)}&=&E\left[K_{B_{n}}\left(\left\|X-x_{0}\right\|_{\mathcal{H}}\right)[(X-x_{0})(\phi_{k})]^{j} Y(\phi_{k})\right],\ j\geq 0,\label{eqnirmfunct}
\end{eqnarray}
\noindent where $Y(\phi_{k})=\left\langle Y,\phi_{k}\right\rangle_{\mathcal{H}},$ and  $[(X-x_{0})(\phi_{k})]^{j}=\left[\left\langle X-x_{0}, \phi_{k}\right\rangle_{\mathcal{H}}\right]^{j},$ $k\geq 1,$ $j\geq 0.$

 From equations (\ref{exp2b}) and  (\ref{eqnirmfunct}),
 \begin{eqnarray}
 &&0=\sum_{k\geq 1}\lambda_{k}(\mathcal{A})\mu_{2}^{(k)}-r_{1}^{(k)}+\left\langle m(x_{0}),\phi_{k}\right\rangle_{\mathcal{H}}\mu_{1}^{(k)},
 \nonumber\\
 \label{s2302}
 \end{eqnarray}

 \noindent and, from equation (\ref{effn2}), for every $k\geq 1,$
 \begin{eqnarray}&&
 \left\langle m(x_{0}),\phi_{k}\right\rangle_{\mathcal{H}} =m(x_{0})(\phi_{k})=\frac{\left\langle r_{0}-A(\mu_{1}),\phi_{k}\right\rangle_{\mathcal{H}} }{\mu_{0}}\nonumber\\
 &&=\frac{r_{0}(\phi_{k})-\mathcal{A}(\mu_{1})(\phi_{k})}{\mu_{0}}=\frac{r_{0}^{(k)}-\lambda_{k}(\mathcal{A})\mu_{1}^{(k)}}{\mu_{0}}. \label{s2302b}
 \end{eqnarray}

 By replacing $\left\langle m(x_{0}),\phi_{k}\right\rangle_{\mathcal{H}}$ in equation (\ref{s2302}) by its expression in equation (\ref{s2302b}),
 we obtain
 \begin{eqnarray}
 &&0=\sum_{k\geq 1}\lambda_{k}(\mathcal{A})\mu_{2}^{(k)}-r_{1}^{(k)}+\mu_{1}^{(k)}\frac{r_{0}^{(k)}-\lambda_{k}(\mathcal{A})\mu_{1}^{(k)}}{\mu_{0}}
 \nonumber\\
 &&=\sum_{k\geq 1}\lambda_{k}(\mathcal{A})\left[\mu_{2}^{(k)}-\frac{[\mu_{1}^{(k)}]^{2}}{\mu_{0}}\right]-r_{1}^{(k)}+\frac{r_{0}^{(k)}\mu_{1}^{(k)}}{\mu_{0}}.
  \label{s2302c}
 \end{eqnarray}

 In particular, equation (\ref{s2302c}) holds when, for every $k\geq 1,$
 \begin{eqnarray}
 &&
 \lambda_{k}(\mathcal{A})\left[\mu_{2}^{(k)}-\frac{[\mu_{1}^{(k)}]^{2}}{\mu_{0}}\right]-r_{1}^{(k)}+\frac{r_{0}^{(k)}\mu_{1}^{(k)}}{\mu_{0}}=0,\nonumber\\
 \label{s2302d}
 \end{eqnarray}
 \noindent leading to
 \begin{eqnarray}
 &&\widehat{\lambda_{k}}(\mathcal{A})=\frac{\mu_{0}r_{1}^{(k)}-r_{0}^{(k)}\mu_{1}^{(k)}}{\mu_{2}^{(k)}\mu_{0}-[\mu_{1}^{(k)}]^{2}},\ k\geq 1.\label{s2302e}
 \end{eqnarray}
 Equation (\ref{s2302e}) means that the Fr\'echet derivative $\mathcal{A}$ of the regression operator $m$   admits
 the following  series expansion: For every $f,g\in \mathcal{H},$
 $$\widehat{\mathcal{A}}(f)(g)=\sum_{k\geq 1}\frac{\mu_{0}r_{1}^{(k)}-r_{0}^{(k)}\mu_{1}^{(k)}}{\mu_{2}^{(k)}\mu_{0}-[\mu_{1}^{(k)}]^{2}}\left\langle f,\phi_{k}\right\rangle_{\mathcal{H}}\left\langle g,\phi_{k}\right\rangle_{\mathcal{H}}.$$

 Replacing  $\lambda_{k}(\mathcal{A})$ in equation (\ref{s2302b}) by $\widehat{\lambda_{k}}(\mathcal{A})$  in (\ref{s2302e}), we have
 \begin{eqnarray}\widehat{m(x_{0})}(\phi_{k})=
 \left\langle \widehat{m(x_{0})},\phi_{k}\right\rangle_{\mathcal{H}}&=&\frac{r_{0}^{(k)}-\left[\frac{\mu_{0}r_{1}^{(k)}-r_{0}^{(k)}\mu_{1}^{(k)}}{\sigma_{0}^{2}(k)}\right]
 \mu_{1}^{(k)}}{\mu_{0}}\nonumber\\
 &=&\frac{\mu_{2}^{(k)}r_{0}^{(k)}-\mu_{1}^{(k)}r_{1}^{(k)}}{\sigma_{0}^{2}(k)},\ k\geq 1,
\label{s2302f}\end{eqnarray}
\noindent where $\sigma_{0}^{2}(k)=\mu_{2}^{(k)}\mu_{0}-[\mu_{1}^{(k)}]^{2}.$

Thus, from equation (\ref{s2302f}), keeping in mind definition of  $r_{0}^{(k)}$ and $r_{1}^{(k)}$ in equation (\ref{eqnirmfunct}), for every
$k\geq 1,$
 \begin{eqnarray}&&
 \widehat{m(x_{0})}(\phi_{k})=\frac{1}{\sigma_{0}^{2}(k)}\int_{\mathcal{H}\times \mathcal{H}}y(\phi_{k})K_{B_{n}}\left(\left\|x-x_{0}\right\|_{\mathcal{H}}\right)\nonumber\\
 &&\hspace*{4cm}\times \left[\mu_{2}^{(k)}-\mu_{1}^{(k)}(x-x_{0})(\phi_{k})\right]P(dx,dy)\nonumber\\
 &&=E\left[S^{(k)}(X,x_{0},B_{n})Y(\phi_{k})\right],\label{s2302g}\end{eqnarray}
 \noindent where
 \begin{eqnarray}&& S^{(k)}(X,x_{0},B_{n})=\frac{1}{\sigma_{0}^{2}(k)}\left[K_{B_{n}}\left(\left\|X-x_{0}\right\|_{\mathcal{H}}\right)\left[\mu_{2}^{(k)}
 -\mu_{1}^{(k)}(X-x_{0})(\phi_{k})\right]\right].\nonumber\\
 \label{s2302h}\end{eqnarray}

\section{Auxiliary material for December 1979-April 1980}
\label{app1}
In this appendix, the bivariate curve data sample, and the  5-fold cross validation variability across months is visualized for the period December 1979-April 1980.  The numerical analysis of these graphical results is provided in  Sections \ref{rda}, \ref{QA2}, and    \ref{tceff}.

\begin{figure}[!h]
\begin{center}
\includegraphics[width=0.28\textwidth]{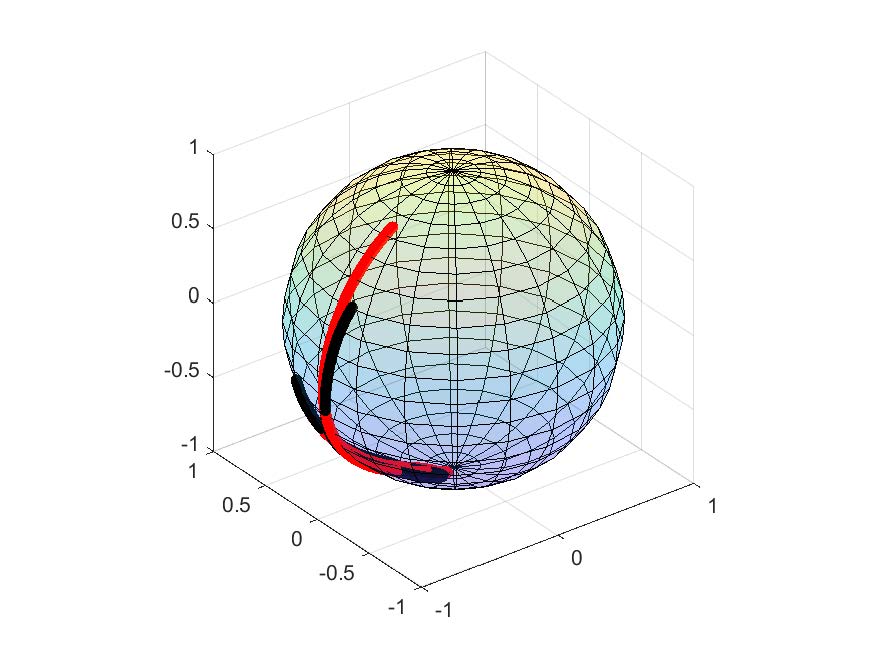}
\includegraphics[width=0.28\textwidth]{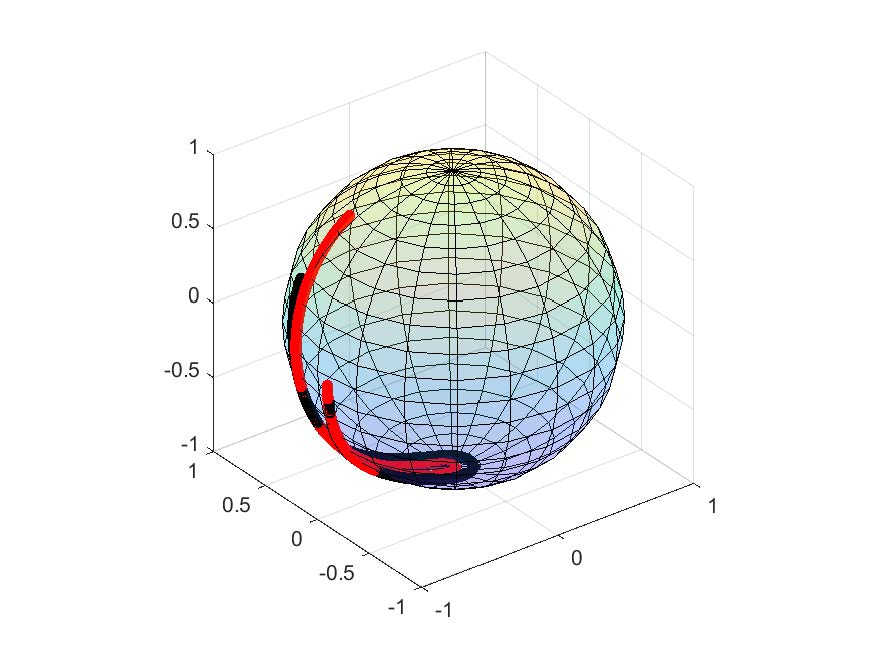}
\includegraphics[width=0.28\textwidth]{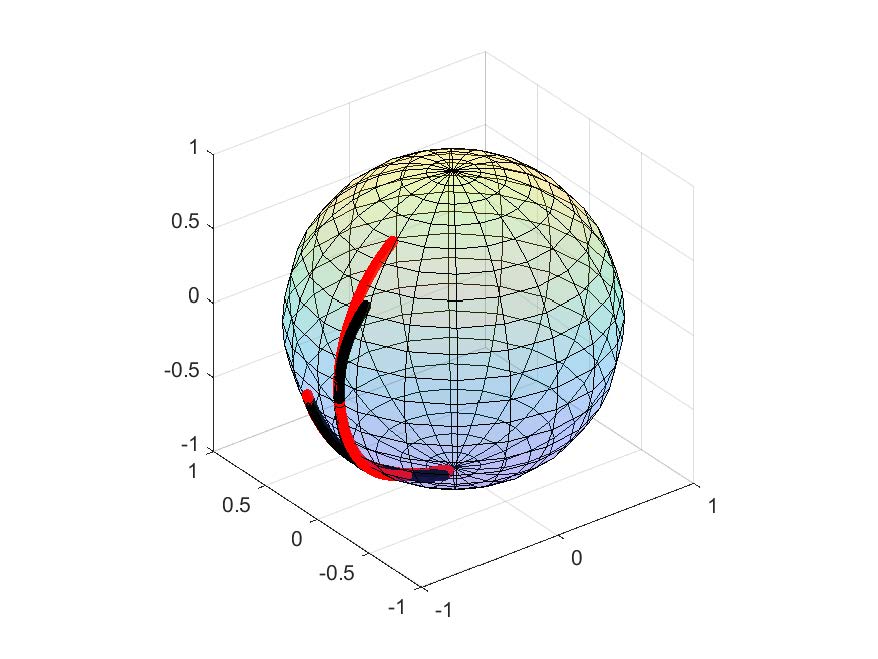}
\includegraphics[width=0.28\textwidth]{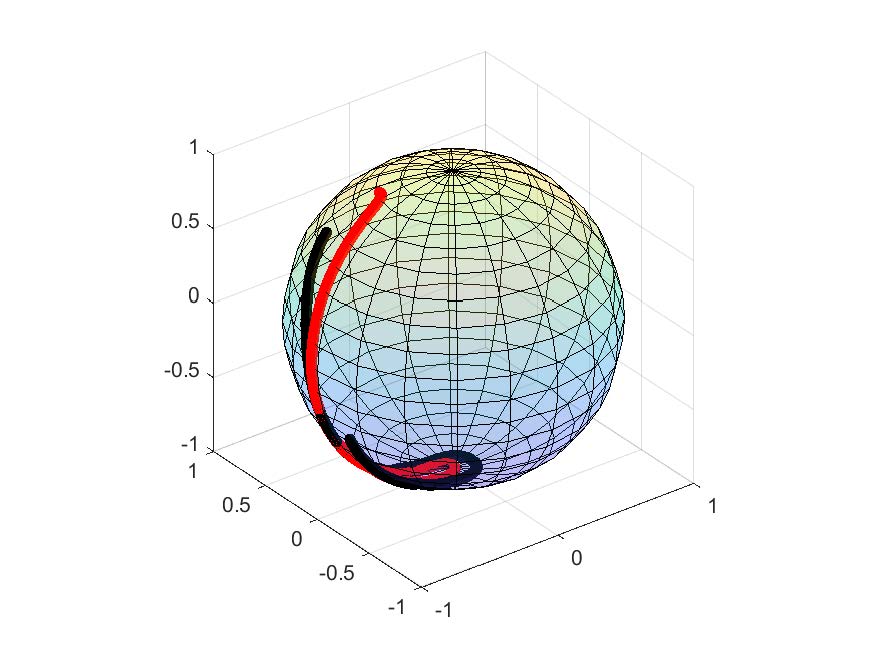}
\includegraphics[width=0.28\textwidth]{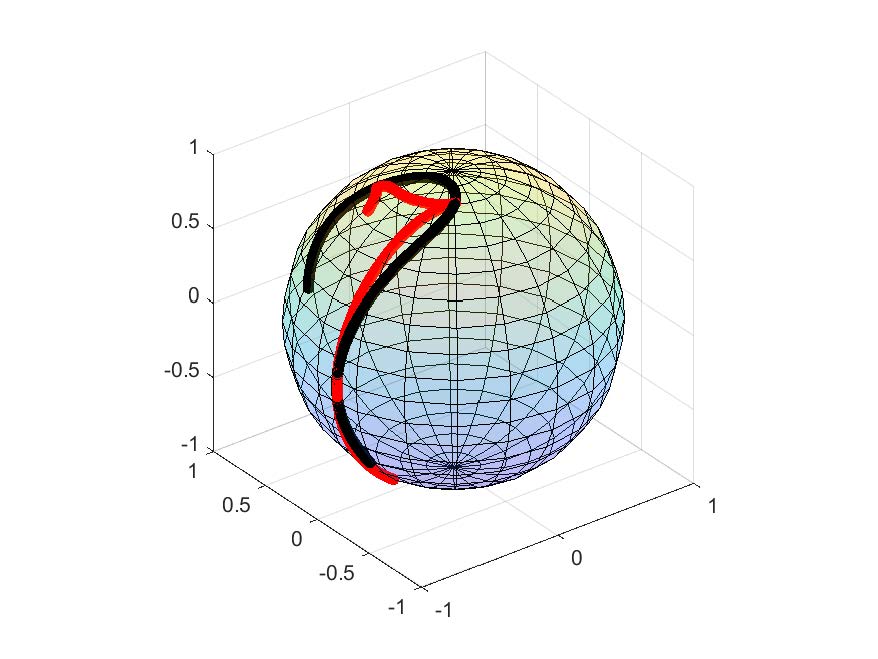}
\includegraphics[width=0.28\textwidth]{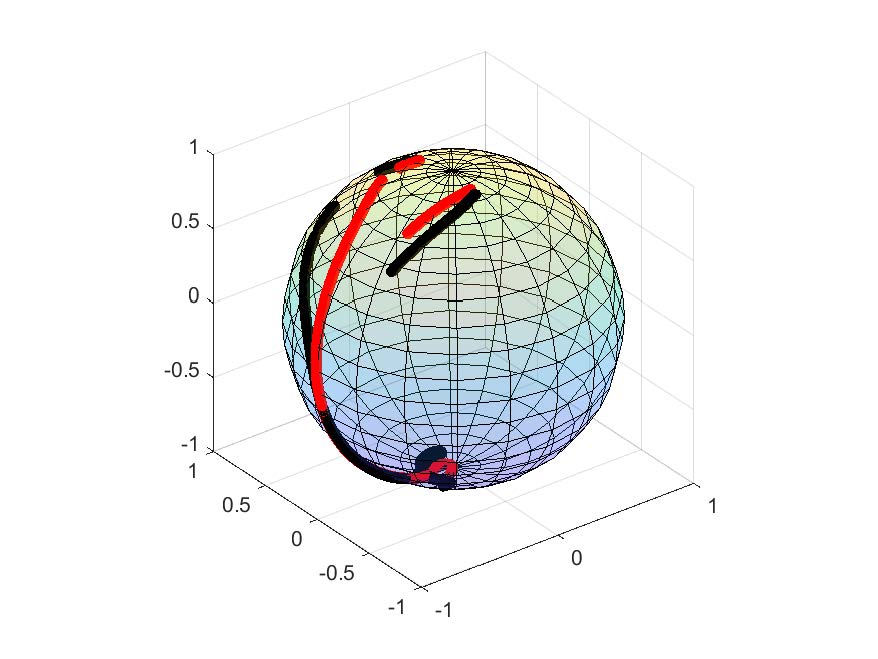}
\includegraphics[width=0.28\textwidth]{Rs_Rg_24iv15_mes_3_M1}
\includegraphics[width=0.28\textwidth]{Rs_Rg_24iv15_mes_3_M15}
\includegraphics[width=0.28\textwidth]{Rs_Rg_24iv15_mes_3_M29}
\includegraphics[width=0.28\textwidth]{Rs_Rg_24iv15_mes_3_M43}
\includegraphics[width=0.28\textwidth]{Rs_Rg_24iv15_mes_3_M57}
\includegraphics[width=0.28\textwidth]{Rs_Rg_24iv15_mes_3_M71}
\end{center}
\caption{ \emph{Spherical bivariate curve data}. Sample functional elements at  $t=1, 15$ \linebreak $29, 43, 57, 71,$ for the months
 December 1979 (lines 1-2) and January 1980 (lines 3-4)
}\label{Fig:2.1}
\end{figure}

\begin{figure}[!h]
\begin{center}
\includegraphics[width=0.28\textwidth]{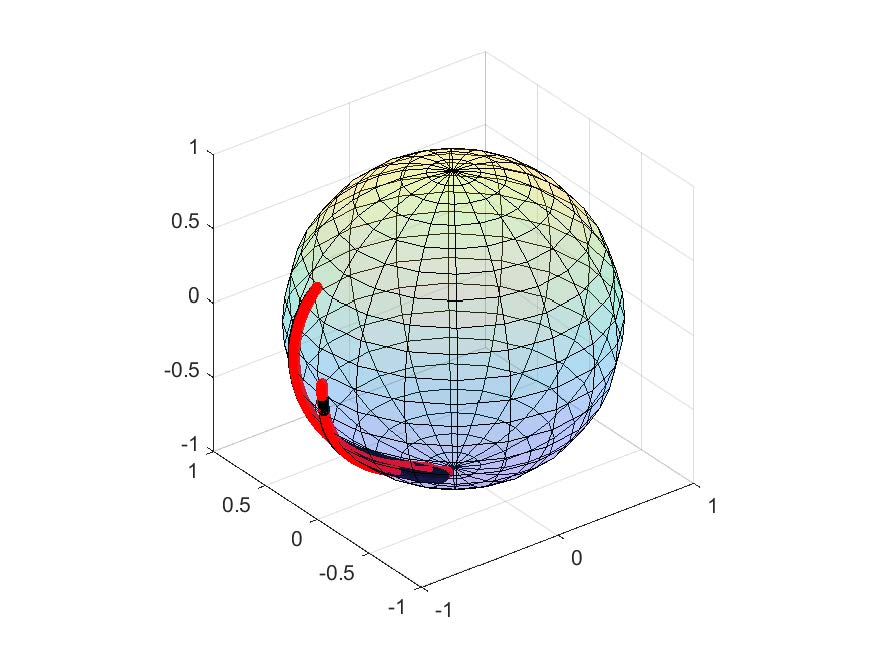}
\includegraphics[width=0.28\textwidth]{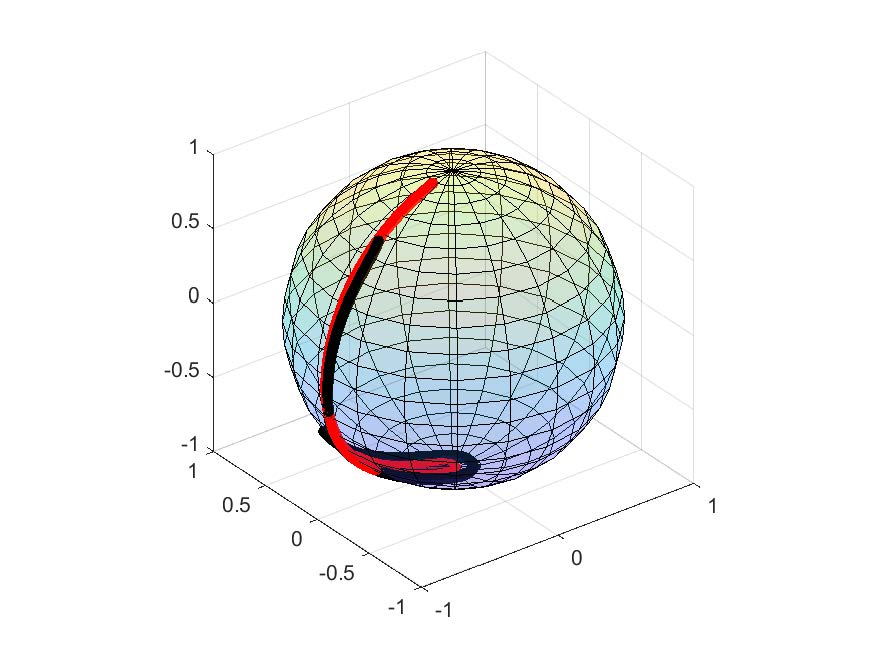}
\includegraphics[width=0.28\textwidth]{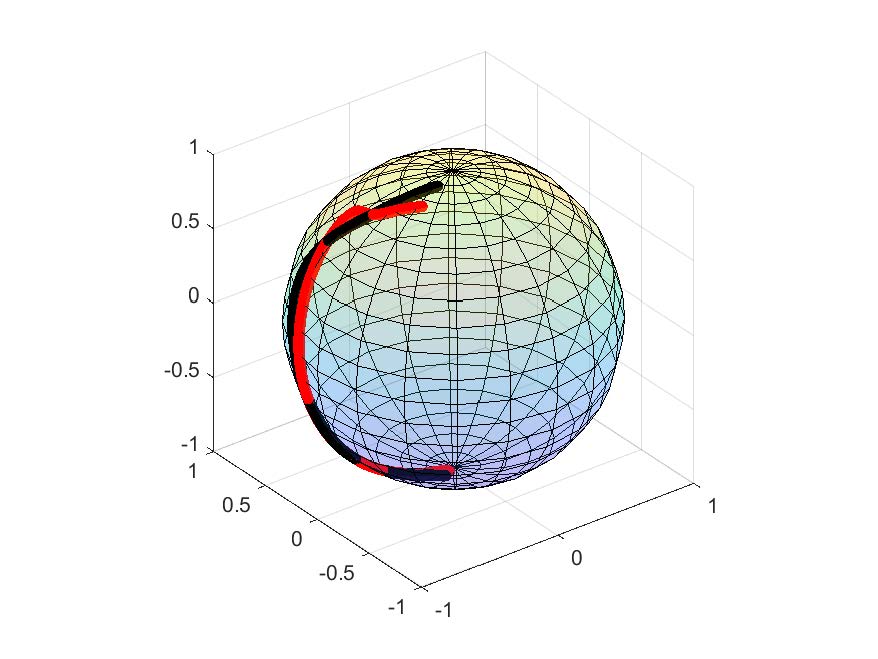}
\includegraphics[width=0.28\textwidth]{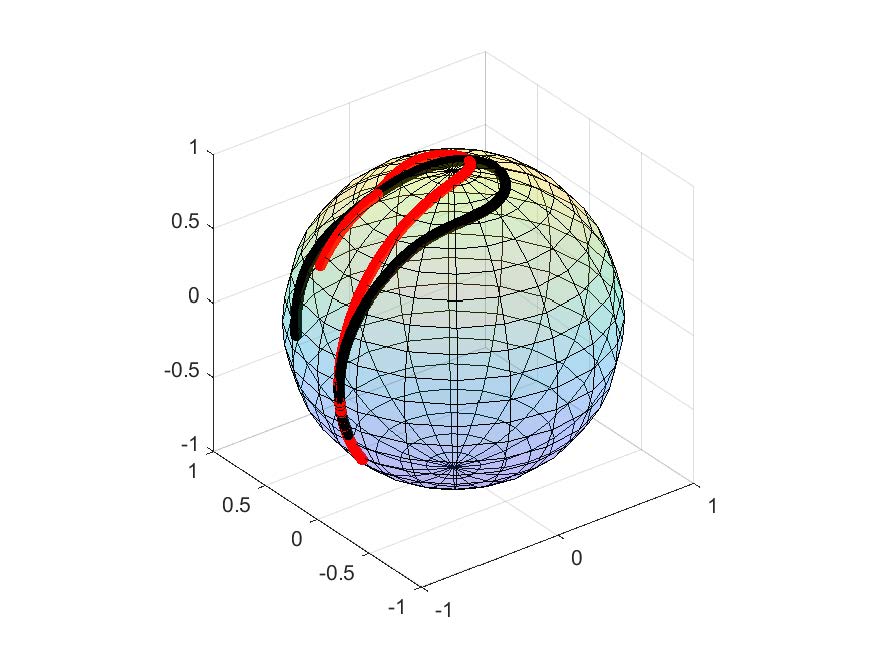}
\includegraphics[width=0.28\textwidth]{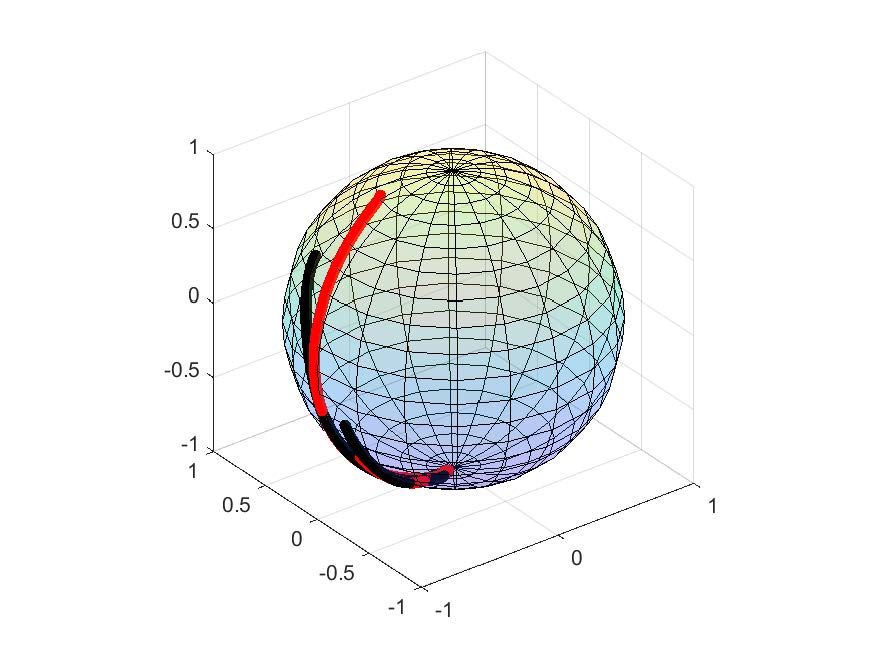}
\includegraphics[width=0.28\textwidth]{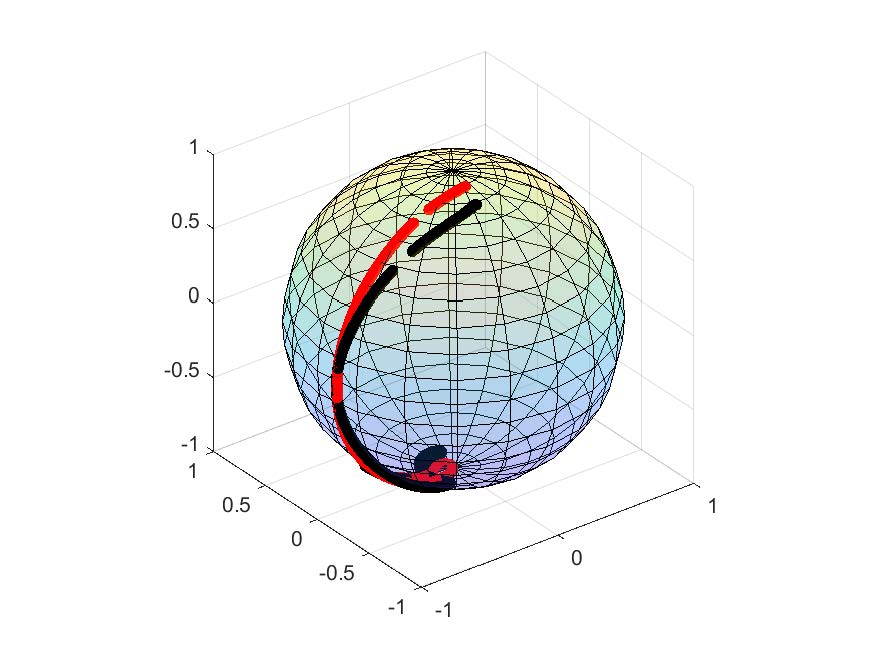}
\includegraphics[width=0.28\textwidth]{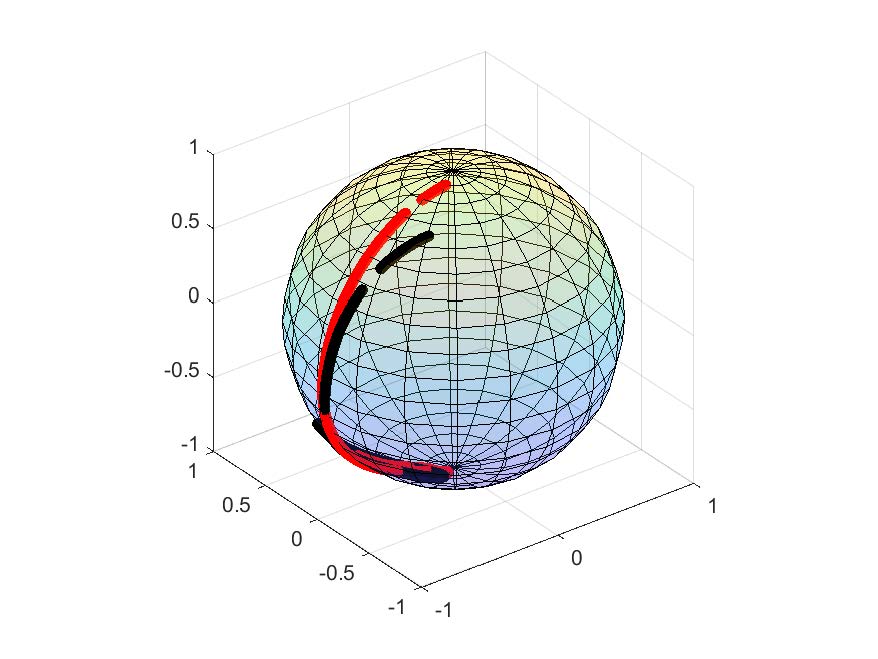}
\includegraphics[width=0.28\textwidth]{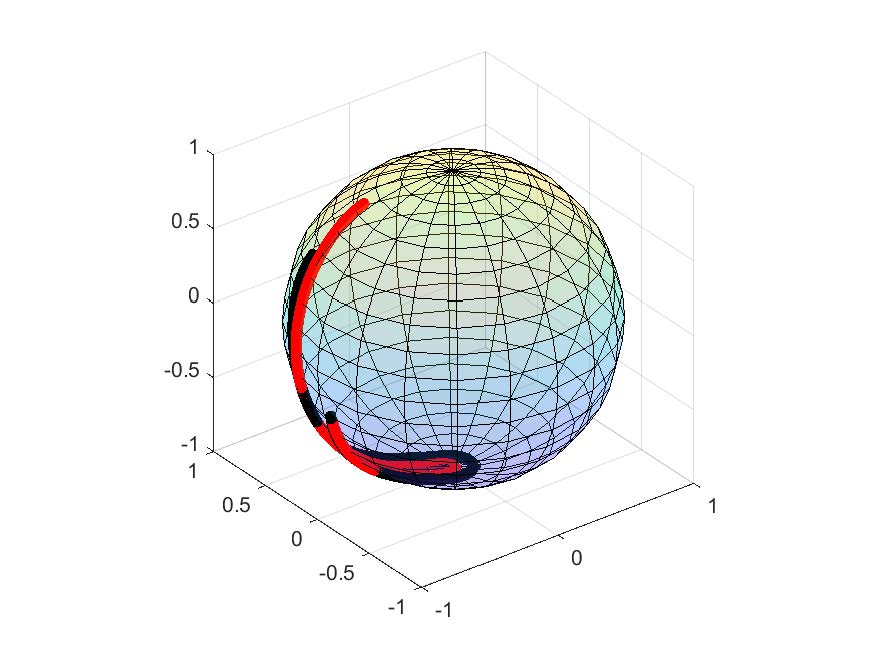}
\includegraphics[width=0.28\textwidth]{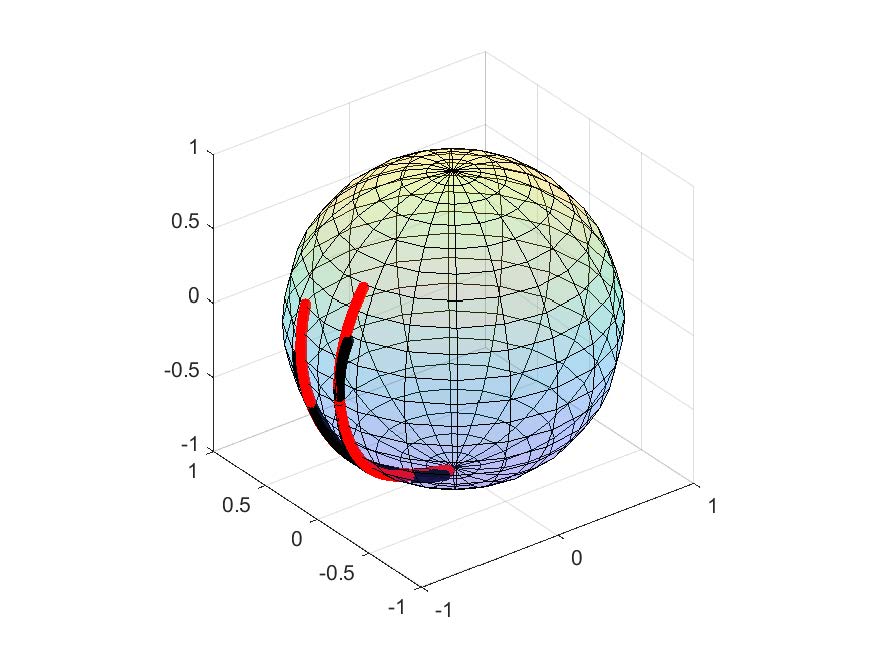}
\includegraphics[width=0.28\textwidth]{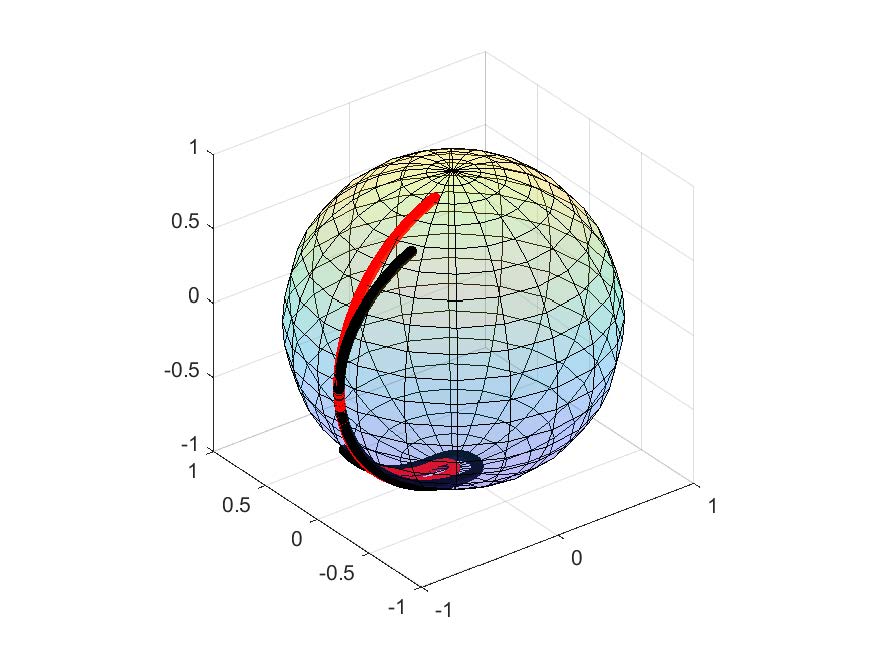}
\includegraphics[width=0.28\textwidth]{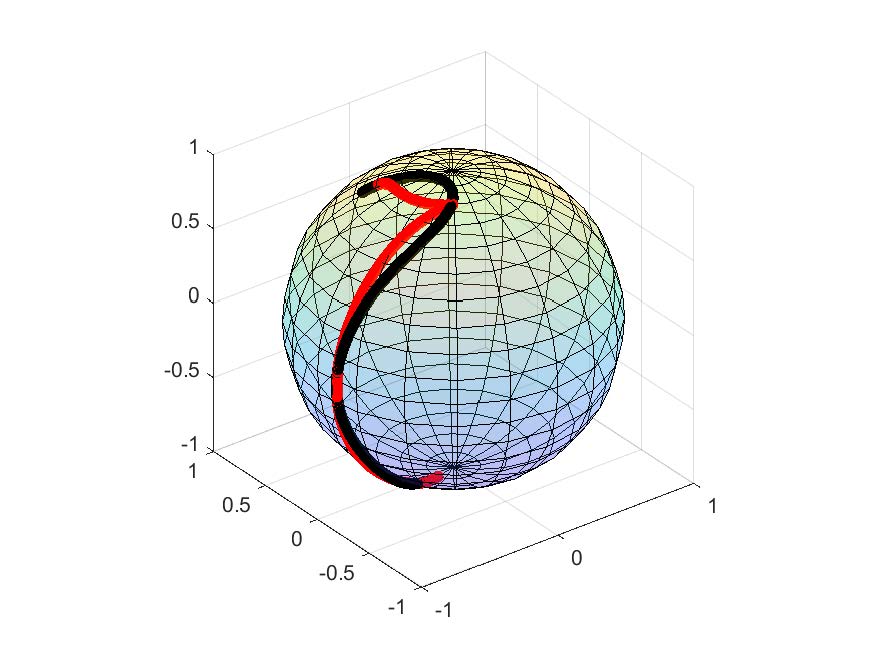}
\includegraphics[width=0.28\textwidth]{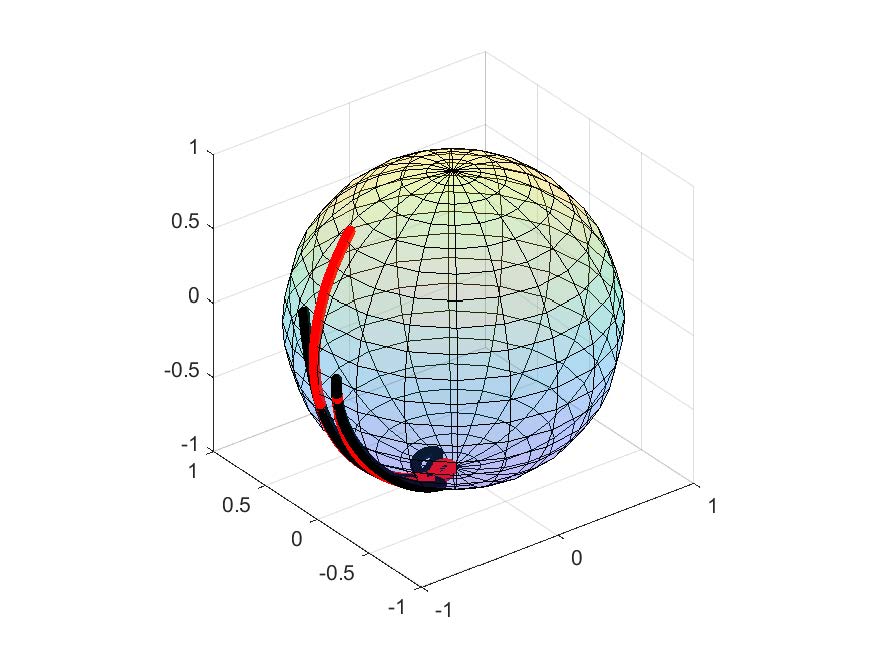}
\includegraphics[width=0.28\textwidth]{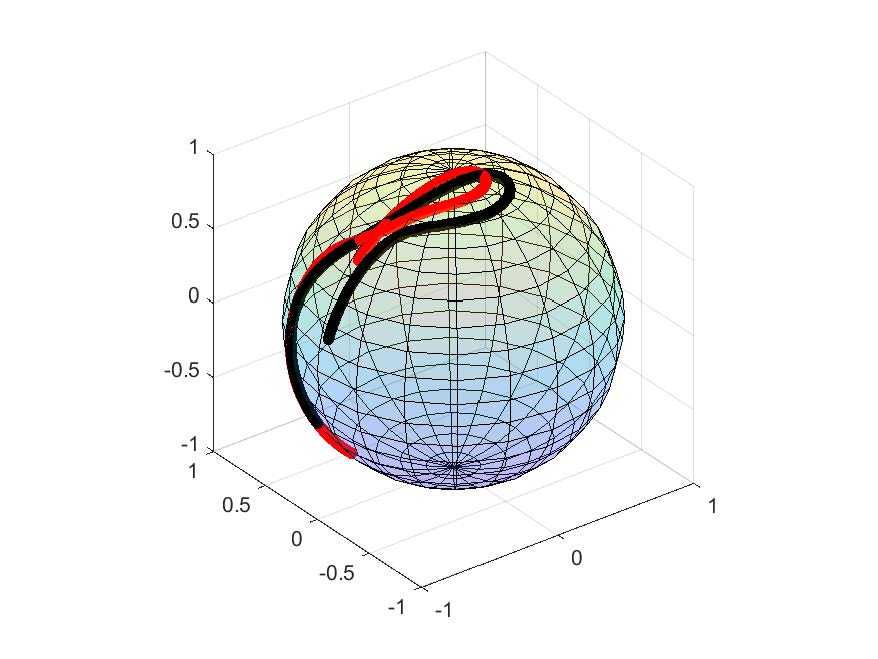}
\includegraphics[width=0.28\textwidth]{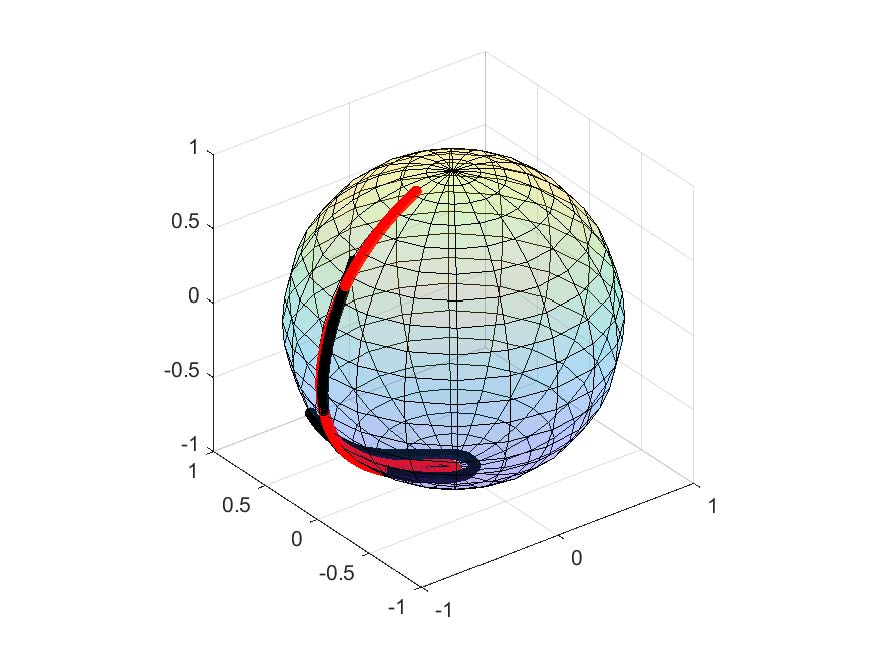}
\includegraphics[width=0.28\textwidth]{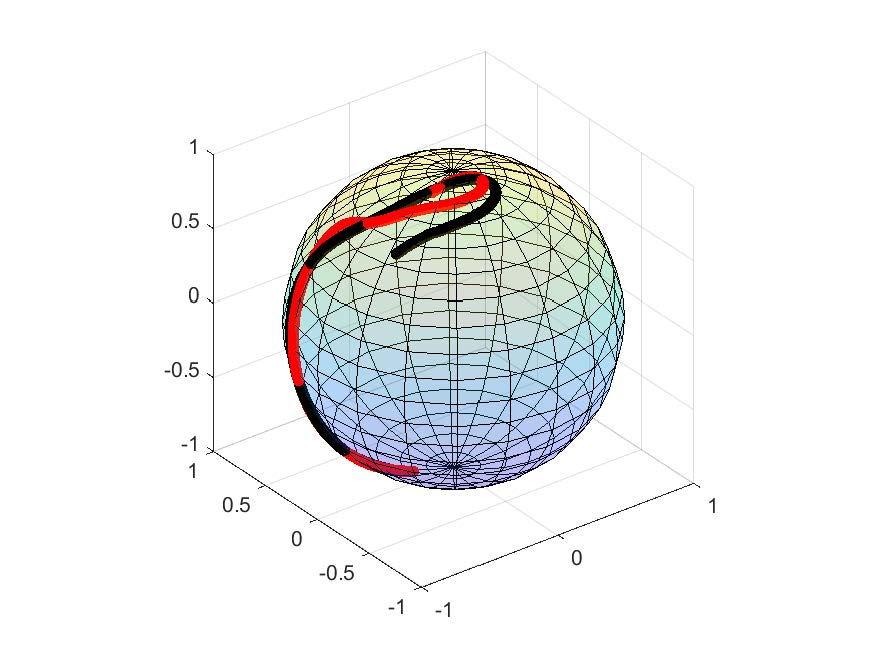}
\includegraphics[width=0.28\textwidth]{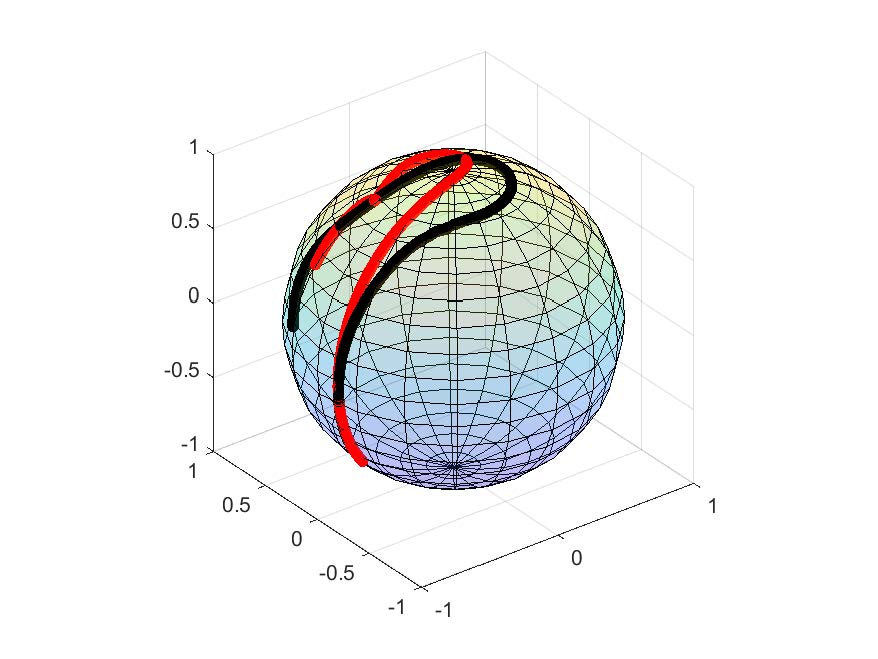}
\includegraphics[width=0.28\textwidth]{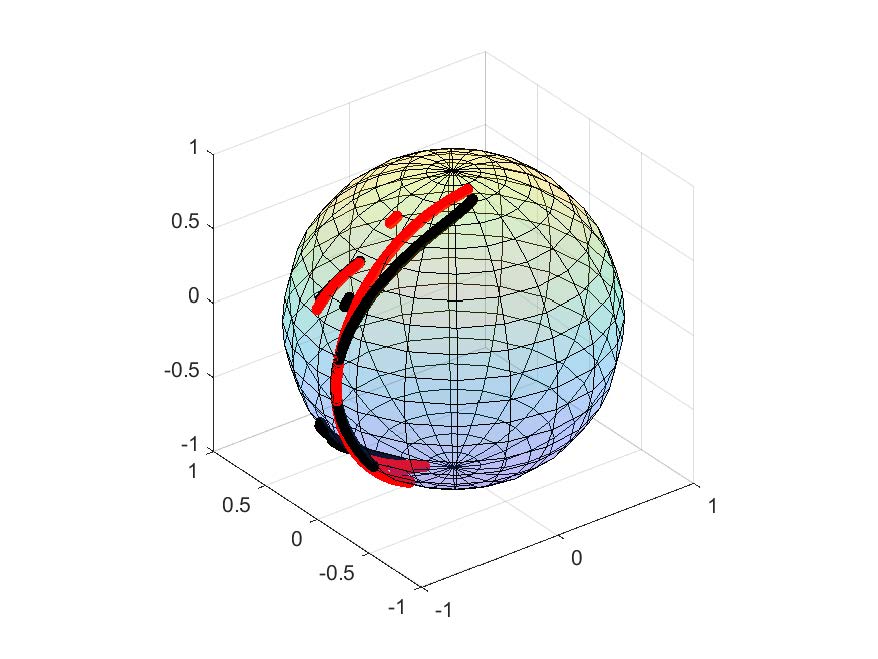}
\includegraphics[width=0.28\textwidth]{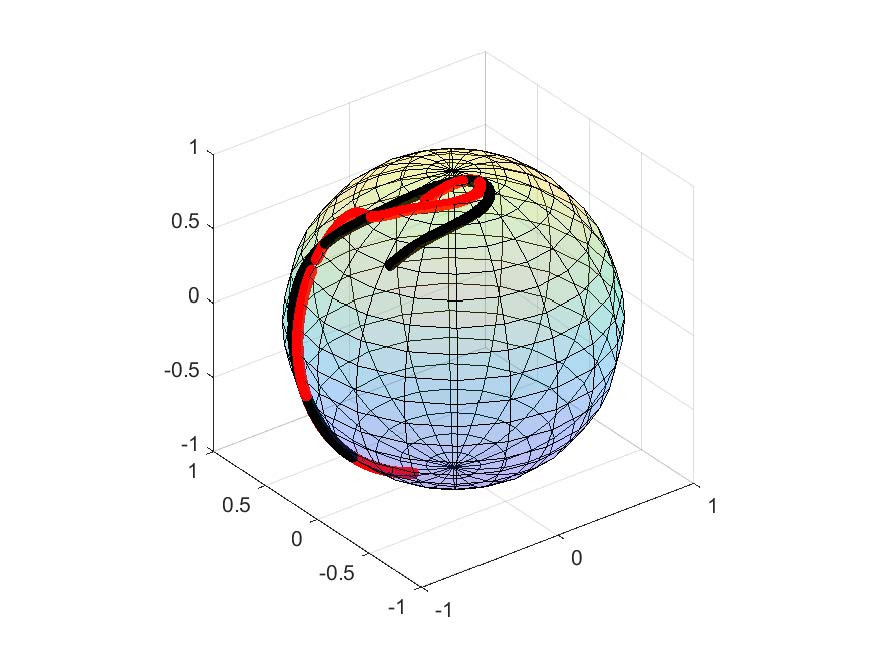}
\end{center}
\caption{ \emph{Spherical bivariate curve data}. Sample functional elements at  $t=1, 15,$ \linebreak $29,  43, 57, 71,$ for the months
 February 1980 (lines 1-2), March 1980 (lines 3-4) and April 1980 (lines 5-6)
}\label{Fig:2.2}
\end{figure}

 \begin{figure}[!h]
\begin{center}
\includegraphics[width=0.28\textwidth]{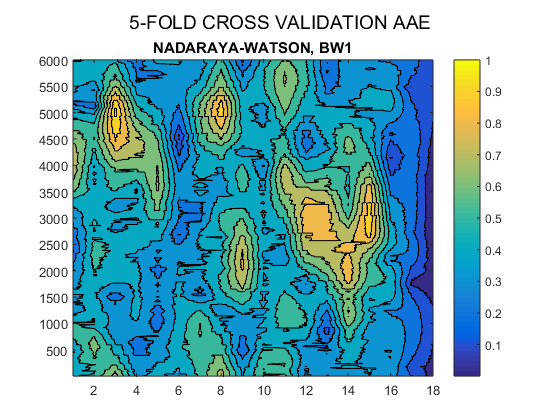}
\includegraphics[width=0.28\textwidth]{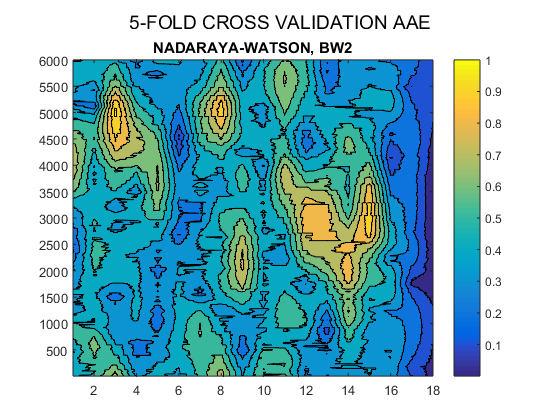}
\includegraphics[width=0.28\textwidth]{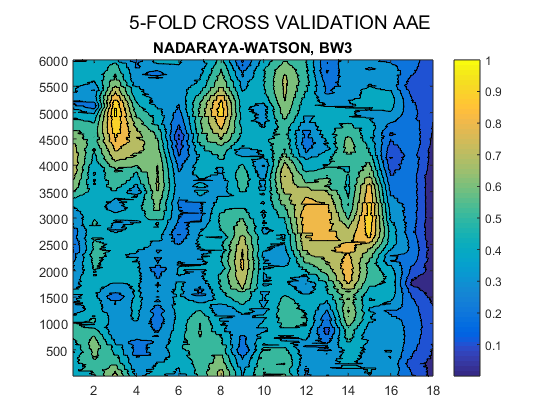}
\includegraphics[width=0.28\textwidth]{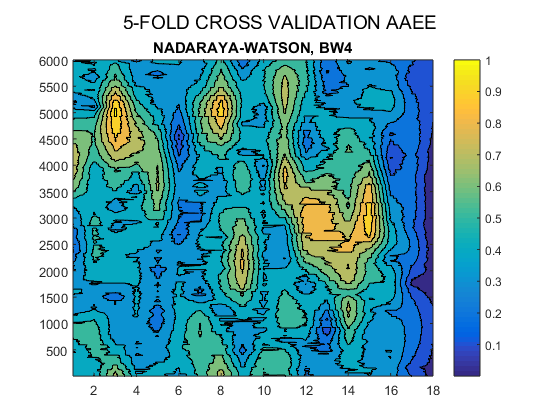}
\includegraphics[width=0.28\textwidth]{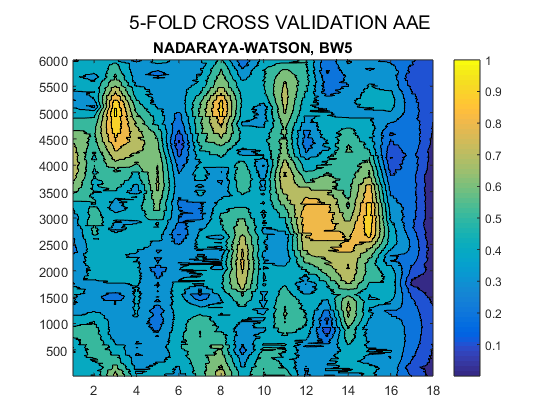}\\
\includegraphics[width=0.28\textwidth]{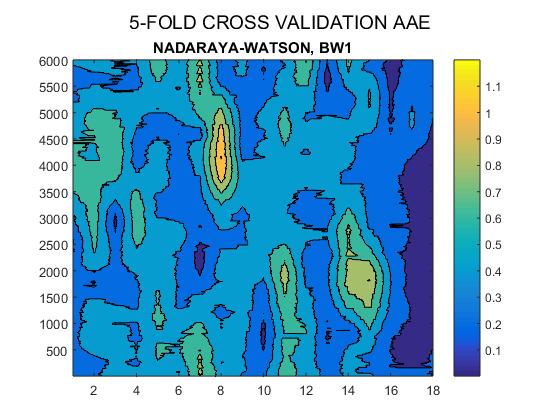}
\includegraphics[width=0.28\textwidth]{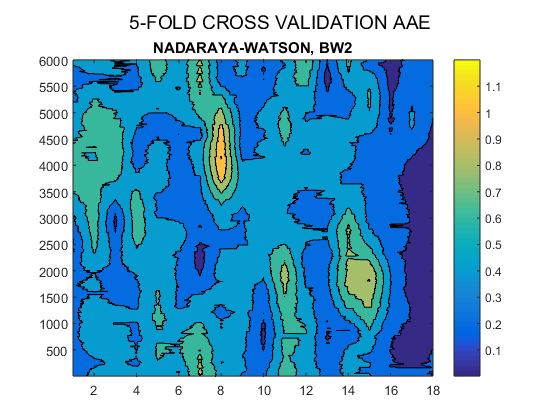}
\includegraphics[width=0.28\textwidth]{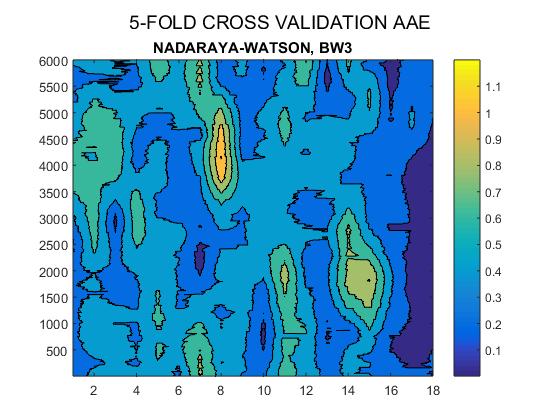}
\includegraphics[width=0.28\textwidth]{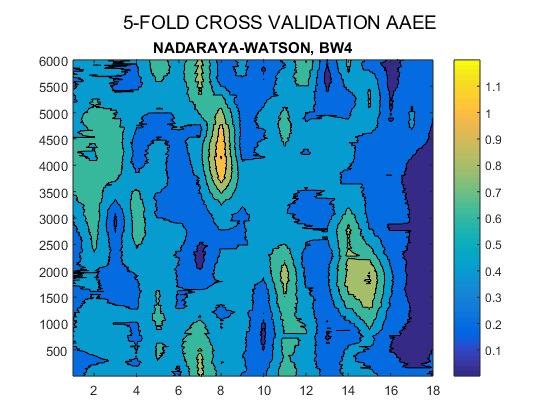}
\includegraphics[width=0.28\textwidth]{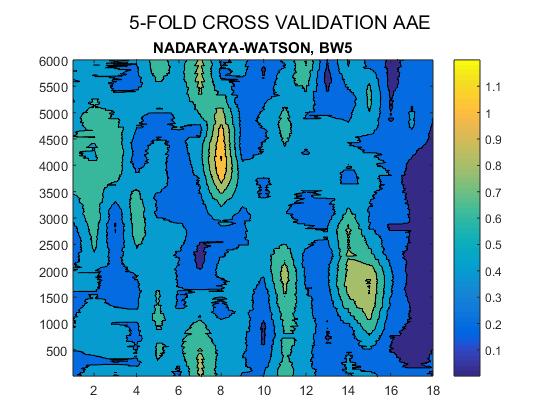}
\end{center}
\caption{\emph{NW-type local   Fr\'echet  regression  curve predictor}.  Contourplots of pointwise values at 6000 temporal nodes of the 5-fold cross-validation geodesic  absolute  curve errors, considering  bandwidths $BW1= 0.2000,$   $BW2= 0.2250,$    $BW3=0.2500$ (top)    $BW4=0.2750,$   $BW5= 0.3000$  (bottom)  for the months   December 1979 (lines 1-2) and January 1980 (lines 3-4)}\label{Fig:2.5}
\end{figure}

\begin{figure}[!h]
\begin{center}
\includegraphics[width=0.28\textwidth]{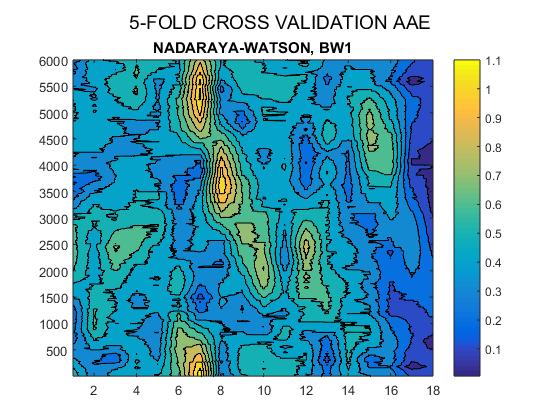}
\includegraphics[width=0.28\textwidth]{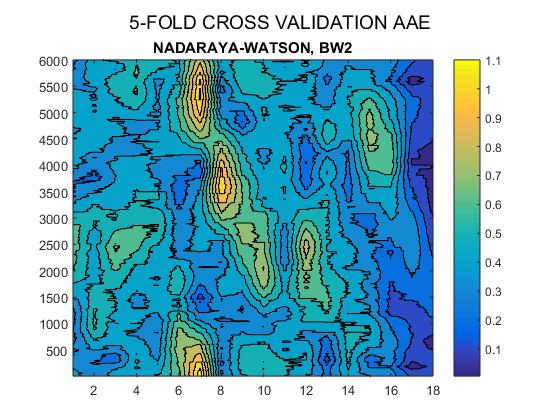}
\includegraphics[width=0.28\textwidth]{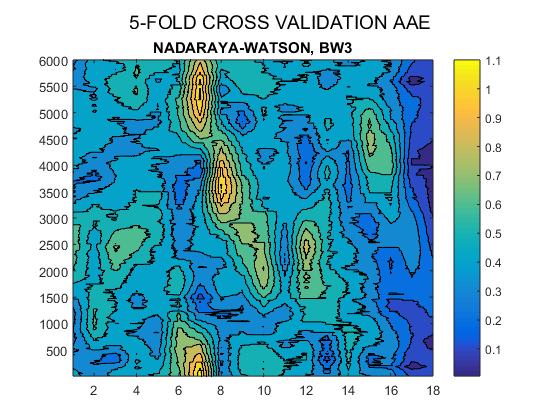}
\includegraphics[width=0.28\textwidth]{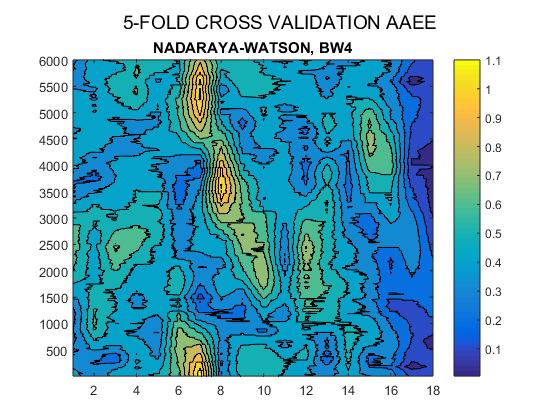}
\includegraphics[width=0.28\textwidth]{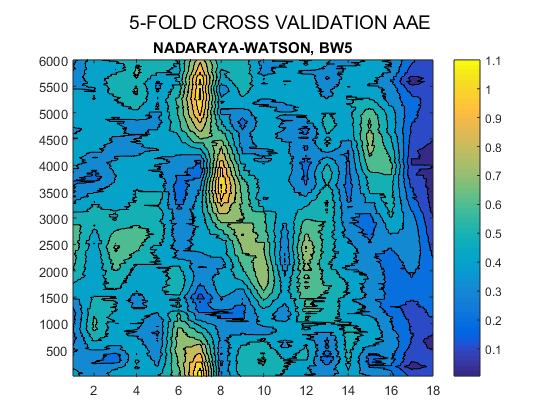}\\
\includegraphics[width=0.28\textwidth]{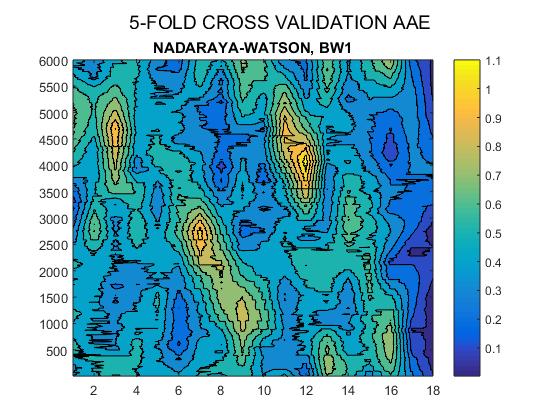}
\includegraphics[width=0.28\textwidth]{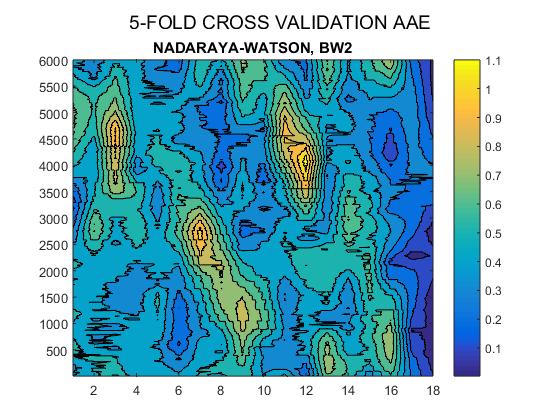}
\includegraphics[width=0.28\textwidth]{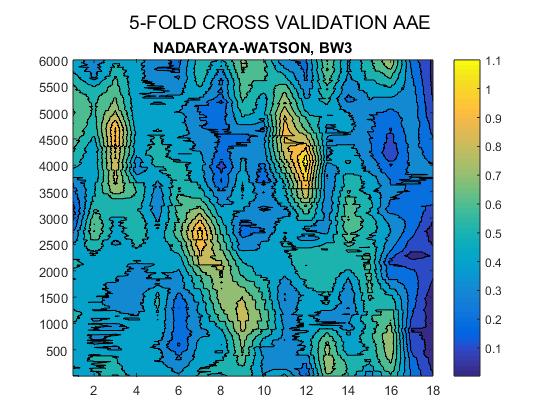}
\includegraphics[width=0.28\textwidth]{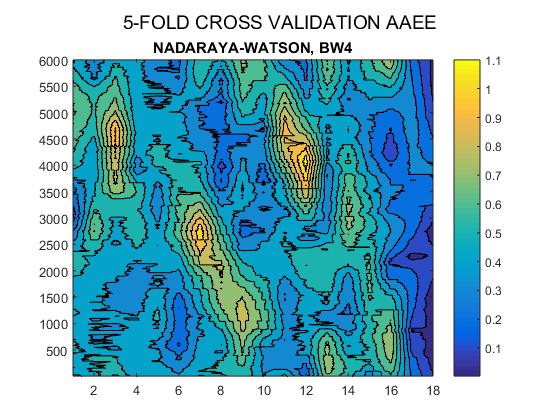}
\includegraphics[width=0.28\textwidth]{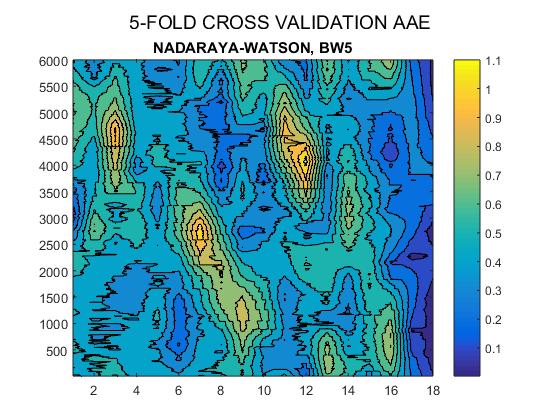}\\
\includegraphics[width=0.28\textwidth]{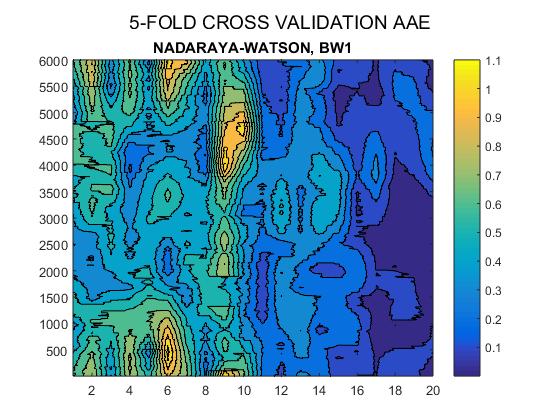}
\includegraphics[width=0.28\textwidth]{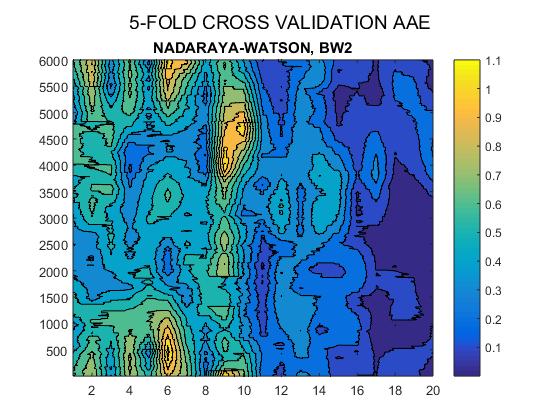}
\includegraphics[width=0.28\textwidth]{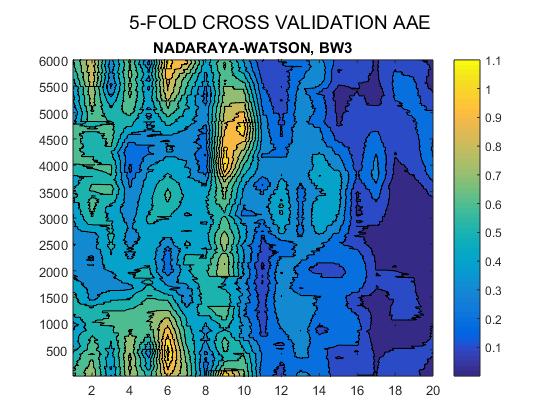}
\includegraphics[width=0.28\textwidth]{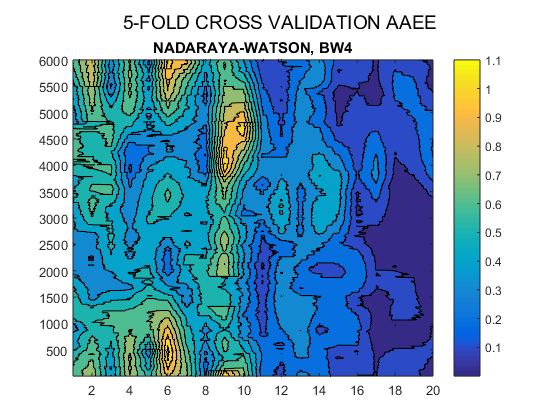}
\includegraphics[width=0.28\textwidth]{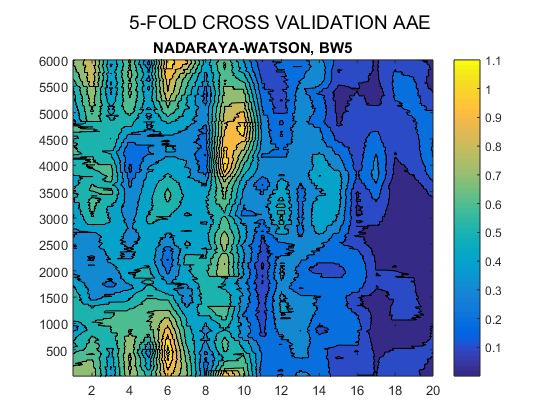}
\end{center}
\caption{\emph{NW--type local  Fr\'echet  regression  curve predictor}. Contourplots of pointwise values at 6000 temporal nodes of the 5-fold cross-validation geodesic  absolute  curve errors, considering  bandwidths $BW1= 0.2000,$   $BW2= 0.2250,$    $BW3=0.2500$ (top)    $BW4=0.2750,$   $BW5= 0.3000$ (bottom), for the months  February 1980 (lines 1-2), March 1980 (lines 3-4) and April 1980 (lines 5-6)}\label{Fig:2.6}
\end{figure}

\begin{figure}[!h]
\begin{center}
\includegraphics[width=0.28\textwidth]{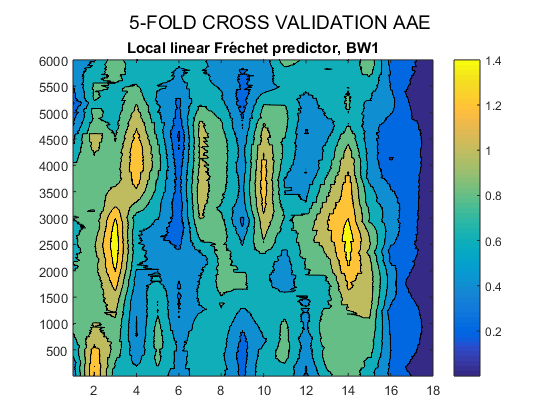}
\includegraphics[width=0.28\textwidth]{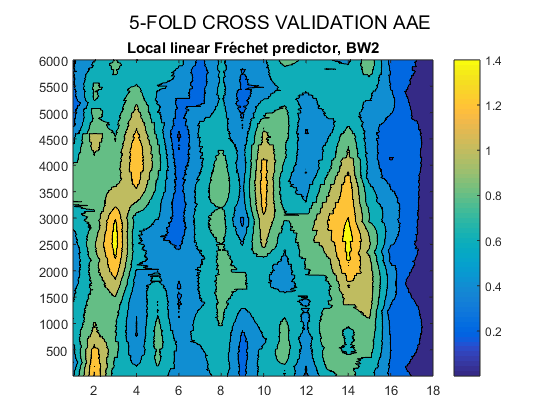}
\includegraphics[width=0.28\textwidth]{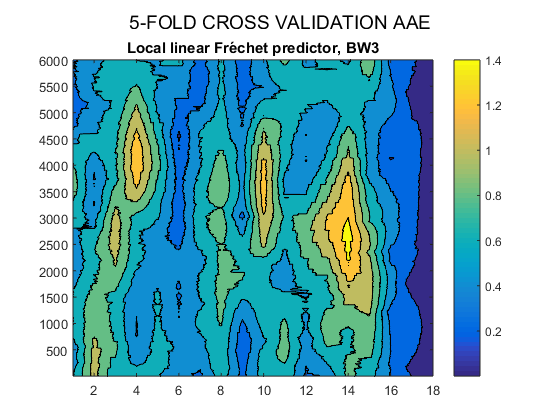}
\includegraphics[width=0.28\textwidth]{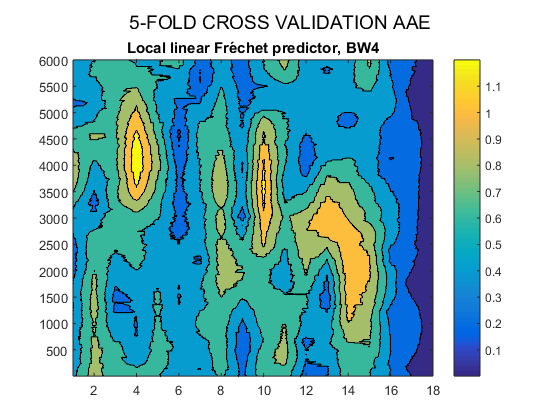}
\includegraphics[width=0.28\textwidth]{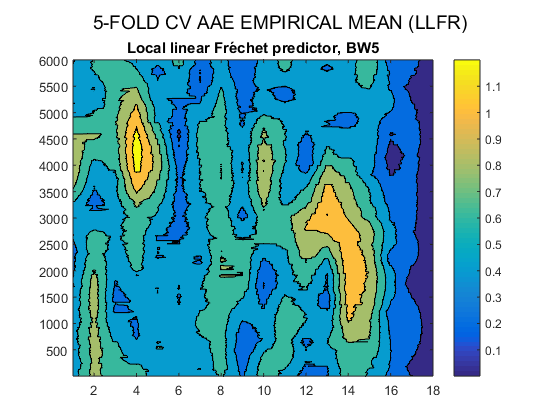}\\
\includegraphics[width=0.28\textwidth]{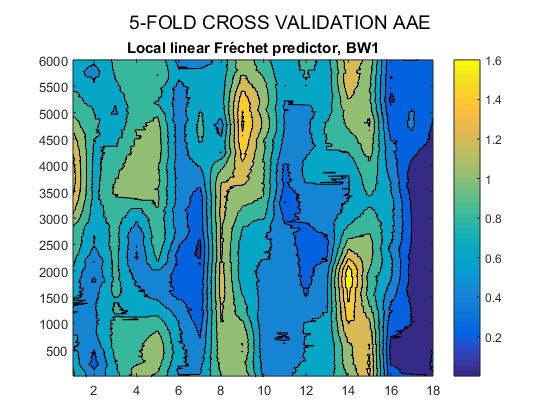}
\includegraphics[width=0.28\textwidth]{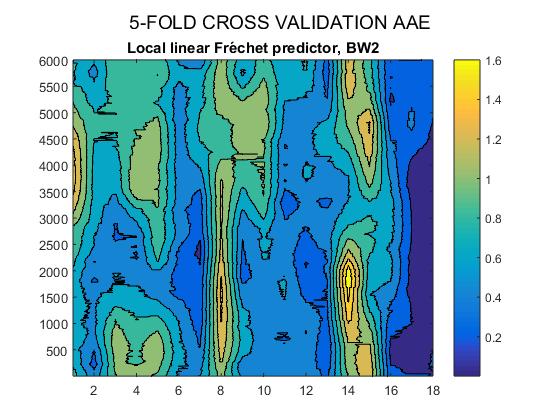}
\includegraphics[width=0.28\textwidth]{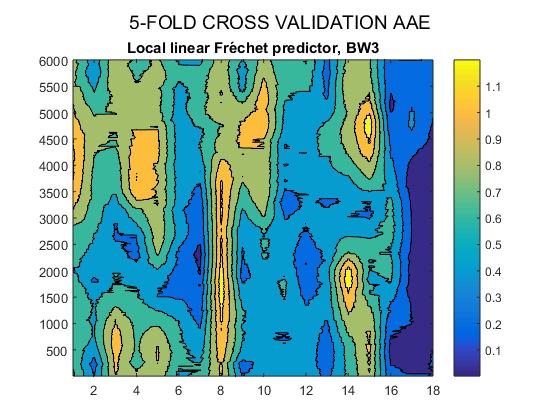}
\includegraphics[width=0.28\textwidth]{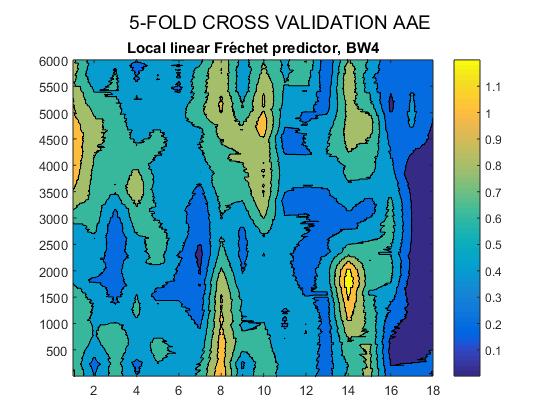}
\includegraphics[width=0.28\textwidth]{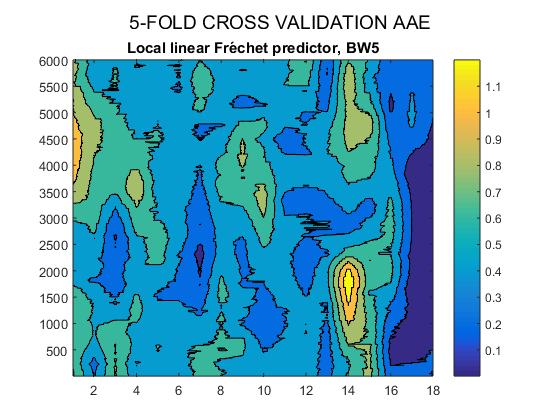}
\end{center}
\caption{ \emph{Intrinsic  local linear  Fr\'echet  regression curve predictor}. Contourplots of pointwise values at 6000 temporal nodes of the 5-fold cross-validation geodesic  absolute  curve errors, considering  bandwidths $BW1= 0.2000,$  $ BW2= 0.2250,$    $BW3=0.2500$ (top)   $BW4=0.2750,$   $BW5= 0.3000$ (bottom), for the months    December 1979 (lines 1-2)  and January 1980 (lines 3-4)
}\label{Fig:2.3}
\end{figure}

\begin{figure}[!h]
\begin{center}
\includegraphics[width=0.28\textwidth]{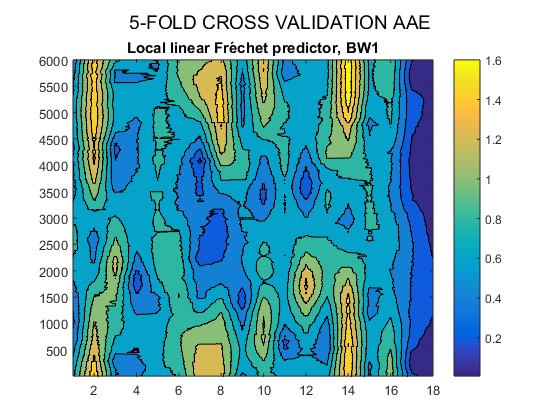}
\includegraphics[width=0.28\textwidth]{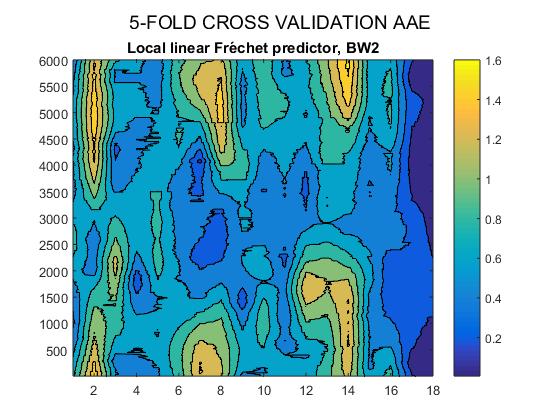}
\includegraphics[width=0.28\textwidth]{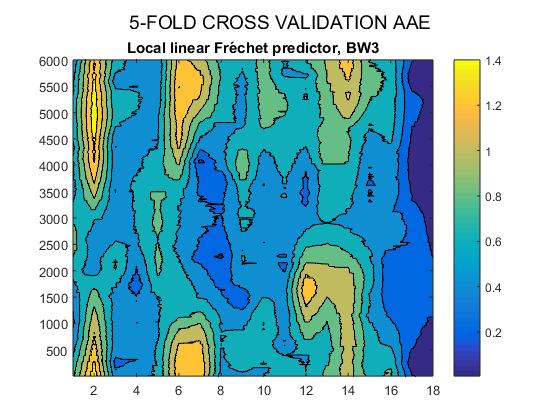}
\includegraphics[width=0.28\textwidth]{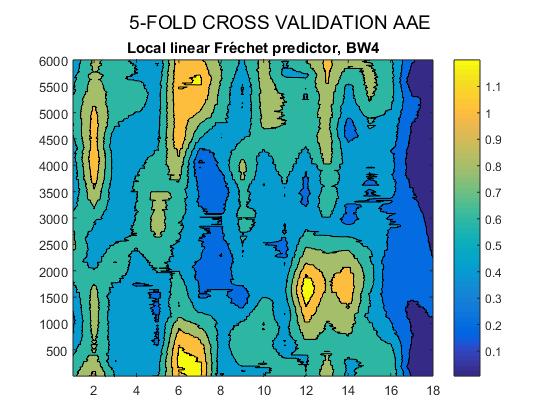}
\includegraphics[width=0.28\textwidth]{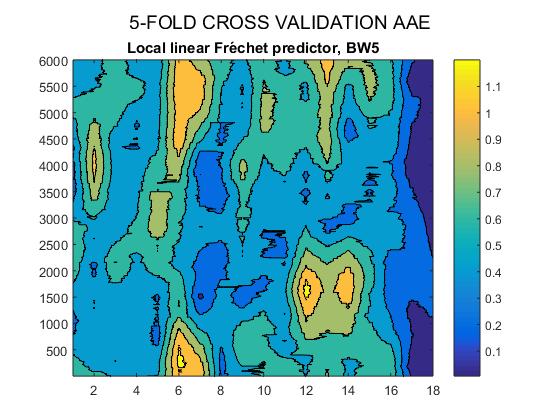}\\
\includegraphics[width=0.28\textwidth]{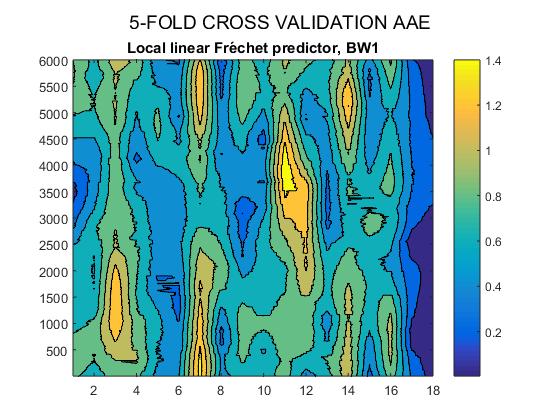}
\includegraphics[width=0.28\textwidth]{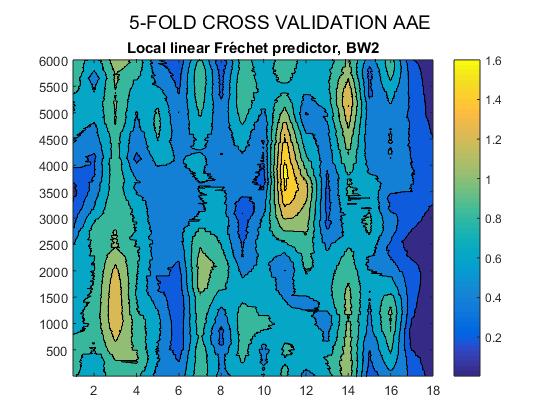}
\includegraphics[width=0.28\textwidth]{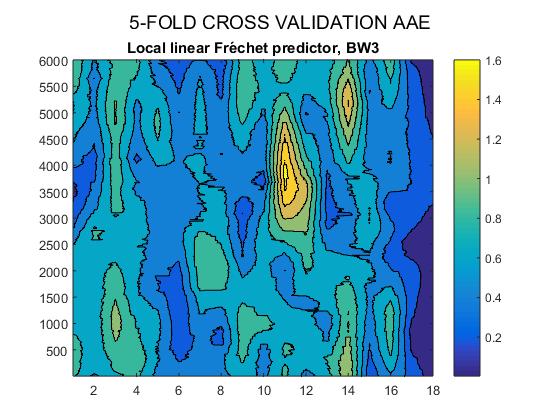}
\includegraphics[width=0.28\textwidth]{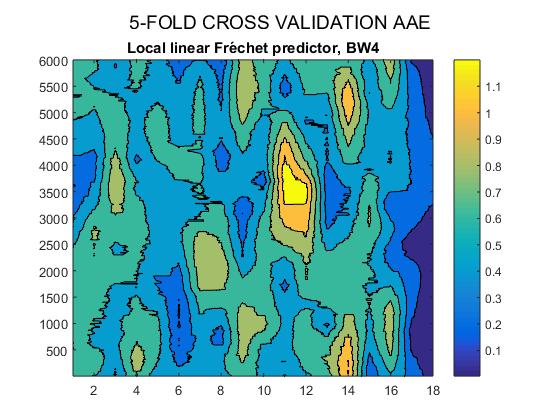}
\includegraphics[width=0.28\textwidth]{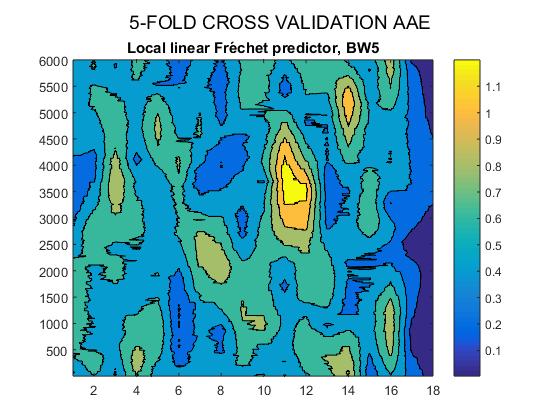}\\
\includegraphics[width=0.28\textwidth]{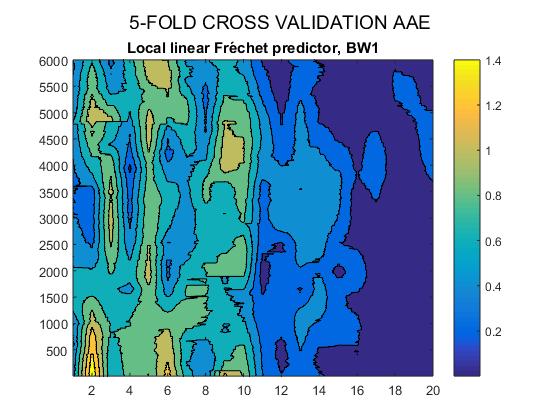}
\includegraphics[width=0.28\textwidth]{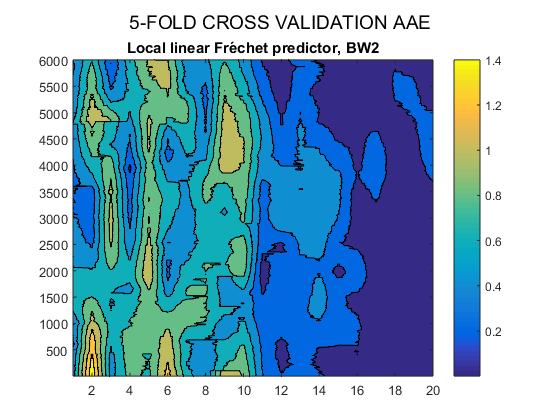}
\includegraphics[width=0.28\textwidth]{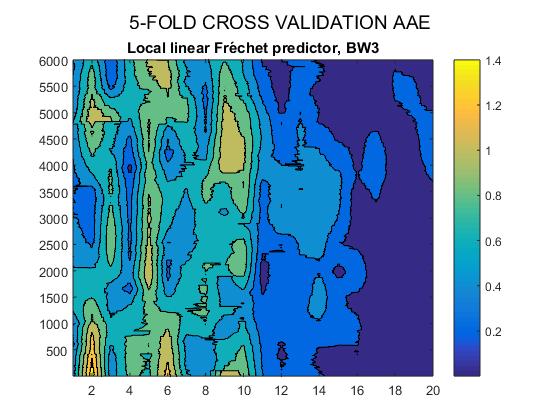}
\includegraphics[width=0.28\textwidth]{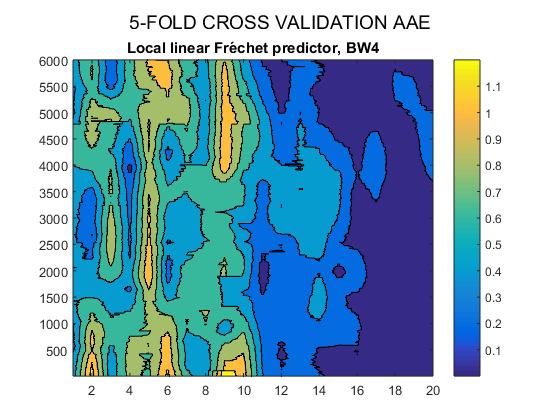}
\includegraphics[width=0.28\textwidth]{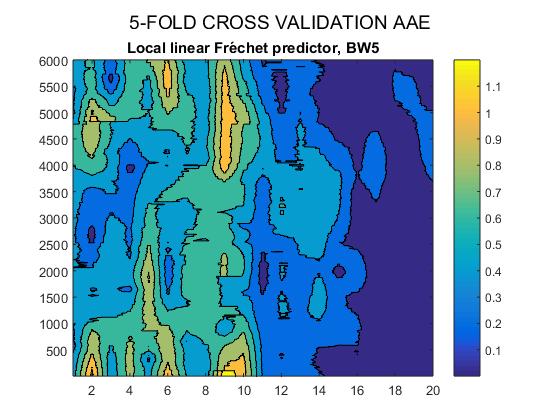}
\end{center}
\caption{ \emph{Intrinsic  local linear  Fr\'echet   regression curve predictor}. Contourplots of pointwise values at 6000 temporal nodes 5-fold cross-validation geodesic  absolute  curve errors, considering  bandwidths $BW1= 0.2000,$  $ BW2= 0.2250,$    $BW3=0.2500$ (top)   $BW4=0.2750,$   $BW5= 0.3000$ (bottom), for the months  February 1980 (lines 1-2), March 1980 (lines 3-4) and April 1980  (lines 5-6)
}\label{Fig:2.4}
\end{figure}

\begin{figure}[!h]
\begin{center}
\includegraphics[width=0.48\textwidth]{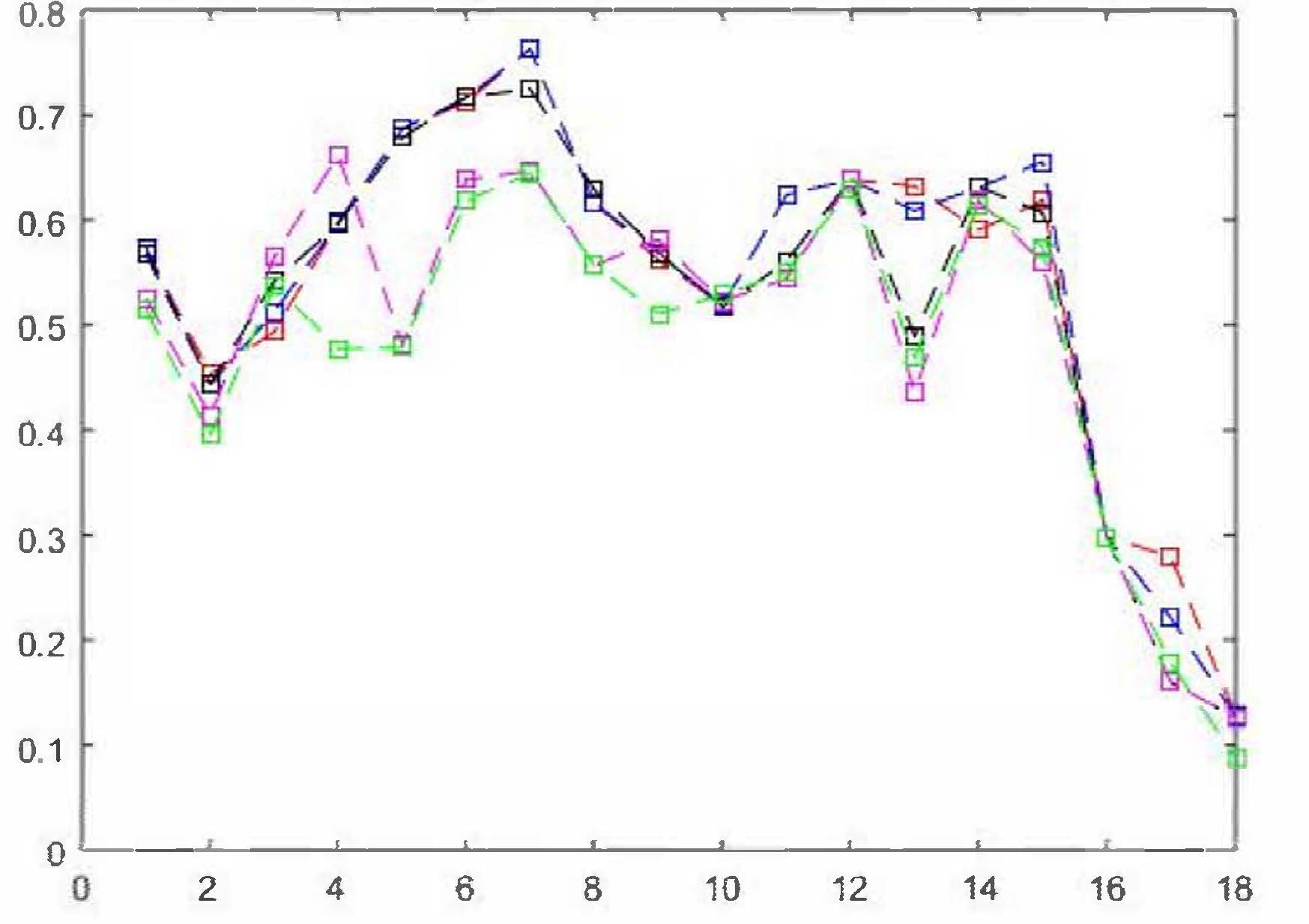}
\includegraphics[width=0.48\textwidth]{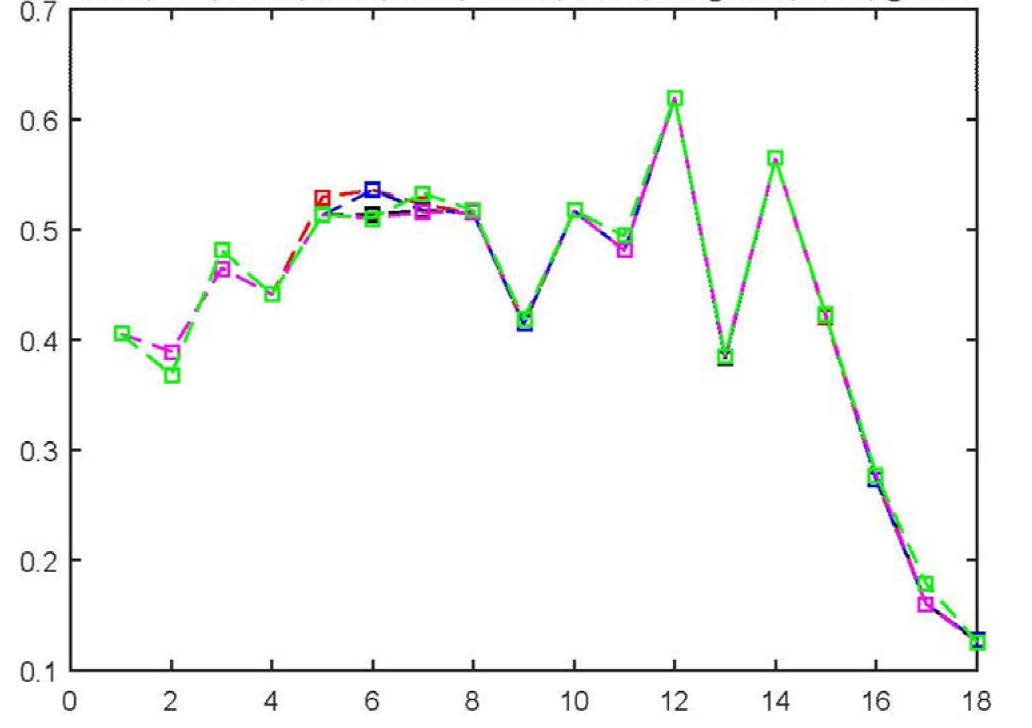}
\includegraphics[width=0.48\textwidth]{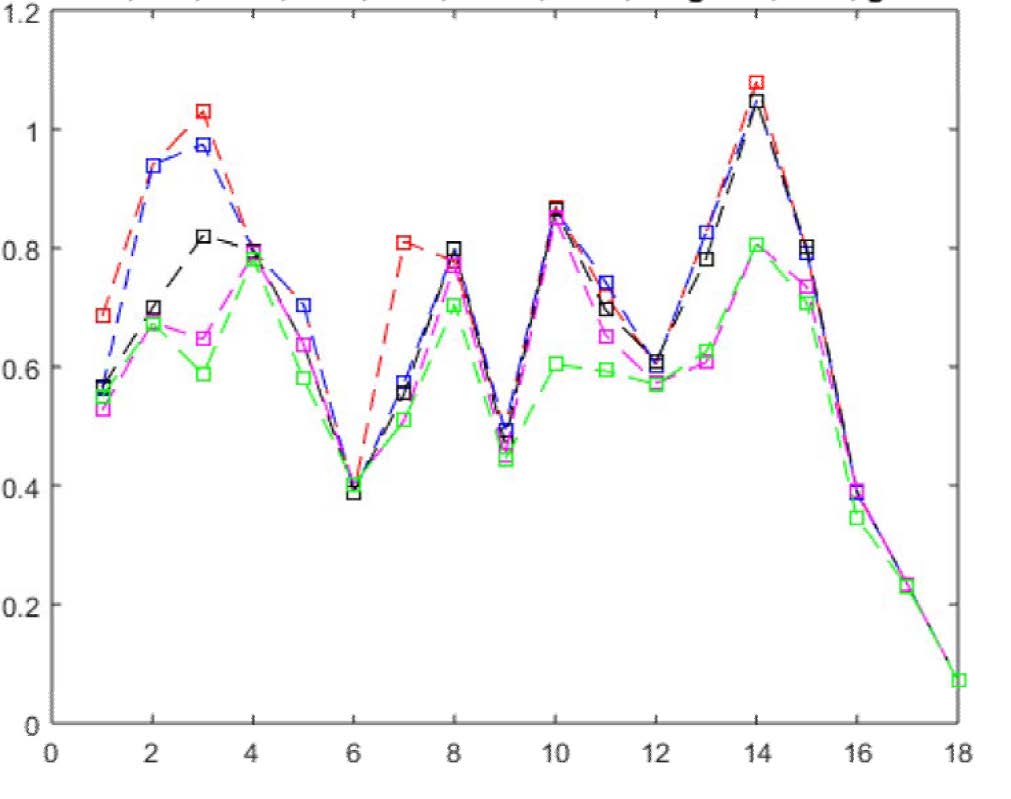}
\includegraphics[width=0.48\textwidth]{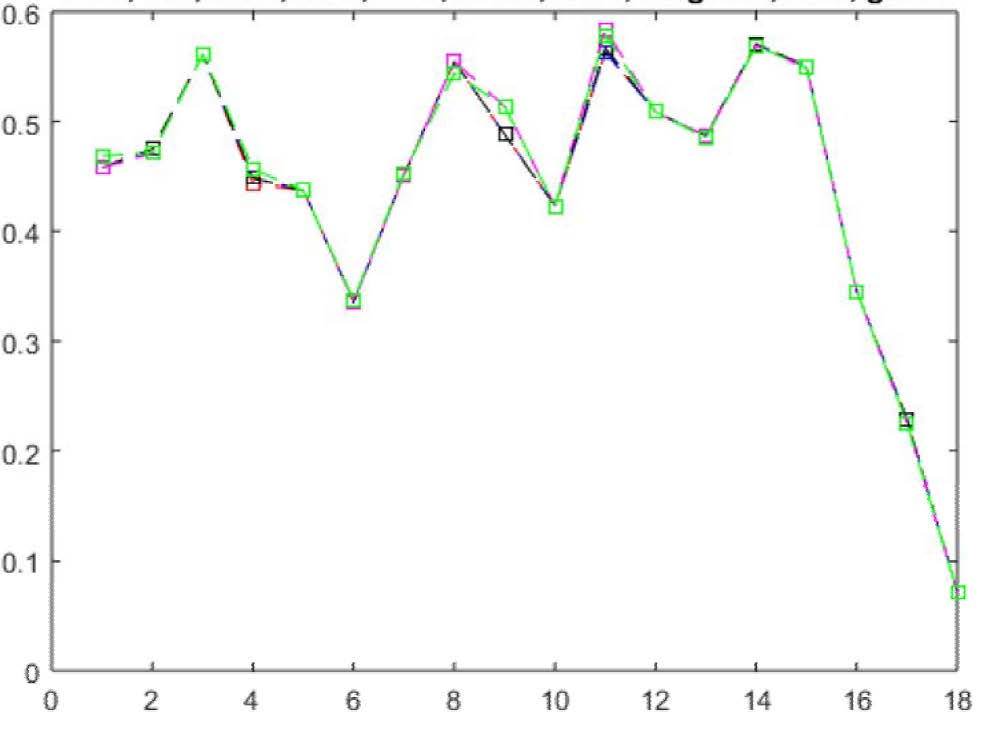}
\includegraphics[width=0.48\textwidth]{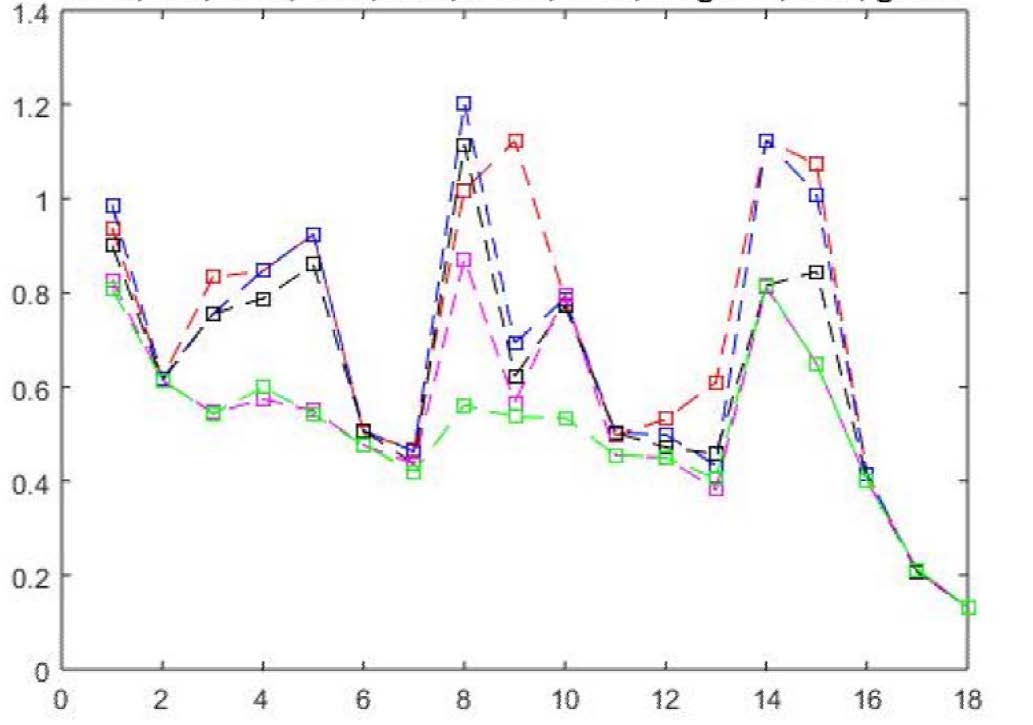}
\includegraphics[width=0.48\textwidth]{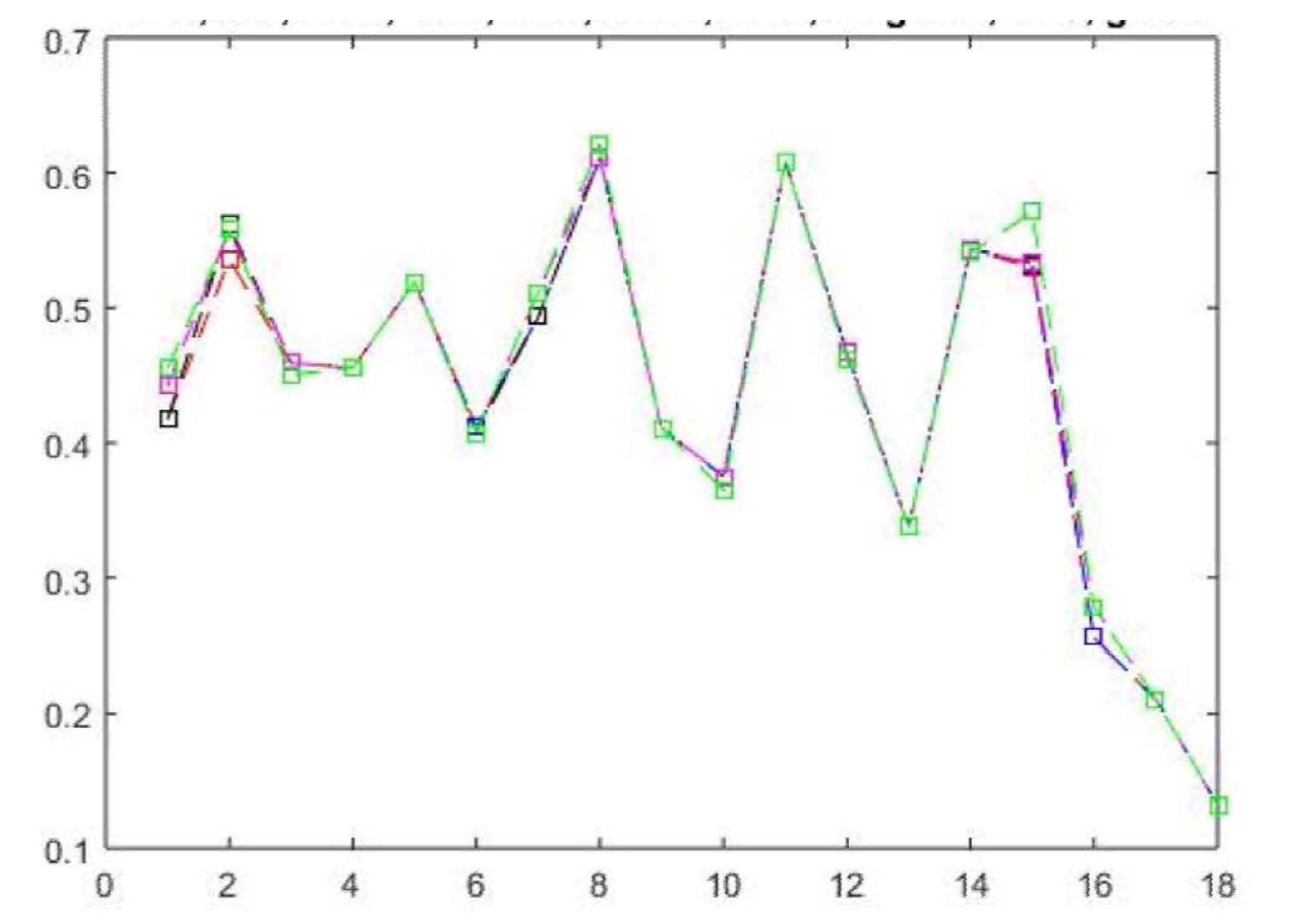}
\end{center}
\caption{ \emph{Intrinsic   local linear, and NW--type  local Fr\'echet curve regression predictors}. Empirical mean inter-curve variability of the 5-fold cross-validation geodesic  absolute  curve errors for local linear Fr\'echet predictor (left-hand side) and NW-type local curve predictor (right-hand side), for the months  November 1979, December 1979 and January 1980 from top to bottom. Bandwidth parameter tested $BW1= 0.2000$ (red),  $ BW2= 0.2250$ (blue)  $BW3=0.2500$ (black)   $BW4=0.2750$ (magenta)   $BW5= 0.3000$ (green)}
\label{Fig:2.7}
\end{figure}

\begin{figure}[!h]
\begin{center}
\includegraphics[width=0.48\textwidth]{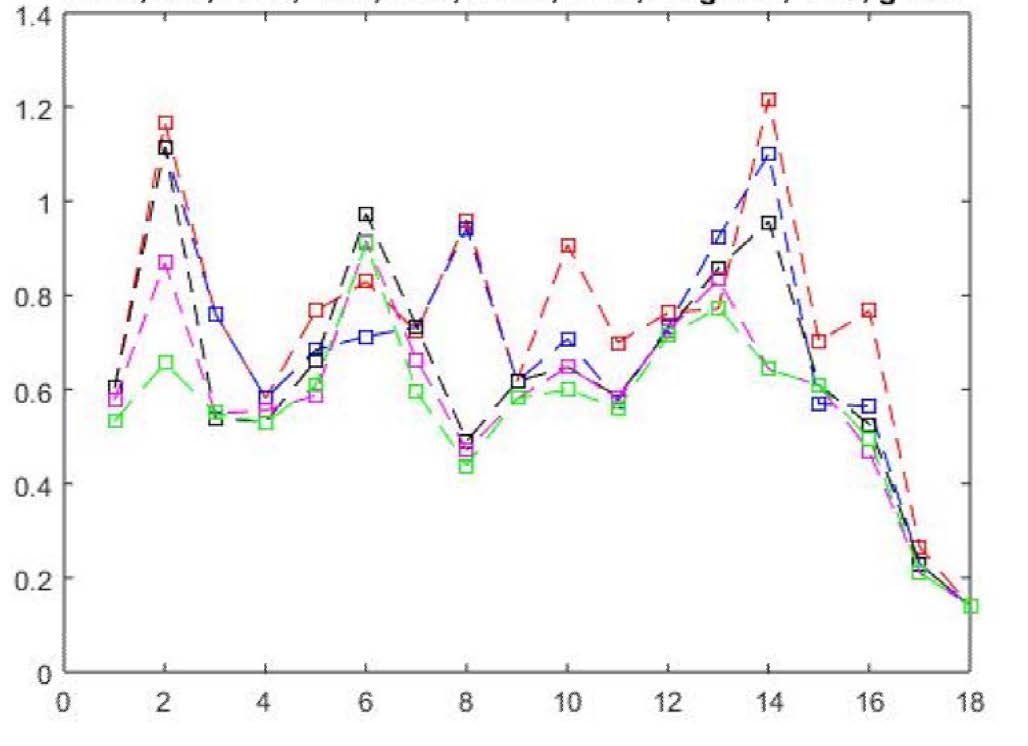}
\includegraphics[width=0.48\textwidth]{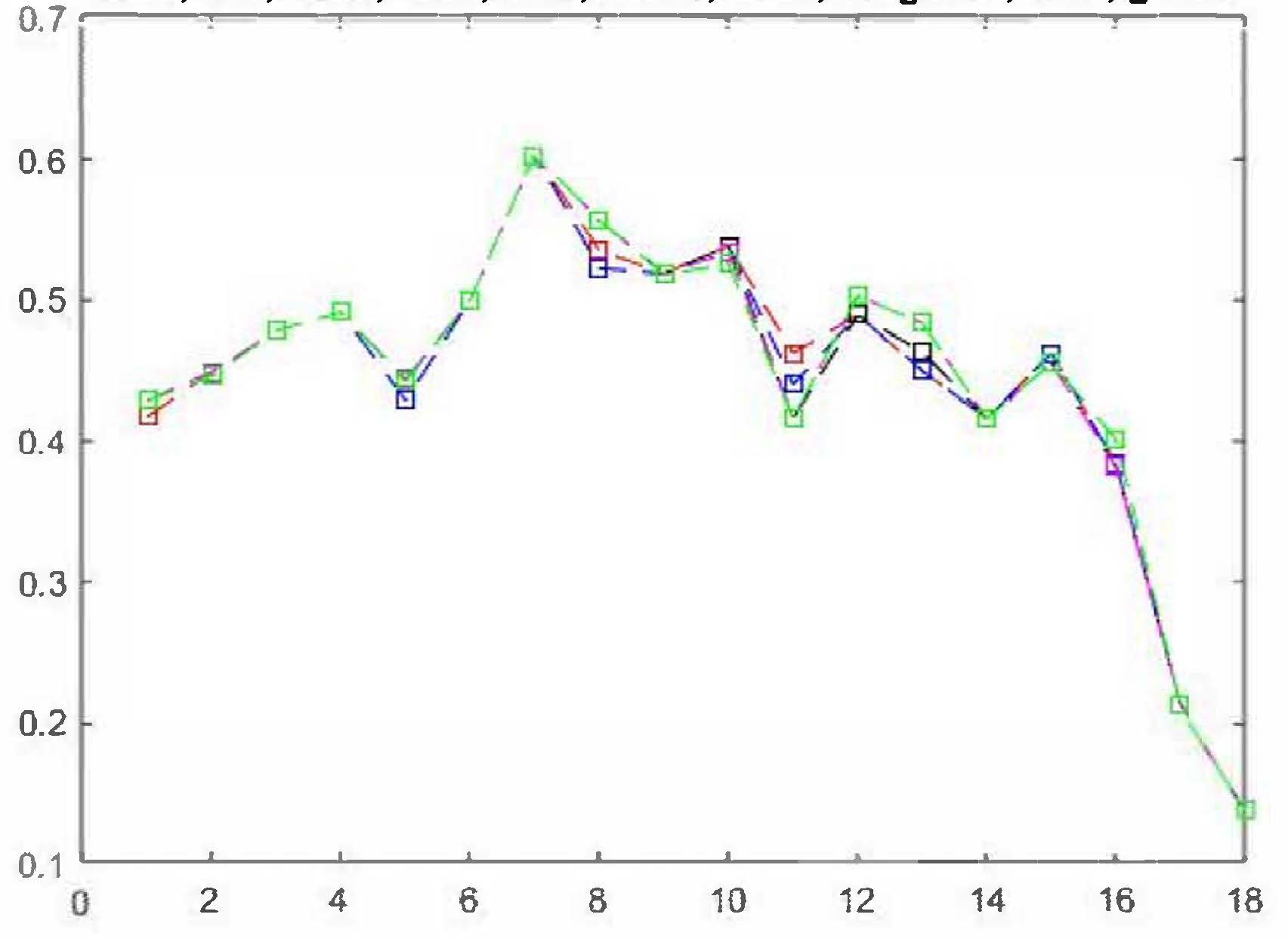}
\includegraphics[width=0.48\textwidth]{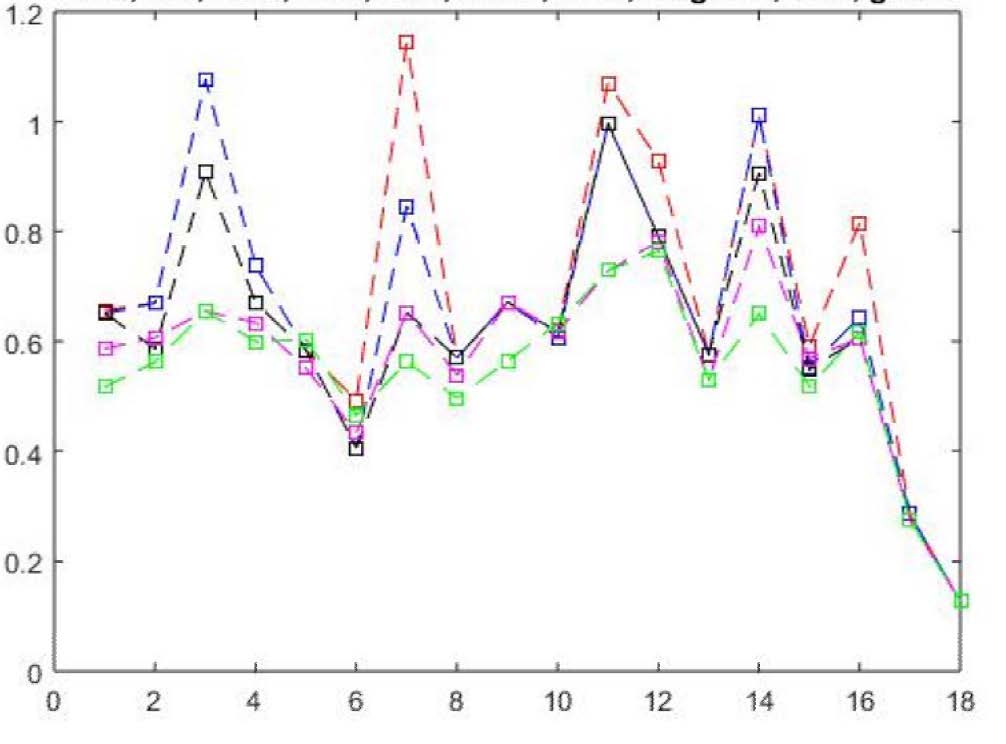}
\includegraphics[width=0.48\textwidth]{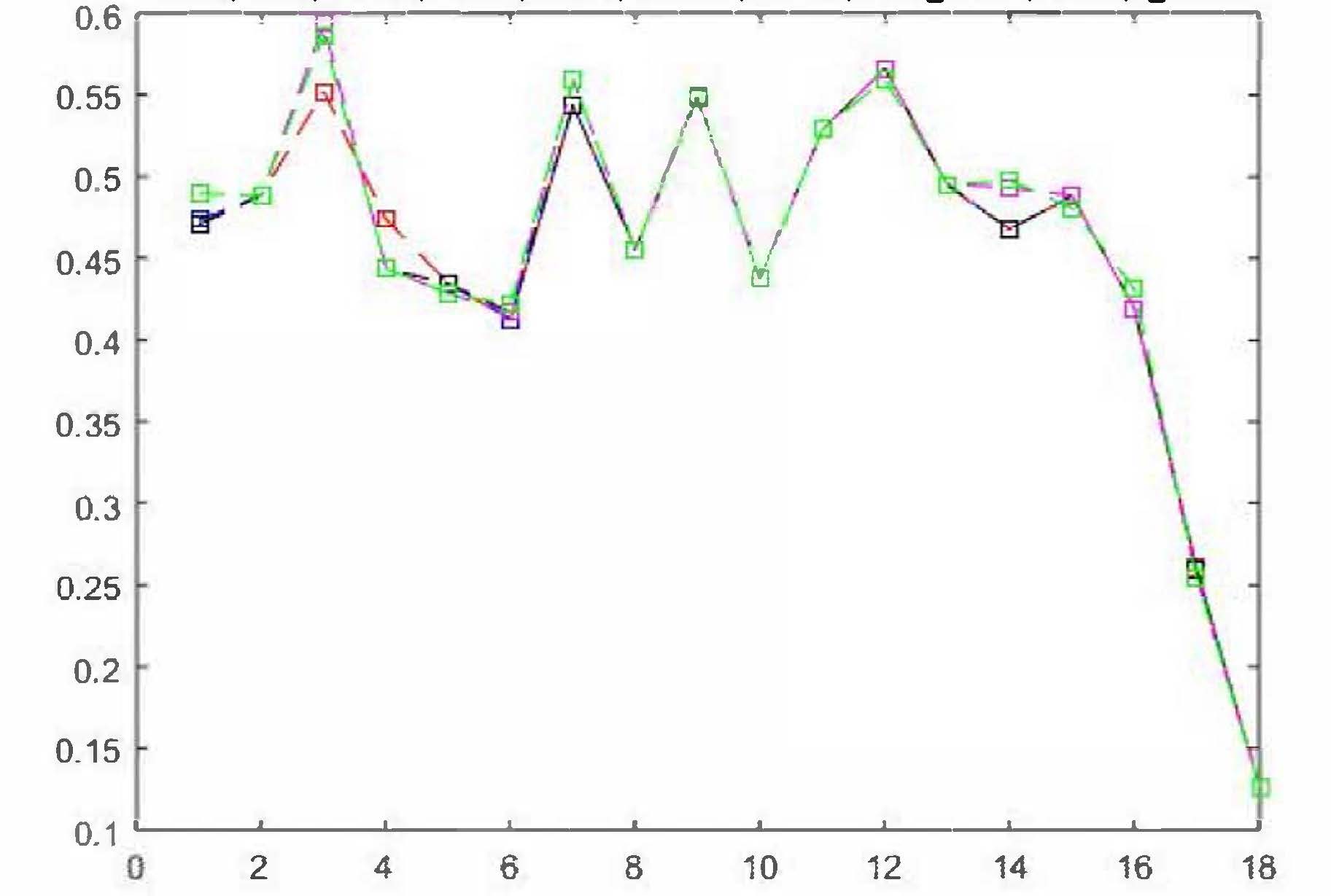}
\includegraphics[width=0.48\textwidth]{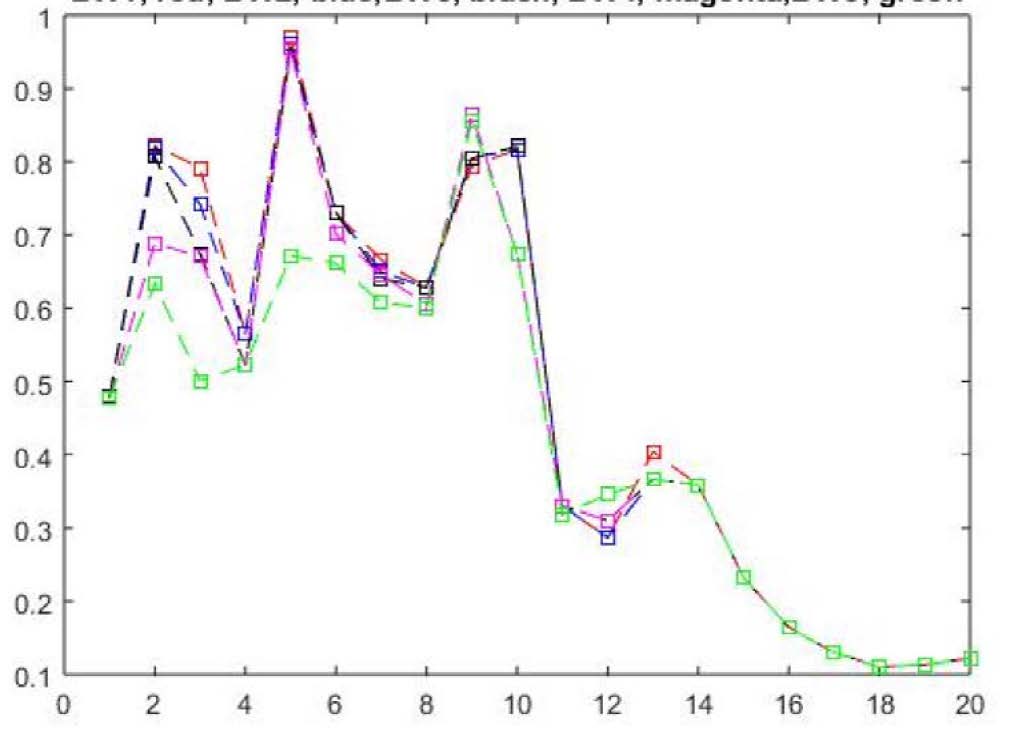}
\includegraphics[width=0.48\textwidth]{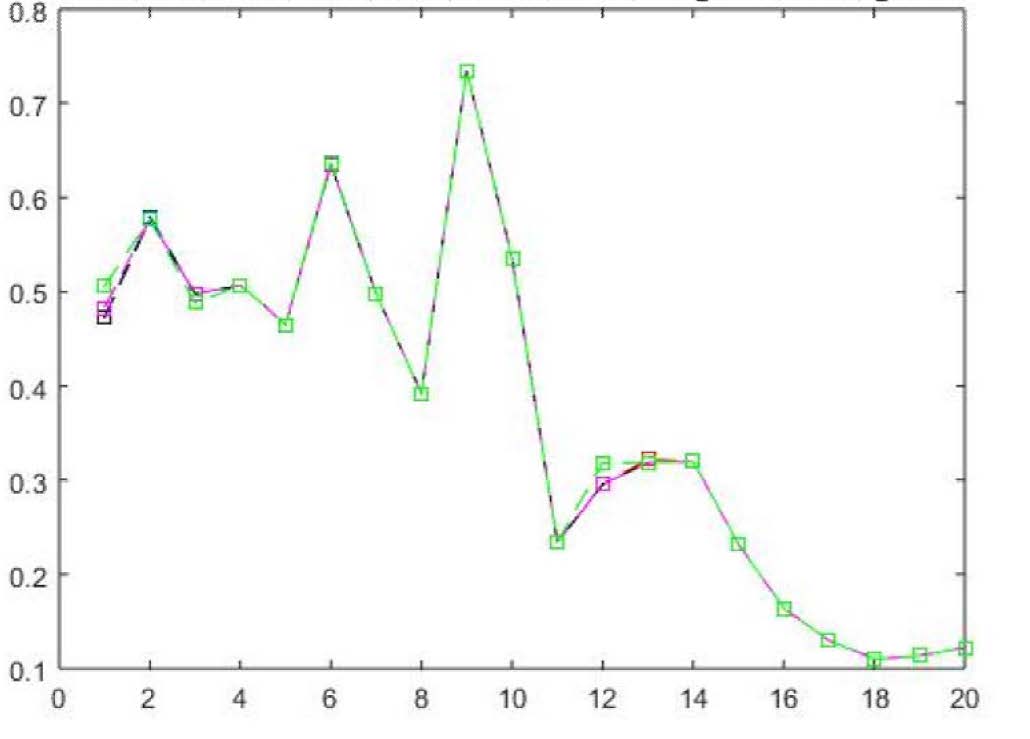}
\end{center}
\caption{ \emph{Intrinsic local linear and NW type local    Fr\'echet curve  regression predictors}. Empirical mean inter-curve variability of the 5-fold cross-validation geodesic  absolute  curve errors for local linear Fr\'echet curve predictor (left-hand side) and NW-type  local curve predictor (right-hand side), for the months  February 1980, March 1980 and April 1980 from top to bottom. Bandwidth parameter tested $BW1= 0.2000$ (red),  $ BW2= 0.2250$ (blue)  $BW3=0.2500$ (black)   $BW4=0.2750$ (magenta)   $BW5= 0.3000$ (green)}
\label{Fig:2.8}
\end{figure}


\begin{thebibliography}{00}
\bibitem[Afsari  (2011)]{Afsari11}
	Afsari, B.  (2011).
Riemannian Lp center of mass: existence, uniqueness, and convexity.
\textit{Proc Amer Math Soc} 139:655--673.

\bibitem[Afsari, Tron and Vidal (2013)]{Afsari13}
	Afsari, B., Tron, R. and Vidal, R.  (2013).  On the convergence of gradient descent for finding the Riemannian center of mass.
\newblock {\em SIAM J Control Optimization}, 51:2230--2260.

\bibitem[Baíllo and Gran\'e (2009)]{Baillo.09}
	Baíllo, A. and Grané, A. (2009).
Local linear regression for functional predictor and scalar response.
 \textit{Journal of Multivariate Analysis} 100(1):102--111.

\bibitem[Barrientos-Marina, Ferratyb  and Vieu (2010)]{Barrientos2010}
Barrientos-Marin, J., Ferraty,  F.  and Vieu, P.  (2010). Locally modelled regression and functional data.
\textit{Journal of Nonparametric Statistics}  22(5):617--632.



\bibitem[Bhattacharya and  Patrangenaru (2003)]{Bhattacharya03}
Bhattacharya, R.  and  Patrangenaru, V. (2003).
 Large sample theory of intrinsic and extrinsic
sample means on manifolds.  \textit{Ann Statist}  31:1--29.

\bibitem[Benhenni, Ferraty, Rachdi and Vieu (2007)]{Benhenni.07}
	Benhenni, K., Ferraty, F., Rachdi, M. and Vieu, P.  (2007).
Local smoothing regression with functional data.
\textit{Computational Statistics}  22(3):353--369.


\bibitem[Benhenni, Hedli-Griche and Rachdi  (2017)]{Benhenni.17}
	Benhenni, K., Hedli-Griche, S. and Rachdi, M. (2017).
 Regression models with correlated errors based on functional random design.
 \textit{TEST}  26(1):1--21.


\bibitem[Benhenni, Hassan and Su (2019)]{Benhenni.19}
	Benhenni, K., Hassan, A. H. and Su, Y. (2019).
 Local polynomial estimation of regression operators from functional data with correlated errors.
 \textit{Journal of Multivariate Analysis} 170:80--94.


\bibitem[Berlinet, A., Elamine, A. and Mas, A. (2011)]{Berlinet.11}
	Berlinet, A., Elamine, A. and Mas, A. (2011).
 Local linear regression for functional data.
 \textit{Annals of the Institute of Statistical Mathematics} 63(5): 1047--1075.


\bibitem[Bhattacharya and Bhattacharya (2012)]{Bhattacharya.12}
Bhattacharya, A. and Bhattacharya, R. (2012).
\newblock \textit{Nonparametric inference on manifolds: With applications to shape
  spaces}. Cambridge University Press, New York.




\bibitem[Bosq (2000)]{Bosq.00}
Bosq, D. (2000).
\newblock {\em {Linear processes in function spaces: Theory and applications}}.
  Volume 149.  {\em Lecture Notes in Statistics}.
\newblock {Springer-Verlag, New York}.


\bibitem[Dai and M\"uller (2018)]{Dai.18}
Dai, X., and M\"uller, H.-G. (2018).
Principal component analysis for functional data on Riemannian
  manifolds and spheres.  \textit{The Annals of Statistics} 46(6B):3334--3361.


\bibitem[Di~Marzio, Panzera and Taylor (2014)]{Marzio.14}
Di~Marzio, M., Panzera, A. and Taylor, C.~C. (2014).
Nonparametric regression for spherical data. \textit{Journal of the American Statistical Association}   109:748--763.


\bibitem[Gin\'e (1975)]{Gine75}
Gin\'e, M. (1975). The addition formula for the eigenfunctions of the Laplacian. \textit{Advances in Mathematics} 18:102--107.


\bibitem[Khardani and   Yao (2022)]{Khardani22}
 Khardani, S. and   Yao, A.F. (2022). Nonparametric recursive regression estimation on
Riemannian manifolds. \textit{\em Statistics and Probability Letters} 82:109274.

\bibitem[Kim   and Park (2013)]{KimPark13}
Kim, Y. T.   and Park, H. S. (2013). Geometric structures arising from kernel density estimation on Riemannian manifolds.
\textit{Journal of Multivariate Analysis} 114:112--126.

\bibitem[Le and  Barden (2014)]{LeBarden17}
 Le, H. and  Barden, D. (2014).
\newblock On the measure of the cut locus of a Fr\'echet mean.
\newblock {\em Bull  London Math Soc} 46:698--70.

\bibitem[Lin,  Thomas,  Zhu and  Dunson (2017)]{Linthomaszhu17}
Lin, L.,  Thomas, B.,  Zhu, H. and  Dunson, D. B. (2017).  Extrinsic local regression on manifold-valued data.
\textit{J Am Stat Assoc}  112:1261--1273.

  \bibitem[Marinucci and  Peccati (2011)]{Marinucci11}
Marinucci, D. and  Peccati, G.  (2011).
 \newblock \emph{Random ﬁelds on the Sphere. Representation, Limit Theorems and Cosmological Applications}.
\newblock {Mathematical Society Lecture Note Series 389. Cambridge University Press, London}.

\bibitem[Ma and   Malyarenko (2020)]{MaMalyarenko}
  Ma, C. and   Malyarenko, A. (2020).  Time varying isotropic vector random fields on compact two points homogeneous spaces.
\textit{Journal of Theoretical Probability} 33:319-339.


\bibitem[Patrangenaru and Ellingson (2016)]{Patrangenaru.16}
Patrangenaru, V. and Ellingson, L. (2016).
\newblock \emph{Nonparametric statistics on manifolds and their applications to
  object data analysis}.
\newblock {Taylor \& Francis Group, LLC, Boca Raton}.

\bibitem[Pelletier  (2005)]{Pelletier05}
Pelletier, B.  (2005).  Kernel density estimation on Riemannian manifolds.
\textit{Statistics \& Probability Letters}  73:297--304.

\bibitem[Pelletier (2006)]{Pelletier06}
Pelletier, B. (2006).  Non-parametric regression estimation on closed Riemannian manifolds.
\textit{J Nonparametr Stat} 18: 57--67.


\bibitem[Petersen and M\"uller (2019)]{Petersen.19}
Petersen, A. and M\"uller, H.-G. (2019). Fr\'echet regression for random objects with Euclidean predictors.
\textit{The Annals of Statistics}  49:691--719.


\bibitem[Torres-Signes,  Frías and Ruiz-Medina (2021)]{Torres.21}
Torres-Signes, A., Frías, M.~P. and Ruiz-Medina, M.~D. (2021).
\newblock {COVID--19 mortality analysis from soft-data multivariate curve
  regression and machine learning}.
\newblock {\em {Stochastic Environmental Research and Risk Assessment}}.
  35:2659--2678.

  \bibitem[Torres-Signes, Frías and Ruiz-Medina  (2025)]{Torresetal2024}
	Torres-Signes, A., Frías, M. P. and Ruiz-Medina, M. D.  (2025).  Multivariate manifold-valued curve regression in time.
 \textit{Statistical Papers}   66:73. doi.org/10.1007/s00362-025-01684-z.



\bibitem[Zhu,  Chen, Ibrahim,  Li,  Hall  and  Lin (2009)]{Zhu09}
 Zhu, H.,  Chen, Y. , Ibrahim, J. G. ,  Li, Y.,  Hall, C.  and  Lin, W. (2009).
Intrinsic regression models for positive-definite matrices with
applications to diffusion tensor imaging.
\textit{J Am Stat Assoc} 104:1203--1212.
\end{thebibliography}
\end{document}